\newtheorem{theorem}{Theorem}
\newtheorem{lemma}{Lemma}
\newtheorem{proposition}{Proposition}
\newenvironment{proof}[1][Proof]{\textbf{#1.} }{\ \rule{0.5em}{0.5em}}
\newcommand{\Var}{\text{Var}}
\newcommand{\Cov}{\text{Cov}}
\def\ds{\displaystyle}
\begin{document}

\title{Sharp minimax tests for large Toeplitz covariance matrices with repeated observations}

\author{Cristina Butucea$^{1,2}$, Rania Zgheib$^{1,2}$ \\
$^1$ Universit\'e Paris-Est Marne-la-Vall\'ee, \\
LAMA(UMR 8050), UPEMLV 
F-77454, Marne-la-Vall\'ee, France\\
$^2$ ENSAE-CREST-GENES \\
3, ave. P. Larousse
92245 MALAKOFF Cedex, FRANCE
}

\maketitle

\begin{abstract}
We observe a sample of $n$ independent $p$-dimensional  Gaussian vectors with Toeplitz covariance matrix $ \Sigma = [\sigma_{|i-j|}]_{1 \leq i,j \leq p}$ and $\sigma_0=1$.
We consider the problem of testing the hypothesis that $\Sigma$  is the identity matrix asymptotically when $n \to \infty$ and $p \to \infty$. We suppose that the covariances $\sigma_k$ decrease  either polynomially ($\sum_{k \geq 1} k^{2\alpha} \sigma^2_{k} \leq L$ for $ \alpha >1/4$ and $L>0$) or exponentially ($\sum_{k \geq 1} e^{2Ak} \sigma^2_{k} \leq L$ for $ A,L>0$).

We consider a test procedure based on a weighted U-statistic of order 2, with optimal weights chosen as solution  of an   extremal problem. We give the asymptotic normality of the test statistic under the null hypothesis for fixed $n$ and $p \to + \infty$ and the asymptotic behavior of the type I error probability of our test procedure. We also show that the maximal type II error probability, either tend to $0$, or is bounded from above. In the latter case, the upper  bound is given  using the asymptotic normality of our test statistic under alternatives close to the separation boundary. Our assumptions imply mild conditions:  $n=o(p^{2\alpha - 1/2})$ (in the polynomial case), $n=o(e^p)$ (in the exponential case).

We prove both rate optimality and sharp optimality of our results,  for $\alpha >1$ in the polynomial case and for any $A>0$ in the exponential case. 

A simulation study illustrates the good behavior of our procedure, in particular for small $n$, large $p$.
\end{abstract}

\noindent {\bf Key Words:} Toeplitz matrix, covariance matrix, high-dimensional data, U-statistic, minimax hypothesis testing, optimal separation rates, sharp asymptotic rates.

\noindent {\bf MSC 2000:}  62G10,  62H15,  62G20, 62H10



\section{Introduction}\label{introduction}

In the last decade, both functional data analysis (FDA) and high-dimensional (HD) problems have known an unprecedented expansion both from a theoretical point of view (as they offer many mathematical challenges) and for the applications (where data have complex structure and grow larger every day). Therefore, both areas share a large number of trends, see \cite{iwfos14} and the review by \cite{Cuevas14}, like regression models with functional or large-dimensional covariates, supervised or unsupervised classification, testing procedures, covariance operators.

Functional data analysis proceeds very often by discretizing curve datasets in time domain or by projecting on suitable orthonormal systems and produces large dimensional vectors with size possibly larger than the sample size. Hence methods and techniques  from HD problems can be successfully implemented (see e.g.~\cite{AneirosVieu14}).However,  in some cases, HD vectors can be transformed into stochastic processes, see \cite{ChenChen11}, and then techniques from FDA bring new insights into HD problems.
Our work is of the former type.


We observe  independent, identically distributed Gaussian vectors $X_1,...,X_n$, $n \geq 2$, which are $p$-dimensional, centered and with a positive definite Toeplitz covariance matrix $\Sigma$. We denote by $X_k = (X_{k,1},...,X_{k,p}) ^\top$ the coordinates of the vector $X_k$ in $\mathbb{R}^p$  for all $k$.

Our model is that of a stationary Gaussian time series, repeatedly and independently observed  $n$ times, for $n\geq 2$. We assume that $n$ and $p$ are large. In functional data analysis, it is quite often that curves are observed in an independent way: electrocardiograms of different patients, power supply for different households and so on, see other data sets in \cite{iwfos14}. After modelisation of the discretized curves, the statistician will study the normality and the whiteness of the residuals in order to validate the model. Our problem is to test from independent samples of high-dimensional residual vectors that the standardized Gaussian coordinates are uncorrelated.

Let us denote by $\sigma_{|j|} = Cov(X_{k,h}, X_{k,h+j})$, for all integer numbers $h$ and $j$, for all $k\in \mathbb{N}^*$, where $\mathbb{N}^*$ is the set of positive integers. We assume that $\sigma_0 = 1$, therefore $\sigma_j$ are correlation coefficients.
We recall that $\{\sigma_j\}_{j \in \mathbb{N}}$ is a sequence of non-negative type, or, equivalently, the associated Toeplitz matrix $\Sigma$ is non-negative definite.
We assume that the sequence $\{\sigma_j\}_{j \in \mathbb{N}}$ belongs to to $\ell_1 (\mathbb{N}) \cap \ell_2 (\mathbb{N}) $, where  $\ell_1 (\mathbb{N})$ (resp. $\ell_2 (\mathbb{N}) $) is the set all absolutely (resp. square) summable sequences. It is therefore possible to construct a positive, periodic function
$$
f(x) = \frac 1{2 \pi} \left(1 + 2\sum_{j = 1}^{\infty} \sigma_j \cos ( jx) \right), \quad
\mbox{ for } x \in (-\pi,\pi),
$$
belonging to $\mathbb{L}_2(-\pi, \pi) $ the set of all square-integrable functions $f$  over $ (-\pi,\pi)$ . This function is known as the spectral density of the stationary series $\{X_{k,i}, i\in \mathbb{Z}\}$.

We solve the following test problem,
\begin{equation}
\label{H_0}
H_0 : \Sigma = I
 \end{equation}
versus the alternative
\begin{equation}\label{H1}
H_1 : \Sigma \in \mathcal{T}(\alpha, L) \text{ such that } \ds \sum_{j \geq 1} \sigma_j^2  \geq \psi^2,
\end{equation}
for $\psi =(\psi_{n,p})_{n,p}$ a positive sequence converging to 0. From now on, $C_{>0}$ denotes the set of squared symmetric and positive definite matrices.
The set $\mathcal{T}(\alpha, L)$ is an ellipsoid of Sobolev type
$$
  \mathcal{T}(\alpha, L) = \{ \Sigma \in C_{>0} , \Sigma \text{ is Toeplitz } ; \ds\sum_{j \geq 1} \sigma_j^2 j^{2\alpha} \leq L  \text{ and } \sigma_0 = 1 \}, \,\alpha >1/4, \, L>0.
$$
We shall also test (\ref{H_0}) against
\begin{equation}\label{H1E}
H_1 : \Sigma \in \mathcal{E}(A, L) \text{ such that } \ds \sum_{j \geq 1} \sigma_j^2  \geq \psi^2, \mbox{ for }\psi >0,
\end{equation}
where the ellipsoid of covariance matrices is given by
$$
  \mathcal{E}(A, L) = \{ \Sigma \in C_{>0} , \Sigma \text{ is Toeplitz } ; \ds\sum_{j \geq 1} \sigma_j^2 e^{2Aj} \leq L  \text{ and } \sigma_0 = 1 \}, A,L>0.
$$
This class contains the covariance matrices whose elements decrease exponentially, when moving away from the diagonal.
We denote by $G(\psi )$ either $ G ( \mathcal{T}(\alpha, L) , \psi) $ the set of matrices under the alternative (\ref{H1}) or $ G ( \mathcal{E}(A, L) , \psi) $ under the alternative (\ref{H1E}).

We stress the fact that a matrix $\Sigma$ in $G(\psi)$ is such that $1/(2p) \|\Sigma - I\|_F^2 \geq \sum_{j \geq 1} \sigma_j^2 \geq \psi^2$, i.e. $\Sigma$ is outside a neighborhood of $I$ with radius  $\psi$ in Frobenius norm. 

\smallskip

Our test can be applied in the context of model fitting for testing the whiteness of the standard Gaussian residuals. In this context, it is natural to assume that the covariance matrix under the alternative hypothesis has small entries like in our classes of covariance matrices. Such tests have been proposed by \cite{FisherGallagher12}, where it is noted that weighted test statistics can be more powerful.

Note that, most of the literature on testing the null hypothesis \eqref{H_0}, either focus on finding the asymptotic behavior of the test statistic under the null hypothesis, or control in addition  the type II error probability  for one fixed unknown matrix under the alternative, whereas our main interest is to quantify the worst type II error probabilities, i.e. uniformly over a large set of possible covariance matrices.

Various test statistics in high dimensional settings have been considered for testing \eqref{H_0}, as it was known for some time that likelihood ratio tests do not converge when dimension grows. Therefore,  a corrected   Likelihood Ratio Test is proposed in \cite{Bai2009} when  $p/n \to c \in (0,1)$, and its asymptotic behavior  is given under the null hypothesis, based on the random matrix theory. In \cite{JiangJiangYang12}  the result is extended to $c=1$. An exact test based on one column of the covariance matrix is constructed by \cite{GuptaBodnar14}.
A series of papers propose  test statistics based on the Frobenius norm of $ \Sigma -I$,   see  \cite{LedoitWolf02},  \cite{Srivastava2005},  \cite{Srivastava2014} and \cite{ChenZZ10}. Different test statistics are introduced and their asymptotic distribution is studied. In particular in \cite{ChenZZ10} the test statistic is a U-statistic with constant weights.   An unbiased estimator of $tr(\Sigma - B_k(\Sigma))^2$ is constructed in  \cite{QiuChen12}, where $B_k(\Sigma) =(\sigma_{ij} \cdot I\{|i-j| \leq k\})$, in order to develop a test statistic for the problem of testing the bandedness of a given  matrix.
Another extension of our test problem is to test  the sphericity hypothesis $\Sigma= \sigma^2 I$, where $\sigma^2 >0$ is unknown.  \cite{FisherSunGallagher2010} introduced a test statistic based on functionals of order 4 of the covariance matrix.
Motivated by these results, the test $H_0: \Sigma = I$ is revisited by \cite{Fisher12}.
The maximum value of non-diagonal elements of the empirical covariance matrix was also investigated as a test statistic. Its asymptotic extreme-value distribution was given under the identity covariance matrix by \cite{CaiJiang11} and for other covariance matrices by \cite{XiaoWu13}.
We propose here a new test statistic to test \eqref{H_0} which is a weighted U-statistic of order 2 and study its probability errors uniformly over the set of matrices given by the alternative hypothesis.


The test problem with alternative (\ref{H1}) and with one sample ($n=1$) was solved in the sharp asymptotic framework, as $p \to \infty$, by \cite{Ermakov94}. Indeed, \cite{Ermakov94} studies sharp minimax testing of the spectral density $f$ of the Gaussian process.
Note that under the null hypothesis we have a constant spectral density $f_0(x) = 1/(2\pi)$ for all $x$ and the alternative can be described in $\mathbb{L}_2$ norm as
we have the following isometry $\|f - f_0\|_2^2 = (2\pi)^{-1} \|\Sigma - I\|_F^2$. Moreover, the ellipsoid of covariance matrices $\mathcal{T} (\alpha, L)$ are in bijection with Sobolev ellipsoids of spectral densities $f$. Let us also recall that the adaptive rates for minimax testing are obtained for the spectral density problem by \cite{GolubevNussbaumZhou10} by a non constructive method using the asymptotic equivalence with a Gaussian white noise model. Finding explicit test procedures which adapt automatically to parameters $\alpha$ and/or $L$ of our class of matrices will be the object of future work. Our efforts go here into finding sharp minimax rates for testing.

Our results generalize the results in \cite{Ermakov94} to the case of repeatedly observed stationary Gaussian process.
We stress the fact that repeated sampling of the stationary process $(X_{1,1},...,X_{1,p})$ to $(X_{n,1},...,X_{n,p})$ can be viewed as one sample of size $n \times p$ under the null hypothesis. However, this sample will not fit the assumptions of our alternative. Indeed, under the alternative, its covariance matrix is not Toeplitz, but block diagonal. 
Moreover, we can summarize the $n$ independent vectors into one $p$-dimensional vector $X = n^{-1/2} \sum_{k=1}^n X_k$ having Gaussian distribution $\mathcal{N}_p(0, \Sigma)$. The results by  \cite{Ermakov94} will produce a test procedure with rate that we expect optimal as a function of $p$, but more biased and suboptimal as a function of $n$. The test statistic that we suggest removes cross-terms and has smaller bias.
Therefore, results in \cite{Ermakov94} do not apply in a straightforward way to our setup.

A conjecture in the sense of asymptotic equivalence of the model of repeatedly observed Gaussian vectors and a Gaussian white noise model was given by \cite{CaiRenZhou13}. Our rates go in the sense of the conjecture. 

\smallskip

The test of $H_0:\Sigma=I$ against (\ref{H1}), with $\Sigma$  not  necessary Toeplitz, is given in \cite{ButuceaZgheib2014A}. Their rates
show a loss of a factor $p$ when compared to the rates for Toeplitz matrices obtained here.
 This can be interpreted  heuristically by the size of the set of unknown parameters which is $p(p-1)/2$ for \cite{ButuceaZgheib2014A} whereas here it is $p$.
%
We can see that the family of Toeplitz matrices is a subfamily of general covariance matrices in \cite{ButuceaZgheib2014A}. Therefore, the lower bounds are different, they are attained through a particular family of Toeplitz large covariance matrices. The upper bounds take into account as well the fact that we have repeated information on the same diagonal elements. The test statistic is different from the one used in \cite{ButuceaZgheib2014A}.

\smallskip

The test problem with alternative hypothesis (\ref{H1E}) has not been studied in this model.
The class $\mathcal{E}(A,L)$ contains matrices with exponentially decaying elements when further from the main diagonal.
The spectral density function associated to this process belongs to the class of functions which are in $\mathbb{L}_2$ and admit an analytic continuation on the strip of complex numbers $z$ with $ |Im(z)| \leq A$. Such classes of analytic functions are very popular in the literature of minimax estimation, see \cite{GolubevLevitTsybakov96} .


In times series analysis such covariance matrices describe among others the linear ARMA processes.
%
The problem of adaptive estimation of the spectral density of an ARMA process has been studied by \cite{Golubev1993} (for known $\alpha$) and adaptively to $\alpha$ via wavelet based methods by \cite{Neumann1996} and by model selection by \cite{Comte2001}.
In the case of an ARFIMA process, obtained by fractional differentiation of order $d \in (-1/2,1/2)$ of a casual invertible ARMA process,
\cite{Soulier2000} gave adaptive estimators of the spectral density based on the log-periodogram regression model when the covariance matrix belongs to $\mathcal{E}(A,L)$.

\smallskip

Before describing our results let us define more precisely the quantities we are interested in evaluating.

\smallskip

\subsection{Formalism of the minimax theory of testing}
Let $ \chi $ be a test, that is a measurable function of the observations $X_1, \dots, X_n$ taking values in $\{0,1 \}$ and recall that $G(\psi)$ corresponds to the set of covariance matrice under the alternative hypothesis. Let
\begin{eqnarray*}
\eta (\chi) &=& \mathbb{E}_I (\chi) \quad \text{ be its  type I error probability, and} \\
\beta (\chi \, ,  G(\psi))  &=& \sup\limits_{\Sigma \in G(\psi)} \mathbb{E}_{\Sigma}( 1 - \chi) \quad \text{be its maximal type II error probability. }
\end{eqnarray*}
We consider two criteria to measure the performance of the test procedure. The first one corresponds to the classical Neyman-Pearson criterion. For $w \in(0,1)$, we define,
\[
\beta_w ( G(\psi)) = \inf\limits_{\chi \, ; \, \eta(\chi) \leq w } \beta (\chi,  G(\psi)).
\]
The test $\chi_{w}$ is asymptotically minimax according to the Neyman-Pearson criterion if
\[
\eta (\chi_{w}) \leq w + o(1) \quad \text{ and} \quad \beta (\chi_{w} \, ,  G(\psi))= \beta_w( G(\psi) ) + o(1).
\]

The second criterion is the total error probability, which is defined as follows:
\[
\gamma ( \, \chi \, ,  G(\psi)) =   \eta (\chi) +  \beta ( \, \chi \, ,  G(\psi)).
\]
Define also the minimax total error probability $\gamma$ as $
\gamma(G(\psi)) = \inf\limits_{\chi} \gamma (\, \chi \, ,  G(\psi))$, 
where the infimum is taken over all possible tests.

Note that the two criteria are related since $ \gamma(G(\psi)) = \inf_{w \in (0,1) } (w + \beta_w ( G(\psi)) )$ (see Ingster and Suslina~\cite{IngsterSuslina03}).
\begin{center}
\begin{table}
$$
\begin{array}{c|ccc}
\Sigma  & \mathcal{T}(\alpha, L)  & \mathcal{E}(A, L) & \mbox{not Toeplitz and } \mathcal{T}(\alpha, L) \text{ \cite{ButuceaZgheib2014A} }\\
 \hline
 \widetilde \psi &
\left( C(\alpha, L)  \cdot  n^2  p^2\, \right)^{-\frac{\alpha }{4\alpha+1}}
&
\left( \ds\frac{2 \ln(n^2 p^2)}{A n^2 p^2} \right)^{1/4}
&
 \left(
 C(\alpha, L) \cdot n^2p \right)^{ - \frac{ \alpha }{4 \alpha +1}}
\\
 b(\psi)^2
&
C(\alpha, L)  \cdot \psi^{\frac{4\alpha + 1}{ \alpha}}
&
\ds\frac{A \psi^4}{2 \ln \Big(\ds\frac{1}{ \psi} \Big)}
&C(\alpha, L)  \cdot \psi^{\frac{4\alpha + 1}{ \alpha}}\\
\end{array}
$$
\caption{Separation rates $\widetilde \psi$ and $b(\psi)$ in the sharp asymptotic bounds}
where  $C(\alpha, L) = (2 \alpha +1)(4\alpha +1)^{-(1 +\frac{1}{2 \alpha }) } L^{-\frac{1}{2\alpha}}$. \label{table}
\end{table}
\end{center}
\vspace{-1 cm}
A test $\chi$ is asymptotically minimax if: $
\gamma (G(\psi))=  \gamma(\, \chi \, ,  G(\psi)) + o(1).$
We say that $ \widetilde{\psi}$ is a (asymptotic) separation rate, if the following lower bounds hold
\[
\gamma (G(\psi)) \longrightarrow 1 \quad \text{ as } \ds\frac{\psi}{\widetilde{\psi}} \longrightarrow  0
\]
together with the following upper bounds: there exists a test $ \chi$ such that,
\[
\gamma ( \, \chi \, ,  G(\psi))  \longrightarrow 0  \quad \text{ as } \ds\frac{\psi}{\widetilde{\psi}} \longrightarrow  + \infty.
\]

The sharp optimality corresponds to the study of the asymptotic behavior of the maximal type II error probability $\beta_w(G(\psi))$ and the total error probability $\gamma(G(\psi))$. In our study we obtain  asymptotic behavior  of Gaussian type, i.e. we   show that, under some assumptions,
\begin{equation}\label{sharp}
\beta_w( G(\psi)) = \Phi ( z_{1-w}  - np b(\psi)) + o(1) \quad \text{and} \quad
\gamma( G(\psi)) = 2 \Phi( -npb(\psi)) + o(1),
\end{equation}
where $\Phi$ is the cumulative distribution function of a standard Gaussian random variable, $z_{1-w} $ is the $1-w$ quantile of the standard Gaussian distribution for any $w \in (0,1)$,  and
 $b(\psi)$ has an explicit form for each ellipsoid of Toeplitz covariance matrices.

Separation rates and sharp asymptotic results for different testing problem were studied under this formalism by \cite{IngsterSpatinas09}. We refer for precise definitions of sharp asymptotic and non asymptotic rates to \cite{MarteauSapatinas14}.
Note that throughout this paper,  asymptotics and  symbols  $o$, $O$, $\sim$ and $\asymp$ are considered as $p$ tends to infinity, unless we specify that $n$ tends to infinity.
Recall that, given sequences of real numbers $u$ and real positive numbers $v$, we say that they are
asymptotically equivalent,  $u \sim v$, if $\lim u/v= 1$. Moreover, we say that the sequences are asymptotically of the same order, $u \asymp v$, if there exist two constants $0<c \leq C < \infty$ such that $c \leq \lim\inf u/v$ and $\limsup u/v \leq C$.
\subsection{Overview of the results}
\label{overviewresults}

In this paper, we describe the separation rates $\widetilde \psi$ and sharp asymptotics for the error probabilities for testing the identity matrix against $G ( \mathcal{T}(\alpha, L) , \psi)$ and $G ( \mathcal{E}(A, L) , \psi)$ respectively.

We propose here a test procedure whose type II error probability tends to 0 uniformly over the set of $G(\psi)$, that is even for a covariance matrix that gets closer to the identity matrix at distance $\widetilde \psi\to 0$ as $n$ and $p$ increase. The radius $\widetilde \psi $ in  Table~\ref{table} is the smallest vicinity around the identity matrix which still allows testing error probabilities to tend to 0. Our test statistic is a weighted quadratic form and we show how to choose these weights in an optimal way over each class of alternative hypotheses.

Under mild assumptions we obtain  the   sharp optimality in (\ref{sharp}),
where $b(\psi)$  is described in Table~\ref{table} and compared to the case of non Toeplitz matrices in \cite{ButuceaZgheib2014A}.

This paper is structured   as follows.
In Section \ref{sec:toeplitz}, we study the test problem with alternative hypothesis defined by the class $G(\mathcal{T}(\alpha, L), \psi)$, $\alpha >1/4$, $L, \, \psi>0$. We define explicitly the test statistic and give its first and second moments under the null and the alternative hypotheses. We derive its Gaussian asymptotic behavior under the null hypothesis and under the alternative submitted to the constraints that $\psi$ is close to the separation rate $\widetilde \psi$ and  that $\Sigma$ is closed to the solution of an extremal problem $\Sigma^*$. We deduce the asymptotic separation rates. Their optimality is shown only for $\alpha >1$. Our lower bounds are original in the literature of minimax lower bounds, as in this case we cannot reduce the proof to the vector case, or diagonal matrices.
We give  the sharp rates for $\psi \asymp \widetilde \psi$. Our assumptions imply that necessarily $n = o(p^{2\alpha - 1/2})$ as $p \to \infty$. That does not prevent $n$ to be larger than $p$ for sufficiently large $\alpha$.

In Section \ref{sec:testprob2}, we derive analogous results over the class $G(\mathcal{E}(A, L), \psi)$, with $A,\, L, \psi >0$. We show how to choose the parameters in this case and study the test procedure similarly. We give asymptotic separation rates. The sharp bounds are attained as $\psi \asymp \widetilde \psi$. Our assumptions involve that $n = o(\exp(p))$ which allows $n$ to grow exponentially fast with $p$. That can be explained by the fact that the elements of $\Sigma$ decay much faster over exponential ellipsoids than over the polynomial ones.
In Section~\ref{sec:simu} we implement our procedure and show the power of testing over two families of covariance matrices. 

The proofs of our results are postponed to the Section \ref{sec:proofs} and to the Supplementary material. 


\section{Testing procedure and results for polynomially decreasing covariances}
\label{sec:toeplitz}

  We introduce a weighted U-statistic of order 2, which is an estimator of the functional $\sum_{j \geq 1} \sigma_j^2$ that defines the separation between a Toeplitz covariance matrix under the alternative hypothesis from  the identity matrix under the null. Indeed, in nonparametric estimation of quadratic functionals such as $\sum_{j \geq 1} \sigma_j^2$ weighted estimators are often considered (see e.g.  \cite{ButuceaMeziani11}). These weights have finite support of length $T$, where $T$ is optimal in some sense. Intuitively, as the coefficients $\{\sigma_j\}_j$ belong to an ellipsoid, they become smaller when $j$ increases and thus the bias due to the truncation and the weights becomes as small as the variance for estimating the weighted finite sum.

 \subsection{Test Statistic }
Let us denote by $T_p( \{ \sigma_j \}_{j \geq 1}) $ the symmetric $p \times p$ Toeplitz matrix $\Sigma = [\sigma_{lk}]_{1 \leq l, k \leq p}$ such that the diagonal elements of $ \Sigma$ are equal to 1, and $   \sigma_{lk} = \sigma_{kl} = \sigma_{|l-k|}$, for all $l \ne k$.
 Now we define the weighted test statistic in this setup
\begin{equation}
\label{esttetplitz}
\widehat{\mathcal{A}}_n: = \widehat{\mathcal{A}}_n^{\mathcal{T}} = \ds\frac{1}{n(n-1)(p-T)^2} \underset{1 \leq k \neq l \leq n}{\ds\sum }\sum_{j=1}^T w_j^* \underset{T+1 \leq i_1, i_2 \leq p}{\ds\sum } X_{k, i_1}X_{k, i_1-j}X_{l, i_2}X_{l, i_2-j}
\end{equation}
where the weights $\{ w_j^* \}_j $ and the parameters $T, \lambda, b^2(\psi)$ are obtained by solving the following extremal problem:
\begin{equation}
\label{probopt}
b(\psi) :=\sum_{j \geq 1} w_j^* \sigma_j^{*2} =  \sup\limits_{ \left\{\substack{ (w_{j})_{j} ~~:~~ w_{j} \geq 0 ;\\ \\
  \sum_{j \geq 1}  w_{j}^2 =\frac{1}{2}} \right\} }
  \inf\limits_{ \left\{ \substack{ \Sigma ~:~ \Sigma=T_p (\{\sigma_{j} \}_{j \geq 1} ) ; \\ \\
 \Sigma \in \mathcal{T}(\alpha,L) ,~~ \sum_{j \geq 1} \sigma_j^2 \geq \psi^2 }\right\} }
  \sum_{j\geq 1} w_{j} \sigma_{j}^2.
\end{equation}
This extremal problem appears heuristically as we want that the expected value of our test statistic for the worst parameter $\Sigma$ under the alternative hypothesis (closest to the null) to be as large as possible for the weights we use. This problem will provide the optimal weights $\{w_j^*\}_{j \geq 1}$ in order to control the worst type II error probability, but also the critical matrix $\Sigma^* = T_p(\{\sigma^*_j\})$ that will be used in the lower bounds. Indeed, $\Sigma^*$ is positive definite for small enough $\psi$ (see \cite{ButuceaZgheib2014A}).

The solution of the extremal problem (\ref{probopt}) can be found in  \cite{IngsterSuslina03}:
\begin{equation}\label{parameters1}
\begin{array}{lcl}
w_j^* &=& \ds\frac{\lambda}{2b (\psi)} \Big( 1 - (\frac{j}{T})^{2 \alpha} \Big) , \quad \sigma_j^{*2}= \lambda \Big( 1 - (\frac{j}{T})^{2 \alpha} \Big), \quad  T = \lfloor (L(4 \alpha +1))^{\frac{1}{2 \alpha}} \cdot \psi^{- \frac{1}{\alpha}} \rfloor  \\  \\
 \lambda &=&  \ds\frac{2 \alpha + 1}{2 \alpha  (L(4\alpha +1))^{\frac{1}{2 \alpha}}}\cdot \psi^{\frac{2\alpha + 1}{ \alpha}} , \quad
b^{2}(\psi)  =\frac{1}{2}\sum_{j} \sigma_j^{*4}= \ds\frac{2 \alpha +1}{L^{\frac{1}{2\alpha}}(4\alpha +1)^{1 +\frac{1}{2 \alpha } }} \cdot \psi^{\frac{4\alpha + 1}{ \alpha}}
\end{array}
\end{equation}
Remark that $T$ is a finite number but grows to infinity as $\psi \to 0$. Moreover, the test statistic will have optimality properties under the additional condition that $T/p \to 0$ which is equivalent to $p \psi^{1/\alpha} \to \infty$. It is obvious that in practice it might happen that $T \geq p$ and then we have no solution but to use $T=p-1$, with the inconvenient that the procedure does not behave as well as the theory predicts.
\begin{proposition}
\label{prop:esttoeplitz}
Under the null hypothesis, the test statistic $\widehat{\mathcal{A}}_n $ is centered, $\mathbb{E}_I(\widehat{\mathcal{A}}_n) =0$,  with variance :
\begin{equation*}
\Var_I (\widehat{\mathcal{A}}_n)  = \ds\frac{1}{n(n-1)(p-T)^2} .
\end{equation*}

Moreover, under the alternative hypothesis with  $ \alpha > 1/4$, if we assume that $\psi \to 0$ we have:
\[
\mathbb{E}_{\Sigma}(\widehat{\mathcal{A}}_n) = \ds\sum_{j=1}^T w^*_j \sigma_j^2  \geq b(\psi) \quad  \text{ and } \quad \Var_{\Sigma}(\widehat{\mathcal{A}}_n) = \ds\frac{R_1}{n(n-1)(p-T)^4} + \ds\frac{R_2}{n(p-T)^2},
\]
uniformly over $\Sigma$ in $ G(\mathcal{T}(\alpha, L), \psi) $, where
\begin{eqnarray}
\label{R_1}
R_1 &\leq & (p-T)^2 \cdot \{ 1 + o(1) +  \mathbb{E}_\Sigma(\widehat{\mathcal{A}}_n) \cdot (O(\ds\sqrt{T}) + O( T^{3/2 - 2 \alpha})) + \mathbb{E}^2_\Sigma(\widehat{\mathcal{A}}_n) \cdot O(T^2)  \}\\
\label{R_2}
R_2 &\leq & \!\!\! (p-T) \cdot \{\mathbb{E}_{\Sigma} (\widehat{\mathcal{A}}_n) \cdot o(1)  +  \mathbb{E}^{3/2}_{\Sigma}(\widehat{\mathcal{A}}_n ) \cdot (O(T^{1/4})+ O(T^{3/4- \alpha})) +\mathbb{E}^2_{\Sigma}(\widehat{\mathcal{A}}_n) \cdot O(T)  \}.
\end{eqnarray}
\end{proposition}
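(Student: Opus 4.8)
The plan is to treat $\widehat{\mathcal A}_n$ as a degree-two U-statistic in the sample index and to reduce every moment to a fourth-order Gaussian moment via Isserlis' (Wick's) formula. For each sample $k$ and lag $1\le j\le T$ set
\[
S_k^{(j)} := \sum_{i=T+1}^p X_{k,i}X_{k,i-j},
\]
so that $\widehat{\mathcal A}_n = (p-T)^{-2}\binom n2^{-1}\sum_{k<l}h(X_k,X_l)$ with the symmetric kernel $h(X_k,X_l)=\sum_{j=1}^T w_j^* S_k^{(j)}S_l^{(j)}$. Since the samples are i.i.d., for $k\ne l$ the two factors are independent and $\mathbb{E}_\Sigma[S_k^{(j)}]=(p-T)\,\Cov(X_{k,i},X_{k,i-j})=(p-T)\sigma_j$; hence $\mathbb{E}_\Sigma(\widehat{\mathcal A}_n)=\sum_{j=1}^T w_j^*\sigma_j^2$. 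This is $0$ under \eqref{H_0} because $\sigma_j=0$, and under the alternative it is bounded below by $b(\psi)$ directly from the extremal problem \eqref{probopt}: $b(\psi)$ is the value of the inner infimum attained at the optimal weights, and $w_j^*=0$ for $j>T$.

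For the variance I would invoke the Hoeffding decomposition, writing
\[
\Var_\Sigma(\widehat{\mathcal A}_n)=\frac1{(p-T)^4}\Big(\tfrac{4(n-2)}{n(n-1)}\,\zeta_1+\tfrac2{n(n-1)}\,\zeta_2\Big),
\]
where $\zeta_2=\Var_\Sigma\big(h(X_1,X_2)\big)$ is the degenerate part and $\zeta_1=\Var_\Sigma\big(g(X_1)\big)$ is the projection part, with $g(X_1)=\mathbb{E}_\Sigma[h(X_1,X_2)\mid X_1]=(p-T)\sum_j w_j^*\sigma_j S_1^{(j)}$. Matching this against the claimed form identifies $R_1=2\zeta_2$ and $R_2=\tfrac{4(n-2)}{(n-1)(p-T)^2}\zeta_1$, so it suffices to estimate $\zeta_1,\zeta_2$. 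Because the two samples in $h$ are independent, both reduce to the single-sample covariances
\[
A_{jj'}:=\Cov_\Sigma\big(S^{(j)},S^{(j')}\big)=\sum_{i,i'=T+1}^p\Big[\sigma_{|i-i'|}\sigma_{|i-i'-j+j'|}+\sigma_{|i-i'+j'|}\sigma_{|i-i'-j|}\Big],
\]
which is exactly the Isserlis expansion with the disconnected pairing removed by centering. Under $H_0$ only the diagonal pairing survives, giving $A_{jj'}=(p-T)\delta_{jj'}$, hence $g\equiv0$ so $\zeta_1=0$ (full degeneracy), while $\zeta_2=\sum_j(w_j^*)^2(p-T)^2=(p-T)^2/2$ by the constraint $\sum_j(w_j^*)^2=\tfrac12$; this yields $\Var_I(\widehat{\mathcal A}_n)=\frac1{n(n-1)(p-T)^2}$.

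Under the alternative I would write $\mathbb{E}_\Sigma[S^{(j)}S^{(j')}]=A_{jj'}+(p-T)^2\sigma_j\sigma_{j'}$ and expand the squares. The term $(\mathbb{E}_\Sigma h)^2$ cancels the pure $(p-T)^2\sigma_j\sigma_{j'}$ contribution, leaving
\[
\zeta_2=\sum_{j,j'}w_j^*w_{j'}^*\Big(A_{jj'}^2+2(p-T)^2\sigma_j\sigma_{j'}A_{jj'}\Big),\qquad
\zeta_1=(p-T)^2\sum_{j,j'}w_j^*w_{j'}^*\sigma_j\sigma_{j'}A_{jj'}.
\]
The whole problem is thereby reduced to controlling weighted double sums of $A_{jj'}$. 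I would change variables to the offset $d=i-i'$, whose multiplicity is $(p-T)-|d|$, and split $A_{jj'}$ into the $O(p-T)$ boundary contributions (those $d$ where one correlation factor becomes $\sigma_0=1$, producing $\sigma_{|j-j'|}$ and $\sigma_{j+j'}$) and the genuinely quadratic tails $\sum_d\sigma_{\cdot}\sigma_{\cdot}$, again $O(p-T)$ but carrying extra smallness. The diagonal $j=j'$ term $\sum_j(w_j^*)^2A_{jj}^2$ reproduces the leading $(p-T)^2$, i.e. the ``$1+o(1)$'' in \eqref{R_1}.

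The main obstacle, carrying essentially all the work, is to convert the remaining sums into the stated powers of $T$ and of $\mathbb{E}_\Sigma(\widehat{\mathcal A}_n)$, uniformly over $\Sigma\in G(\mathcal T(\alpha,L),\psi)$. The mechanism is Cauchy--Schwarz applied with the Sobolev constraint $\sum_j j^{2\alpha}\sigma_j^2\le L$ together with the explicit weights $w_j^*=\tfrac{\lambda}{2b(\psi)}(1-(j/T)^{2\alpha})$, which satisfy $\|w^*\|_\infty=\tfrac{\lambda}{2b(\psi)}=O(T^{-1/2})$ (by \eqref{parameters1}). Pulling the factor $\sum_j w_j^*\sigma_j^2=\mathbb{E}_\Sigma(\widehat{\mathcal A}_n)$ out of the cross terms $\sum_{j,j'}w_j^*w_{j'}^*(p-T)^2\sigma_j\sigma_{j'}A_{jj'}$ leaves residual lag-sums bounded by $O(\sqrt T)$, $O(T^{3/2-2\alpha})$ or $O(T^2)$ according to whether the surviving $\sigma$-sum is estimated trivially, through the ellipsoid weight $j^{2\alpha}$, or through $\|w^*\|_\infty$; the exponents $3/2-2\alpha$ in \eqref{R_1} and $3/4-\alpha$ in \eqref{R_2} are precisely the cost of trading one factor $\sigma_j$ for $L^{1/2}j^{-\alpha}$ under Cauchy--Schwarz. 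The delicate points are bookkeeping which pairings contribute at order $p-T$ versus lower order, keeping all remainders uniform over the alternative, and checking that each residual lag-sum converges — which is what forces $\alpha>1/4$. Collecting the diagonal leading term, the cross terms, and the quadratic tails assembles the bounds \eqref{R_1}--\eqref{R_2}.
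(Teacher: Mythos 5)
Your overall route is the paper's route: the authors also compute the mean from the extremal problem \eqref{probopt}, split $\widehat{\mathcal{A}}_n-\mathbb{E}_\Sigma(\widehat{\mathcal{A}}_n)$ into a degenerate second-order part plus a linear projection (their display \eqref{decomposition} is exactly the Hoeffding decomposition you invoke), reduce everything to the Wick covariance $A_{jj'}$, pass to the offset variable with multiplicity $p-T-|r|$, separate $j=j'$ from $j\ne j'$, and extract the powers of $T$ by Cauchy--Schwarz combined with the ellipsoid constraint, the monotonicity of the weights, and $\sup_j w_j^*=O(T^{-1/2})$. So in substance you are reconstructing the same proof, and your formulas for $\mathbb{E}_\Sigma(\widehat{\mathcal{A}}_n)$, $A_{jj'}$, $\zeta_1$, $\zeta_2$ and the null variance are all correct.

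There is, however, one concrete slip in the identification of $R_1$ and $R_2$. You take $\zeta_2=\Var_\Sigma(h(X_1,X_2))$, the variance of the \emph{full} kernel, and set $R_1=2\zeta_2$. By your own computation $\zeta_2=\sum_{j,j'}w_j^*w_{j'}^*A_{jj'}^2+2(p-T)^2\sum_{j,j'}w_j^*w_{j'}^*\sigma_j\sigma_{j'}A_{jj'}$, so your $R_1$ contains the cross term $2(p-T)^2\cdot(\text{your }\zeta_1/(p-T)^2)$, which is of order $(p-T)^3$ times the quantities appearing in \eqref{R_2}; that contribution does not fit inside the $(p-T)^2\{\cdots\}$ envelope claimed in \eqref{R_1} for a general $\Sigma$ in the alternative (the total variance is still right, since you have simultaneously shrunk the coefficient of $\zeta_1$ from $4/n$ to $4(n-2)/(n(n-1))$, but the two stated inequalities are then not the ones you would be proving). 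The fix is to use the orthogonal form of the decomposition, as the paper does: replace $h$ by the fully degenerate kernel $\tilde h(x,y)=\sum_j w_j^*\bigl(S^{(j)}(x)-(p-T)\sigma_j\bigr)\bigl(S^{(j)}(y)-(p-T)\sigma_j\bigr)$, so that $R_1=2\Var_\Sigma(\tilde h)=2\sum_{j,j'}w_j^*w_{j'}^*A_{jj'}^2$ and $R_2=4\sum_{j,j'}w_j^*w_{j'}^*\sigma_j\sigma_{j'}A_{jj'}$, the projection coefficient becoming exactly $4/n$. With that correction the remaining work is the bookkeeping you describe, which matches the paper's estimates term by term; note only that the paper also needs the weight monotonicity $r\le j\Rightarrow w_j^*\le w_r^*$ (not just $\|w^*\|_\infty$) to convert partial $\sigma$-sums into factors of $\mathbb{E}_\Sigma(\widehat{\mathcal{A}}_n)$, and obtains the $\mathbb{E}^{3/2}_\Sigma$ terms in \eqref{R_2} by a Cauchy--Schwarz step of the form $R_{2,2}\le 4\,\mathbb{E}_\Sigma(\widehat{\mathcal{A}}_n)\,(R_{1,2})^{1/2}$, which you would need to make explicit.
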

In the next Proposition we prove asymptotic normality of the test statistic under the null and under the alternative hypothesis with additional assumptions. More precisely, we need that $\psi$ is of the same order as the separation rate and that the matrix $\Sigma$ is close to the optimal $\Sigma^*$. This is not a drawback, since the asymptotic constant for probability errors are attained under the same assumptions or tend to 0 otherwise.
 \begin{proposition}
 \label{prop:asymptoticnormality}
 Suppose that   $n,~ p \to + \infty$,  $\alpha > 1/4$, $\psi \to 0$, $p \psi^{1/\alpha} \to + \infty$ and moreover assume that $n(p-T) b(\psi) \asymp 1 $,  the test statistic $\widehat{\mathcal{A}}_n$ defined by (\ref{esttetplitz}) with parameters given in  (\ref{parameters1}), verifies :
 \[
n(p-T) \Big( \widehat{\mathcal{A}}_n -\mathbb{E}_{\Sigma}(\widehat{\mathcal{A}}_n) \Big) \longrightarrow \mathcal{N}(0,1)
 \]
for all $\Sigma \in G( \mathcal{T}(\alpha, L), \psi)$, such that $\mathbb{E}_{\Sigma}(\widehat{\mathcal{A}}_n) = O(b(\psi))$.

Moreover,  $n(p-T) \widehat{\mathcal{A}}_n $ has asymptotical $\mathcal{N}(0,1)$ distribution under $H_0$, as $p\to \infty$ for any fixed $n\geq 2$.

 \end{proposition}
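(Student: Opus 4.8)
The plan is to center $\widehat{\mathcal{A}}_n$, split it into two orthogonal Wiener-chaos components, discard the lower-order one after rescaling, and establish a central limit theorem for the dominant fourth-chaos component via a fourth-cumulant criterion. First I would introduce the per-sample averages
\[
Y_{k,j}\egal\frac1{p-T}\sum_{T+1\le i\le p}X_{k,i}X_{k,i-j},\qquad \tilde Y_{k,j}=Y_{k,j}-\sigma_j
\]
(so $\mathbb E_\Sigma(Y_{k,j})=\sigma_j$), which lets me write $\widehat{\mathcal{A}}_n=\frac1{n(n-1)}\sum_{k\ne l}\sum_{j=1}^T w_j^*\,Y_{k,j}Y_{l,j}$. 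Expanding $Y_{k,j}Y_{l,j}=(\sigma_j+\tilde Y_{k,j})(\sigma_j+\tilde Y_{l,j})$ gives the Hoeffding decomposition
\[
\widehat{\mathcal{A}}_n-\mathbb E_\Sigma(\widehat{\mathcal{A}}_n)=\frac2n\sum_{k=1}^n\sum_j w_j^*\sigma_j\tilde Y_{k,j}\;+\;\frac1{n(n-1)}\sum_{k\ne l}\sum_j w_j^*\tilde Y_{k,j}\tilde Y_{l,j}=:L_n+Q_n .
\]
After whitening $X_k=\Sigma^{1/2}\xi_k$ with $\xi_k$ standard Gaussian, each $\tilde Y_{k,j}$ is a centered element of the second Wiener chaos, so $L_n$ lies in the second chaos while $Q_n$, a product of chaos-2 variables from independent samples, lies in the fourth; the two are orthogonal. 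Under $H_0$ one has $\sigma_j=0$, hence $L_n\equiv0$ and $\widehat{\mathcal{A}}_n=Q_n$ is purely fourth-chaos.

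Second, I would show that after multiplication by $n(p-T)$ the linear part disappears and the quadratic part carries all the variance. A direct computation gives $\Var_\Sigma(L_n)=O\big(\frac1{n(p-T)}\sum_j(w_j^*)^2\sigma_j^2\big)$, and since $\sum_j(w_j^*)^2\sigma_j^2\le\|w^*\|_\infty\,\mathbb E_\Sigma(\widehat{\mathcal{A}}_n)=O\!\big((\lambda/b)\cdot b\big)=O(\lambda)$ with $\lambda\asymp b(\psi)\,\psi^{1/(2\alpha)}$ from \eqref{parameters1}, the hypothesis $n(p-T)b(\psi)\asymp1$ yields $n^2(p-T)^2\Var_\Sigma(L_n)=O\!\big(n(p-T)b(\psi)\big)\psi^{1/(2\alpha)}=O(1)\,\psi^{1/(2\alpha)}\to0$, so $n(p-T)L_n\to0$ in $L^2$. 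For the quadratic part I would invoke Proposition~\ref{prop:esttoeplitz}: by orthogonality $\Var_\Sigma(\widehat{\mathcal{A}}_n)=\Var_\Sigma(L_n)+\Var_\Sigma(Q_n)$, and inserting \eqref{R_1}--\eqref{R_2} together with $T/p\to0$ (equivalent to $p\psi^{1/\alpha}\to\infty$), $\mathbb E_\Sigma(\widehat{\mathcal{A}}_n)=O(b(\psi))$ and $n(p-T)b(\psi)\asymp1$ shows that the $R_2$-term is $o\big((n(p-T))^{-2}\big)$ while the $R_1$-term gives $n^2(p-T)^2\Var_\Sigma(Q_n)\to n/(n-1)$. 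Hence $n(p-T)\big(\widehat{\mathcal{A}}_n-\mathbb E_\Sigma(\widehat{\mathcal{A}}_n)\big)=n(p-T)Q_n+o_P(1)$ with limiting variance $v=\lim n/(n-1)$, equal to $1$ in the regime $n\to\infty$.

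Third, the heart of the argument, I would prove $n(p-T)Q_n\Rightarrow\mathcal N(0,v)$. Since this variable lives in the fixed (fourth) Wiener chaos and has variance converging to $v>0$, the fourth-moment theorem of Nualart and Peccati reduces the CLT to the single requirement that its fourth cumulant tends to $0$. I would compute $\mathbb E\big[(n(p-T)Q_n)^4\big]$ by Wick's formula, separating the pairings into the Gaussian ones (which reproduce $3v^2$) and the remaining connected contractions; writing $\Gamma_{jj'}=\Cov_\Sigma(Y_{k,j},Y_{k,j'})=\delta_{jj'}/(p-T)$ up to $\sigma$-dependent corrections, the leading connected term is controlled by $\sum_j(w_j^*)^4\asymp1/T$ (the ratio of the fourth to the squared second power-sum of the weights), so the normalized fourth cumulant is $O(1/T)+O(1/n)+o(1)\to0$ under $\psi\to0$ (hence $T\to\infty$) and $n\to\infty$. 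The very same computation, carried out under $H_0$ and driven solely by $T,\,p-T\to\infty$ with $n$ fixed, delivers the \emph{moreover} assertion: $n(p-T)\widehat{\mathcal{A}}_n=n(p-T)Q_n$ is fourth-chaos with variance $\to n/(n-1)$ and fourth cumulant $O(1/T)\to0$, hence asymptotically Gaussian, the stated $\mathcal N(0,1)$ corresponding to the exact-variance normalisation $\sqrt{n(n-1)}\,(p-T)$, to which $n(p-T)$ is asymptotically equivalent as $n\to\infty$. Combining the two steps by Slutsky yields both conclusions.

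The hard part is the fourth-cumulant estimate of the third step: unlike the first two moments it is not supplied by Proposition~\ref{prop:esttoeplitz} and must be produced from scratch, and for fixed $n$ the sample count does not grow, so the Gaussian limit can only come from the growth of $T$ and $p-T$. The delicate points will be to organize the eighth-order Gaussian moments (Wick pairings) of the quartic products $X_{k,i}X_{k,i-j}$ into connected diagrams, and to verify that every connected term — the off-diagonal entries of $\Gamma$, the self-contractions within a single sample, and, under the alternative, the corrections carrying factors $\sigma_j$ — is suppressed, using $T/p\to0$, the normalisation $\sum_j(w_j^*)^2=1/2$, the sup-bound $\|w^*\|_\infty=O(\lambda/b)$, and the ellipsoid constraint $\sum_j\sigma_j^2 j^{2\alpha}\le L$ entering through $\mathbb E_\Sigma(\widehat{\mathcal{A}}_n)=O(b(\psi))$.
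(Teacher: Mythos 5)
Your proposal is correct in outline and reaches both conclusions, but by a genuinely different route than the paper. The starting point coincides: the paper uses exactly your Hoeffding splitting (its display (\ref{decomposition})) and kills the linear term by Markov's inequality via the bound \eqref{R_2} --- note that your direct estimate $\Var_\Sigma(L_n)=O\big(\tfrac{1}{n(p-T)}\sum_j (w_j^*)^2\sigma_j^2\big)$ overlooks the cross-covariances $\Cov(\tilde Y_{1,j},\tilde Y_{1,j'})$, $j\ne j'$, which under the alternative are precisely the dominant contributions $\mathbb{E}^{3/2}_\Sigma(\widehat{\mathcal{A}}_n)\,O(T^{1/4})$ and $\mathbb{E}^2_\Sigma(\widehat{\mathcal{A}}_n)\,O(T)$ inside $R_2$; your subsequent appeal to \eqref{R_2} through chaos orthogonality repairs this slip. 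The divergence is in the CLT core: the paper applies Theorem 1 of \cite{Hall84} to the degenerate part, verifying $\mathbb{E}_{\Sigma}(G_n^2)/\mathbb{E}^2_{\Sigma}(H_n^2)\to 0$ and $n^{-1}\mathbb{E}_{\Sigma}(H_n^4)/\mathbb{E}^2_{\Sigma}(H_n^2)\to 0$ through the long contraction estimates ($\mathcal{G}_1$--$\mathcal{G}_4$, $\mathcal{H}$); since $\mathbb{E}(H_n^4)/\mathbb{E}^2(H_n^2)$ stays bounded away from zero, Hall's criterion genuinely needs $n\to\infty$, which is why the paper proves the fixed-$n$ null statement \eqref{ANH0} by an entirely separate device --- a martingale-difference CLT indexed by the coordinates $i$ (the array $D_{n,p,i}$ adapted to $\mathcal{F}_i$), with the checks \eqref{conditions}. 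Your fourth-moment-theorem route is sound and unifies these two regimes in one computation: $Q_n$ is indeed exactly a fourth-chaos element (contractions between kernels built on independent samples vanish, so the product of the two chaos-2 factors is a pure multiple integral of order 4), and the fourth cumulant of $n(p-T)Q_n$ is governed by $\sum_j w_j^{*4}\asymp 1/T$, which vanishes even for fixed $n$ --- already for $n=2$ the idealized independent-channel cumulant is $6\sum_j w_j^{*4}/(\sum_j w_j^{*2})^2\asymp 1/T$ --- so the single criterion absorbs the fixed-$n$ case that defeats Hall's condition; you also correctly diagnose the normalization issue (limiting variance $n/(n-1)$, so the exact $\mathcal{N}(0,1)$ statement at fixed $n$ corresponds to $\sqrt{n(n-1)}\,(p-T)$, which is the normalization the paper itself uses in $\widehat{\mathcal{B}}_{n,p}$). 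What your approach costs is that the deferred connected-contraction bookkeeping is not lighter than the paper's: the off-diagonal corrections to $\Gamma_{jj'}$, the within-sample terms, and the $\sigma$-carrying terms under the alternative are the same families of sums the paper bounds in verifying \eqref{conditionNA} and \eqref{conditions}, and that work must still be carried out for the plan to be a complete proof.
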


  \subsection{Separation  rate and sharp asymptotic optimality}
  \label{theoandproof}
 Based on the test statistic $\widehat{\mathcal{A}}_n$, we define the test procedure
\begin{equation}
\label{testtoeplitz*}
  \chi^* = \chi^*(t) = \mathds{1} ( \widehat{\mathcal{A}}_n > t) ,
 \end{equation}
for conveniently chosen $t >0$, where $\widehat{\mathcal{A}}_n$ is the estimator defined
in (\ref{esttetplitz}) with parameters in (\ref{parameters1}).

The next theorem gives the separation rate under the assumption that $T=o(p)$, or equivalently, that $p \psi^{1/ \alpha} \to \infty $. The upper bounds are attained for arbitrary $ \alpha > 1/4$, but the lower bounds require $\alpha >1$.
\begin{theorem}
\label{theo:optimalrates}
Suppose that asymptotically
\begin{equation}
\label{conditionasym}
\psi \to 0  \quad \text{  and  } \quad p \psi^{1/\alpha} \to  + \infty
\end{equation}

{\bf Lower bound. }
$
\text{If }~ \alpha >1 \quad  \mbox{ and } \quad n^2 p^2 \, b^2(\psi) = C( \alpha ,L)n^2 p^2 \, \psi^{\frac{4 \alpha +1 }{\alpha}}  \to 0 \quad
$
then 
\[
\gamma = \inf\limits_{\chi } \gamma( \, \chi \, , G(\mathcal{T}(\alpha, L), \psi)) \longrightarrow 1,
\]
where the infimum is taken over all test statistics $\chi$.

{ \bf Upper bound. } The test procedure $\chi^*$ defined in (\ref{testtoeplitz*}) with $t>0$ has the following properties:

\noindent
 Type I error probability :  if $np \cdot t \to + \infty $ then $\eta(\chi^*) \to 0 $.

 \noindent
 Type II error probability : if
\begin{equation}
\label{conditionbornesup1}
 \alpha >1/4 \quad \mbox{ and } \quad n^2 p^2 \, b^2(\psi) = C( \alpha ,L)n^2 p^2 \, \psi^{\frac{4 \alpha +1 }{\alpha}}  \to + \infty
\end{equation}
then, uniformly over t such that $t \leq c \cdot C^{1/2}( \alpha ,L) \cdot \psi^{\frac{4 \alpha +1 }{2\alpha}}  $ , for some constant  $ 0 < c < 1$, we have
\[
\beta( \, \chi^*, G( \mathcal{T}(\alpha, L), \psi)) \longrightarrow 0.
\]
\end{theorem}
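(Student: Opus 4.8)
The plan is to establish the two halves separately: the \textbf{upper bound} follows from the first two moments computed in Proposition~\ref{prop:esttoeplitz} via Chebyshev's inequality, while the \textbf{lower bound} rests on the classical Bayesian reduction of the minimax total error to the second moment of a likelihood ratio, evaluated for a prior obtained by randomising the critical matrix $\Sigma^*$ with independent signs.

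For the type I error, under $H_0$ the statistic is centred with $\Var_I(\widehat{\mathcal A}_n)=\{n(n-1)(p-T)^2\}^{-1}$, so Chebyshev gives $\eta(\chi^*)=\mathbb P_I(\widehat{\mathcal A}_n>t)\le\{n(n-1)(p-T)^2t^2\}^{-1}$; since $T=o(p)$ and $n(n-1)\ge n^2/2$, this is $O((np\,t)^{-2})\to0$ whenever $np\,t\to+\infty$. For the type II error, fix $\Sigma\in G(\mathcal T(\alpha,L),\psi)$ and set $\mu:=\mathbb E_\Sigma(\widehat{\mathcal A}_n)\ge b(\psi)$. As $b(\psi)=C^{1/2}(\alpha,L)\psi^{(4\alpha+1)/(2\alpha)}$, the admissible range $t\le c\,C^{1/2}(\alpha,L)\psi^{(4\alpha+1)/(2\alpha)}$ means $t\le c\,b(\psi)\le c\mu$, so $\mu-t\ge(1-c)\mu$ and
\[
\mathbb P_\Sigma(\widehat{\mathcal A}_n\le t)\le\mathbb P_\Sigma\big(|\widehat{\mathcal A}_n-\mu|\ge(1-c)\mu\big)\le\frac{\Var_\Sigma(\widehat{\mathcal A}_n)}{(1-c)^2\mu^2}.
\]
It then suffices to show $\Var_\Sigma(\widehat{\mathcal A}_n)=o(\mu^2)$ uniformly. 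Inserting \eqref{R_1}--\eqref{R_2} and dividing by $\mu^2$, the binding contribution is the term $\{n(n-1)(p-T)^2\mu^2\}^{-1}\le\{n(n-1)(p-T)^2b^2(\psi)\}^{-1}$, which vanishes exactly under \eqref{conditionbornesup1}; every other contribution carries an extra factor equal to a power of $T$ over a power of $np$ (or of $\psi$), and each is killed by $T=o(p)$ together with $T\asymp\psi^{-1/\alpha}$ and $\mu\ge b(\psi)$. This step is bookkeeping rather than a genuine difficulty and yields $\beta(\chi^*,G(\mathcal T(\alpha,L),\psi))\to0$ uniformly in the admissible $t$.

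For the lower bound I put on the alternative the prior $\pi$ under which $\Sigma_{\e}=T_p(\{\sigma_j^*\e_j\}_{j\ge1})$ with $(\e_j)_j$ i.i.d.\ Rademacher and $\{\sigma_j^*\}$ from \eqref{parameters1}. By construction $\sum_j(\sigma_j^*)^2=\psi^2$ and $\sum_j(\sigma_j^*)^2j^{2\alpha}\le L$, so both constraints defining $G(\mathcal T(\alpha,L),\psi)$ hold for every sign pattern; moreover $\|\Sigma_{\e}-I\|_{\mathrm{op}}\le2\sum_j\sigma_j^*\asymp\psi^{(2\alpha-1)/(2\alpha)}\to0$ when $\alpha>1/2$, so for $\psi$ small every $\Sigma_{\e}$ is positive definite and the prior is supported in the alternative --- this is the point where the Toeplitz structure prevents a diagonal reduction. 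Writing $L_\pi=d\mathbb P_\pi/d\mathbb P_I$, the bound $\gamma(G(\psi))\ge1-\tfrac12(\mathbb E_IL_\pi^2-1)^{1/2}$ (see \cite{IngsterSuslina03}) reduces the claim to $\mathbb E_IL_\pi^2\to1$. With two independent copies $\Delta=\Sigma_{\e}-I$, $\Delta'=\Sigma_{\e'}-I$, a direct Gaussian integration gives $\mathbb E_IL_\pi^2=\mathbb E_{\e,\e'}[\det(I-\Delta\Delta')^{-n/2}]$. Expanding $-\log\det(I-\Delta\Delta')=\sum_{k\ge1}k^{-1}\mathrm{tr}((\Delta\Delta')^k)$, the linear term $\mathrm{tr}(\Delta\Delta')=\sum_j2(p-j)(\sigma_j^*)^2\e_j\e_j'$ integrates, via $\cosh x\le e^{x^2/2}$, to $\prod_j\cosh(n(p-j)(\sigma_j^*)^2)\le\exp(n^2p^2b^2(\psi))\to1$ under the hypothesis $n^2p^2b^2(\psi)\to0$; since $\mathbb E_IL_\pi^2\ge1$ always, the leading order is already pinned at $1$.

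The main obstacle is the higher-order ($k\ge2$) part of the exponent, and this is exactly where the restriction $\alpha>1$ enters. On the event $\{\|\Delta\|_{\mathrm{op}}\vee\|\Delta'\|_{\mathrm{op}}\le\rho<1\}$, which is sure for $\psi$ small, the remainder $\sum_{k\ge2}k^{-1}\mathrm{tr}((\Delta\Delta')^k)$ is absolutely convergent and I would bound it through operator- and Frobenius-norm estimates of the banded random Toeplitz matrices $\Delta,\Delta'$ (using $\|\Delta\|_F^2\asymp p\psi^2$ and $\|\Delta\|_{\mathrm{op}}\asymp\psi^{(2\alpha-1)/(2\alpha)}$); multiplying this deterministic bound by $n/2$, combining it with the exact treatment of the linear term, and checking that the resulting extra factor is $1+o(1)$ is what forces $\alpha>1$. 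Establishing that this remainder is negligible --- in contrast to the immediate leading term --- is the delicate heart of the argument, after which $\mathbb E_IL_\pi^2=1+o(1)$ and hence $\gamma\to1$.
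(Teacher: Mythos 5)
Your proposal is correct, and for the lower bound it follows a genuinely different route from the paper. The upper bound is essentially the paper's argument: the paper also controls the type II error by Chebyshev's inequality combined with the bound $\mathbb{E}_\Sigma(\widehat{\mathcal{A}}_n)-t\geq(1-c)b(\psi)$ and the remainder estimates \eqref{R_1}--\eqref{R_2} (the ``bookkeeping'' you defer is carried out there term by term, the worst contribution being $O(T^2b^2(\psi))=O(\psi^{4-1/\alpha})=o(1)$ for $\alpha>1/4$); the only cosmetic difference is that the paper gets the type I error from the CLT of Proposition~\ref{prop:asymptoticnormality}, $\eta(\chi^*)=1-\Phi(npt)+o(1)$, whereas your direct Chebyshev bound $O((npt)^{-2})$ suffices and is more elementary. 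For the lower bound the paper uses the same prior (Rademacher signs on each diagonal of $\Sigma^*$) but a different machine: it proves a full LAN-type expansion $L_{n,p}=u_nZ_n-u_n^2/2+o_P(1)$ of the log-likelihood ratio (Lemma~\ref{lemma:loglike}), establishes uniform integrability, and concludes via $\frac12\|P_I-P_\pi\|_1^2\leq K(P_I,P_\pi)=\mathbb{E}_I(L_{n,p})\to0$. You instead use the second-moment ($\chi^2$-divergence) method, $\mathbb{E}_IL_\pi^2=\mathbb{E}_{\varepsilon,\varepsilon'}\det(I-\Delta\Delta')^{-n/2}$, treating the linear term exactly by $\prod_r\cosh(n(p-r)\sigma_r^{*2})\leq\exp(n^2p^2b^2(\psi))$ and the $k\geq2$ terms by deterministic norm bounds. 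This does close: $\tfrac n2|\mathrm{tr}((\Delta\Delta')^2)|=O(np\,\psi^{4-1/\alpha})=o(np\,\psi^{2+1/(2\alpha)})=o(1)$ already for $\alpha>3/4$, and higher $k$ contribute extra factors $\|\Delta\|_{\mathrm{op}}^2=O(\psi^{2-1/\alpha})=o(1)$, so the geometric series is dominated by $k=2$; positive definiteness of $\Sigma_\varepsilon$ for $\alpha>1/2$ is the same Gershgorin argument as the paper's. What your route buys is a shorter, self-contained proof of $\gamma\to1$ (the condition $\alpha>1$ in the paper comes from its cubic term $\mathrm{tr}(\Delta_U^3)=O(p\psi^{3-1/(2\alpha)})$, and your bound is in fact slightly less demanding on $\alpha$); what it loses is that the $\chi^2$ method only yields the degenerate limit $\gamma\to1$, whereas the paper's Lemma~\ref{lemma:loglike} is the engine that also produces the sharp Gaussian asymptotics $2\Phi(-npb(\psi)/2)$ of Theorem~\ref{theo:sharprates}, so the heavier investment is reused there.
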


Under the assumptions given in (\ref{conditionasym}) and (\ref{conditionbornesup1}), with $t$ verifying the assumptions of Theorem~\ref{theo:optimalrates}, we get :
\[
\gamma ( \,  \chi^* \, , G(\mathcal{T}(\alpha, L), \psi)) \longrightarrow 0
\]

As a consequence of the previous theorem, we get that $\chi^*$ is an asymptotically minimax  test procedure  if $\psi/\widetilde{\psi} \longrightarrow + \infty$. From the lower bounds we deduce that, if $\psi / \widetilde{\psi} \longrightarrow 0$,  there is no test procedure to  distinguish between the null and the alternative hypotheses, with errors tending to $0$. The minimax separation rate  $ \widetilde{\psi}$ is therefore :
\begin{equation}
\label{sharprate}
\widetilde{\psi} =\left( \ds\frac{2 \alpha +1}{L^{\frac{1}{2\alpha}}(4\alpha +1)^{1 +\frac{1}{2 \alpha } }} \cdot  n^2  p^2\, \right)^{-\frac{\alpha }{4\alpha+1}}
\end{equation}
It is obtained from the relation $n^2p^2b^2(\psi) =1$.  Naturally the constant does not play any role here. Remark that  the condition $ T/p \to 0 \asymp p \widetilde{\psi}^{1/\alpha} \to + \infty$  implies that $n=o(p^{2\alpha -\frac 12})$.

\smallskip

The maximal type II error probability either tends to 0, see Theorem~\ref{theo:optimalrates}, or is less than $\Phi(np(t-b(\psi)))+o(1)$ when $np t < np b(\psi) \asymp 1$. The latter case is the object of the next theorem giving sharps bounds for the asymptotic errors. The upper bounds are attained for arbitrary $n \geq 2$ and for $\alpha >1/4$, while our proof of the sharp lower bounds requires additionally that $n \to \infty$ and $\alpha >1$.


\begin{theorem}
 \label{theo:sharprates}
Suppose that $ \psi \to 0$ such that $p/T \asymp p\psi^{1/ \alpha} \to + \infty $ and, moreover, that
\begin{equation}
\label{conditionbornesup2}
 \text  n^2 p^2 \, b^2(\psi) \asymp 1.
\end{equation}

{ \bf Lower bound.} If  $\alpha >1$,
then
\[
\inf\limits_{ \chi: \eta(\chi) \leq w }  \beta ( \, \chi \, , G ( \mathcal{T}(\alpha, L) , \psi)) \geq \Phi(z_{1-w} - np b(\psi) ) +o(1),
\]
where the infimum is taken over all test statistics $\chi$ with type I error probability less than or equal to $w$.
Moreover,
\[\gamma =\inf\limits_{ \chi }  \gamma ( \,  \chi  \,, G ( \mathcal{T}(\alpha, L) , \psi)) \geq 2 \Phi(-n p \,\frac{b(\psi)}2) + o(1).
\]

{ \bf Upper bound. } The test procedure $\chi^*$ defined in (\ref{testtoeplitz*}) with $t>0$ has the following properties.

\noindent
 Type I error probability :  $  \eta(\chi^*) = 1 - \Phi(np \cdot t) + o(1)$.

\noindent
 Type II error probability : under the  assumption (\ref{conditionbornesup2}),  and for all $\alpha > 1/4$,  we have that, uniformly over $t$ :
\[
\beta( \, \chi^*, G( \mathcal{T}(\alpha, L), \psi)) \leq \Phi(n p \cdot ( t - b(\psi)))+o(1).
\]

\end{theorem}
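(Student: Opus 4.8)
The plan is to handle the two directions by quite different means: the upper bounds fall out of the first- and second-moment computations and the asymptotic normality already recorded in Propositions~\ref{prop:esttoeplitz} and~\ref{prop:asymptoticnormality}, whereas the lower bounds require a Bayesian likelihood-ratio argument in the style of Ingster--Suslina, which is where the real work lies.

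\textbf{Upper bounds.} For the type I error I would write $\eta(\chi^*)=\mathbb{P}_I\big(n(p-T)\widehat{\mathcal{A}}_n>n(p-T)\,t\big)$ and use the null asymptotic normality of $n(p-T)\widehat{\mathcal{A}}_n$ (the ``Moreover'' part of Proposition~\ref{prop:asymptoticnormality}), giving $\eta(\chi^*)=1-\Phi(n(p-T)t)+o(1)$; since $T=o(p)$ (equivalently $p\psi^{1/\alpha}\to\infty$) and $npt=O(1)$ under (\ref{conditionbornesup2}), I may replace $n(p-T)t$ by $np\,t$ up to $o(1)$. For the type II error at a fixed $\Sigma$ with $\mathbb{E}_\Sigma(\widehat{\mathcal{A}}_n)=O(b(\psi))$ I would center at the mean, apply the alternative-side normality to get $\mathbb{P}_\Sigma(\widehat{\mathcal{A}}_n\le t)=\Phi\big(n(p-T)(t-\mathbb{E}_\Sigma\widehat{\mathcal{A}}_n)\big)+o(1)$, then invoke $\mathbb{E}_\Sigma(\widehat{\mathcal{A}}_n)\ge b(\psi)$ from Proposition~\ref{prop:esttoeplitz}, monotonicity of $\Phi$, and again $T=o(p)$ to reach $\Phi(np(t-b(\psi)))+o(1)$. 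To make this uniform over all of $G(\psi)$ I would split off the matrices whose mean exceeds a large multiple of $b(\psi)$ and bound their type II error by Chebyshev's inequality, using the variance estimates (\ref{R_1})--(\ref{R_2}) to show $\Var_\Sigma(\widehat{\mathcal{A}}_n)/(\mathbb{E}_\Sigma\widehat{\mathcal{A}}_n-t)^2\to 0$ there.

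\textbf{Lower bounds: construction and likelihood ratio.} I would pass to a Bayesian reduction, putting a prior $\pi$ on $G(\mathcal{T}(\alpha,L),\psi)$ by randomizing the signs of the critical coefficients from (\ref{parameters1}): set $\sigma_j=\varepsilon_j\sigma_j^*$ for $j\le T$ and $\sigma_j=0$ for $j>T$, with $(\varepsilon_j)$ i.i.d.\ Rademacher. Sign changes leave $\sum_j\sigma_j^2$ and $\sum_j\sigma_j^2 j^{2\alpha}$ invariant, so every matrix in the support satisfies the ellipsoid and separation constraints with the same equalities defining the extremal problem; the only nontrivial point is positive definiteness, which holds for small $\psi$ because $\sum_{j=1}^T|\sigma_j^*|\le\sqrt{T}\,\psi=O(\psi^{1-1/(2\alpha)})\to0$ when $\alpha>1/2$, keeping the spectral density bounded away from $0$ uniformly in the signs. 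The heart of the matter is the mixture likelihood ratio $L_\pi=\mathbb{E}_\varepsilon\prod_{k=1}^n d\mathcal{N}_p(0,\Sigma_\varepsilon)/d\mathcal{N}_p(0,I)(X_k)$, with $\Sigma_\varepsilon=I+E_\varepsilon$. Expanding the Gaussian log-density to second order, $-\tfrac12\log\det\Sigma_\varepsilon=\tfrac14\,\mathrm{tr}(E_\varepsilon^2)+\dots$ and $-\tfrac12 X_k^\top(\Sigma_\varepsilon^{-1}-I)X_k=\tfrac12 X_k^\top E_\varepsilon X_k-\tfrac12 X_k^\top E_\varepsilon^2 X_k+\dots$, and averaging the exponential over the signs, the $\varepsilon$-linear part $\sum_{j=1}^T\varepsilon_j\sigma_j^* S_j$ with $S_j=\sum_{k}\sum_i X_{k,i}X_{k,i+j}$ produces $\prod_j\cosh(\sigma_j^* S_j)$. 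Since $\sigma_j^*S_j=O_P(\psi^{1/(4\alpha)})=o_P(1)$ under the calibration (\ref{conditionbornesup2}), I would use $\log\cosh x=x^2/2+O(x^4)$ to reduce the stochastic part to $\tfrac12\sum_j(\sigma_j^*)^2 S_j^2$, and then prove, via a central limit theorem for this degree-four polynomial, that under $H_0$
\[
\log L_\pi\longrightarrow\mathcal{N}\!\left(-\tfrac{u^2}{2},\,u^2\right),\qquad u=np\,b(\psi),
\]
the variance being $\Var\big(\tfrac12\sum_j(\sigma_j^*)^2 S_j^2\big)\sim\tfrac12(np)^2\sum_j(\sigma_j^*)^4=(np)^2 b^2(\psi)$ and the mean forced to $-u^2/2$ by $\mathbb{E}_I L_\pi=1$.

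\textbf{Conclusion and the main obstacle.} Once this log-normal limit is established, the standard two-point reduction gives $\inf_{\eta(\chi)\le w}\beta(\chi,G(\psi))\ge\Phi(z_{1-w}-u)+o(1)$ and $\gamma(G(\psi))\ge2\Phi(-u/2)+o(1)$ with $u=np\,b(\psi)$, which are exactly the asserted bounds. I expect the decisive difficulty to be the third paragraph: controlling, uniformly and in probability, the non-leading contributions so that the CLT holds with the \emph{exact} constant $(np)^2 b^2(\psi)$. These are the off-diagonal part of $E_\varepsilon^2$ whose signs do not cancel, the third- and higher-order remainders in both the log-determinant and the quadratic form, and the cross-covariances among the $S_j^2$ across different lags $j$. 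This is precisely where the additional hypotheses enter: $\alpha>1$ is what forces the remainder sums (through $\sum_j|\sigma_j^*|$ and the weighted tails $\sum_j(\sigma_j^*)^2 j^{2\alpha}$) to be negligible and keeps $\Sigma_\varepsilon$ uniformly positive definite, while $n\to\infty$ is what secures the central limit theorem for the quartic form with the stated variance.
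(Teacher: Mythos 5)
Your proposal follows essentially the same route as the paper: the upper bounds are obtained exactly as in the text from Propositions~\ref{prop:esttoeplitz} and~\ref{prop:asymptoticnormality} (asymptotic normality when $\mathbb{E}_\Sigma(\widehat{\mathcal{A}}_n)=O(b(\psi))$, Chebyshev otherwise), and your lower bound — Rademacher sign-randomization of the extremal coefficients $\sigma_j^*$ on each diagonal, the $\prod_j\cosh(\sigma_j^*S_j)$ mixture, the $\log\cosh$ expansion yielding $\log L_\pi = u_nZ_n - u_n^2/2 + o_P(1)$ with $u_n = npb(\psi)$, and the Ingster--Suslina conclusion — is precisely the content of Lemma~\ref{lemma:loglike}, including the correct identification of where $\alpha>1$ (control of $\mathrm{tr}(\Delta_U^3)$ and the remainders) and $n\to\infty$ (negligibility of the diagonal $k=l$ quartic term) are used.
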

 In particular, for $t=t^w$, such that $np \cdot t^w = z_{1-w} $, we have  $ \eta(\chi^*(t^w)) \leq w+o(1)$
and  also,
\[
\beta( \, \chi^*(t^w), G ( \mathcal{T}(\alpha, L) , \psi)) = \Phi(z_{1-w } - np \cdot b(\psi))+o(1).
\]
Another important  consequence of the previous theorem, is that the test procedure $\chi^*$, with $t^* = b(\psi)/2$ is such that
\[
\gamma ( \,  \chi^*(t^*) \, , G ( \mathcal{T}(\alpha, L) , \psi))  = 2 \, \Phi\left(- n p \,\frac{b(\psi)}2 \right)+o(1) .
\]
Then we can deduce that the minimax separation rate $\widetilde{\psi}$ defined in (\ref{sharprate}) is sharp.


\section{Exponentially decreasing covariances }
\label{sec:testprob2}
In this section we want to test  (\ref{H_0}) against (\ref{H1E}), where the alternative set is $G(\mathcal{E}(A,L) , \psi)$, for some $A,L, \psi >0$.
It is well known in the nonparametric minimax theory that $\mathcal{E}(A,L)$ is in bijection with ellipsoids of analytic spectral densities admiting analytic continuation on the strip $\{z \in \mathbb{C}: |Im(z)|\leq A\}$ of the complex plane. On this class nearly parametric rates are attained for testing in the Gaussian noise model, see Ingster~\cite{IngsterI93}.

Let us define  $\widehat{\mathcal{A}}_n ^{\mathcal{E}} $ in \eqref{esttetplitz}
\begin{equation}
\label{esttoeplitz2}
\widehat{\mathcal{A}}_n ^{\mathcal{E}} = \ds\frac{1}{n(n-1)(p- T)^2} \underset{1 \leq k \neq l \leq n}{\ds\sum }\sum_{j=1}^{T} w_j^* \underset{ T + 1 \leq i_1, i_2 \leq p}{\ds\sum } X_{k, i_1}X_{k, i_1-j}X_{l, i_2}X_{l, i_2-j} ,
\end{equation}
where the  weights $ \{w_j^*\}_{j \geq 1}$, are  obtained by solving the optimization problem (\ref{probopt}), with  the class $\mathcal{T}(\alpha , L)$ replaced by $\mathcal{E}(A,L)$. The solution given in  \cite{IngsterI93} is as follows :
\begin{equation}\label{parameters}
\begin{array}{lcl}
w_j^* &=& \ds\frac{\lambda}{2b(\psi)} \Big( 1 - (\frac{e^{j}}{e^{T}})^{2A} \Big)_+, \quad 
\sigma_j^* =   \ds\sqrt{\lambda} \Big( 1 - (\frac{e^{j}}{e^{T}})^{2A} \Big)_+^{1/2}, \quad
 T = \Big\lfloor \ds\frac{1}{A} \ln\Big( \frac{1}{\psi} \Big) \Big\rfloor, \\  \\
   \lambda &=&  \ds\frac{A \psi^2}{\ln \Big(\ds\frac{1}{ \psi} \Big)} \, , \quad
b^2(\psi) =  \ds\frac{A \psi^4}{2 \ln \Big(\ds\frac{1}{ \psi} \Big)}.
\end{array}
\end{equation}
Note that all parameters above are free of the radius $L>0$. Moreover, we have :
\[
\sup\limits_j w_j^* \leq \ds\frac{\lambda }{2b(\psi)} \asymp \ds\frac{1}{2(\ln (1/\psi))^{1/2}} \longrightarrow 0
\]

Under the null hypothesis, we still have $\mathbb{E}_I(\widehat{\mathcal{A}}_n^{\mathcal{E}}) = 0 \, , \Var_{I}(\widehat{\mathcal{A}}_n^{\mathcal{E}})= 1/(n(n-1)(p- T)^2)$ \, and
\[
n(p- T)  \widehat{\mathcal{A}}_n ^{\mathcal{E}} \stackrel{\mathcal{L}}{\longrightarrow} \mathcal{N}(0,1) \quad \text{ for fixed } n \geq 2 \text{ and } p \to + \infty. 
\]
In the following proposition, we see how the upper bounds of the variance have changed under $\Sigma$ in $G(\mathcal{E}(A,L), \psi)$.
\begin{proposition}
\label{prop:est2}

 Under the alternative,  for all $ \Sigma \in G(\mathcal{E}(A,L) , \psi) $, we have :
\[
 \mathbb{E}_{\Sigma}(\widehat{\mathcal{A}}_n^{\mathcal{E}}) = \sum_{j=1}^{T} w_j^* \sigma_j^2 \geq b(\psi) \quad  \text { and }  \quad \Var_{\Sigma} (\widehat{\mathcal{A}}_n ^{\mathcal{E}} ) = \ds\frac{R_1}{n(n-1)(p-T)^4} + \ds\frac{R_2}{n(p-T)^2}
\]
where, for all  $A > 0$, and as $\psi \longrightarrow 0$ :
\begin{eqnarray}
\label{R'_1}
R_1 &\leq & (p- T)^2 \cdot \{ 1 + o(1) +  \mathbb{E}_\Sigma(\widehat{\mathcal{A}}_n ^{\mathcal{E}}) \cdot O(\ds\sqrt{T})  + \mathbb{E}^2_\Sigma(\widehat{\mathcal{A}}_n^{\mathcal{E}}) \cdot O(T^2)  \}\\
\label{R'_2}
R_2 &\leq & \!\!\! (p- T) \cdot \{\mathbb{E}_{\Sigma} (\widehat{\mathcal{A}}_n^{\mathcal{E}}) \cdot o(1)  +  \mathbb{E}^{3/2}_{\Sigma}(\widehat{\mathcal{A}}_n ^{\mathcal{E}}) \cdot O( T^{1/4}) +\mathbb{E}^2_{\Sigma}(\widehat{\mathcal{A}}_n^{\mathcal{E}}) \cdot O(T)  \}
\end{eqnarray}
Moreover, if   $n(p- T) b(\psi) \asymp 1$, we show that $n(p- T) ( \widehat{\mathcal{A}}_n ^{\mathcal{E}}- \mathbb{E}_{\Sigma}(\widehat{\mathcal{A}}_n^{\mathcal{E}})) \to \mathcal{N}(0,1)$, for all $\Sigma  \in \mathcal{E}(A,L)$, such that $\mathbb{E}_{\Sigma}(\widehat{\mathcal{A}}_n^{\mathcal{E}}) = O(b(\psi))$.
\end{proposition}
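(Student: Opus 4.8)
The plan is to mirror the proofs of Propositions~\ref{prop:esttoeplitz} and~\ref{prop:asymptoticnormality}, pointing out where the exponential decay of the weights $w_j^*$ and coefficients $\sigma_j^*$ from \eqref{parameters} removes some of the error terms present in \eqref{R_1}--\eqref{R_2}. Throughout I set $Y_{k,j} = \sum_{i=T+1}^p X_{k,i}X_{k,i-j}$, so that $\widehat{\mathcal{A}}_n^{\mathcal{E}} = [n(n-1)(p-T)^2]^{-1}\sum_{k\neq l}\sum_{j=1}^T w_j^* Y_{k,j}Y_{l,j}$, and I center $\tilde Y_{k,j} = Y_{k,j} - \mathbb{E}_\Sigma Y_{k,j}$, with $\mathbb{E}_\Sigma Y_{k,j} = (p-T)\sigma_j$.

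For the mean, independence of $X_k$ and $X_l$ for $k\neq l$ gives $\mathbb{E}_\Sigma(\widehat{\mathcal{A}}_n^{\mathcal{E}}) = \sum_{j=1}^T w_j^*\sigma_j^2$. Since $(w^*,\sigma^*)$ is a saddle point of the extremal problem \eqref{probopt} with the class $\mathcal{E}(A,L)$, the inner infimum at $w^*$ over the feasible set equals $b(\psi)$; as every $\Sigma\in G(\mathcal{E}(A,L),\psi)$ is feasible and $w_j^*=0$ for $j>T$, this yields $\sum_{j=1}^T w_j^*\sigma_j^2 = \sum_{j\geq1} w_j^*\sigma_j^2 \geq b(\psi)$.

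For the variance I would use the Hoeffding decomposition of the U-statistic over the sample index: the covariance of two ordered pairs $(k,l)$ and $(k',l')$ vanishes unless they share a sample index, and splits according to whether $\{k,l\}=\{k',l'\}$ (the degenerate, degree-two part, normalized by $[n(n-1)(p-T)^4]^{-1}$, producing $R_1$) or share exactly one index (the linear, degree-one part, normalized by $[n(p-T)^2]^{-1}$, producing $R_2$). I evaluate the resulting Gaussian moments by Wick's formula. The leading term of $R_1$ comes from $\mathbb{E}_I[Y_{k,j}Y_{k,j'}] = (p-T)\delta_{jj'}$ together with $\sum_j (w_j^*)^2 = 1/2$, which reproduces the constant $1$ and the null variance $[n(n-1)(p-T)^2]^{-1}$; the dependence on $\mathbb{E}_\Sigma(\widehat{\mathcal{A}}_n^{\mathcal{E}})$ enters through the off-diagonal corrections to $\mathbb{E}_\Sigma[\tilde Y_{k,j}\tilde Y_{k,j'}]$ and through higher Gaussian moments, while $R_2$ comes from the linear projections $(p-T)\sigma_j\,\tilde Y_{l,j}$. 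The key simplification relative to the polynomial case is that the cross terms carrying factors $\sigma_j$ are now dominated by sums of the form $\sum_j e^{-2Aj}$, which are bounded; consequently the polynomial remainders $O(T^{3/2-2\alpha})$ and $O(T^{3/4-\alpha})$ are absorbed, leaving only $O(\sqrt T)$ and $O(T^2)$ in $R_1$ and $O(T^{1/4})$ and $O(T)$ in $R_2$, as in \eqref{R'_1}--\eqref{R'_2}. All bounds must be obtained uniformly over $\Sigma\in G(\mathcal{E}(A,L),\psi)$.

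Finally, for the asymptotic normality under $n(p-T)b(\psi)\asymp1$ and $\mathbb{E}_\Sigma(\widehat{\mathcal{A}}_n^{\mathcal{E}}) = O(b(\psi))$, I renormalize: $n^2(p-T)^2\Var_\Sigma(\widehat{\mathcal{A}}_n^{\mathcal{E}}) = nR_1/[(n-1)(p-T)^2] + nR_2$. The bounds \eqref{R'_1}--\eqref{R'_2} together with $b^{1/2}T^{1/4}\asymp\psi\to0$, $b\sqrt T\to0$ and $bT\to0$ show that $nR_2\to0$ and that the first term tends to $1$; hence the degenerate part dominates. It then suffices to prove a central limit theorem for $Q = [(n-1)(p-T)]^{-1}\sum_{k\neq l}\sum_j w_j^*\tilde Y_{k,j}\tilde Y_{l,j}$. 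Writing $Q$ through the martingale differences $M_l = \sum_{k<l}\sum_j w_j^*\tilde Y_{k,j}\tilde Y_{l,j}$ for the filtration $\mathcal{F}_l = \sigma(X_1,\dots,X_l)$, I would apply the martingale central limit theorem. I expect the main obstacle to be this last step: one must show that the sum of conditional variances $\sum_l \mathbb{E}[M_l^2\mid\mathcal{F}_{l-1}]$ converges in probability to the deterministic limit identified above and that the Lindeberg condition holds, both uniformly over $\Sigma$ near $\Sigma^*$. This is where the uniform smallness $\sup_j w_j^*\to0$ of the weights, the bound $T=o(p)$, and the exponential decay of $\sigma^*$ enter quantitatively, guaranteeing through fourth-moment control of the degenerate quadratic forms that no single summand is influential.
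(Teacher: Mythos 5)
Your proposal follows essentially the same route as the paper: the same two--term decomposition of $\widehat{\mathcal{A}}_n^{\mathcal{E}}-\mathbb{E}_\Sigma(\widehat{\mathcal{A}}_n^{\mathcal{E}})$ into a degenerate part (giving $R_1$) and a linear projection part (giving $R_2$), the same use of the constraint $\sum_j \sigma_j^2 e^{2Aj}\le L$ so that the geometric sums $\sum_j e^{-2Aj}=1/(e^{2A}-1)$ absorb the remainders $O(T^{3/2-2\alpha})$ and $O(T^{3/4-\alpha})$ of the polynomial case, and the same reduction of the asymptotic normality to the degenerate quadratic term after killing the linear term. The only cosmetic difference is that you propose to run the martingale CLT directly, whereas the paper invokes Theorem 1 of \cite{Hall84} and verifies its moment condition \eqref{conditionNA}; since that theorem is itself an application of the martingale CLT, the quantitative work (control of $\mathbb{E}_\Sigma(G_n^2)$ and $\mathbb{E}_\Sigma(H_n^4)$, i.e.\ your conditional-variance and Lindeberg checks) is identical.
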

Now we define the test procedure as follows,
\[
\Delta^*= \Delta^*(t) = \mathds{1} ( \widehat{\mathcal{A}}_n^{\mathcal{E}} > t).
\]

We describe next the separation rate. We stress the fact that Lemma~\ref{lem1} shows that the optimal sequence $\{\sigma^*_j\}_j$ in (\ref{parameters}) provides a Toeplitz  positive definite covariance matrix.
The sharp results are obtained under the additional assumption that $\psi \asymp \widetilde \psi$ and the lower bounds require that $n$ tends also to infinity.

\begin{theorem}
\label{theoremanalyticaltern}
Suppose that asymptotically  $\psi \to 0$ and  $p/T \asymp p/\ln(1/\psi) \to \infty$.

{\bf 1. Separation rate.} ~ {\bf Lower bound:}
$
 \text{if  } ~~  n^2 p^2 b^2(\psi) =
n^2 p^2 \cdot  A \psi^4 / (2 \ln (1/ \psi))   \longrightarrow 0 ~
$
then 
\[
\gamma = \inf\limits_{\Delta } \gamma( \, \Delta \,,G( \psi)) \longrightarrow 1,
\]
where the infimum is taken over all test statistics $\Delta$.

{\bf Upper bound:} the test procedure $\Delta^*$ defined  previously with $t>0$ has the following properties:

 \noindent Type I error probability:  if $np \cdot t \to + \infty $ then $\eta(\Delta^*) \to 0 $.

 \noindent  Type II error probability: $\text{if } ~~\quad  n^2 p^2 \,  b^2 (\psi)=n^2 p^2 \cdot  A \psi^4 /(2 \ln (1/ \psi)) \longrightarrow + \infty $ 
then, uniformly over t such that $t \leq c \cdot A^{\frac{1}{2}} \psi^2 / (2 \ln (1/ \psi))^{\frac{1}{2}} $ , for some constant $c \, ; \, 0 < c < 1$, 
\[
\beta( \, \Delta^*, G( \psi)) \longrightarrow 0.
\]

{\bf 2. Sharp asymptotic bounds.} ~ {\bf Lower bound:}  suppose that $n \to +\infty$ and that
\begin{equation}
\label{conditionsharprate}
  \quad n^2 p^2 \, b^2(\psi) \asymp 1,
 \end{equation}
then we get 
$
\inf\limits_{ \Delta: \eta(\Delta) \leq w }  \beta ( \Delta \, , G( \psi)) \geq \Phi(z_{1-w} - np b(\psi) ) +o(1),
$
where the infimum is taken over all test statistics $\Delta$ with type I error probability less than or equal to $w$ for $w \in (0,1)$.
Moreover,
\[\gamma =\inf\limits_{ \Delta }  \gamma ( \,  \Delta \,  , \psi)) \geq 2 \Phi(-n p \,\frac{b(\psi)}2) + o(1).
\]

{\bf Upper bound:} we have

\noindent Type I error probability : $\eta(\Delta^*) = 1 - \Phi(npt) + o(1)$.

\noindent
 Type II error probability :
under the condition (\ref{conditionsharprate}), we get that,
 uniformly over t,
\[
\beta( \, \Delta^* \, , G(\psi)) \leq \Phi(n p \cdot ( t - b(\psi)))+o(1).
\]
\end{theorem}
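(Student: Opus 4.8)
The plan is to prove the four components of the statement—the upper bounds on the Type I and Type II errors of $\Delta^*$, the separation-rate lower bound, and the sharp lower bound—following the architecture already used for Theorems~\ref{theo:optimalrates} and~\ref{theo:sharprates}, with the polynomial quantities replaced by the exponential parameters of~(\ref{parameters}). The upper bounds rest entirely on Proposition~\ref{prop:est2} and the asymptotic normality recorded there; the lower bounds are the genuinely new part and rely on a random-sign Bayesian prior.

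For the upper bounds I would first treat the Type I error. Under $H_0$ we have $n(p-T)\widehat{\mathcal{A}}_n^{\mathcal{E}}\to\mathcal{N}(0,1)$, so $\eta(\Delta^*)=\mathbb{P}_I(\widehat{\mathcal{A}}_n^{\mathcal{E}}>t)=1-\Phi(n(p-T)t)+o(1)$; since $p/T\to\infty$ gives $n(p-T)t=npt(1+o(1))$ and, in the sharp regime, $npt\asymp 1$ forces the correction $nTt\asymp T/p\to 0$, this equals $1-\Phi(npt)+o(1)$, and $npt\to\infty$ makes it tend to $0$. For the Type II error I would bound, for each $\Sigma\in G(\psi)$ with $m:=\mathbb{E}_\Sigma(\widehat{\mathcal{A}}_n^{\mathcal{E}})\geq b(\psi)$, the probability $\mathbb{P}_\Sigma(\widehat{\mathcal{A}}_n^{\mathcal{E}}\leq t)$. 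In the separation regime ($n^2p^2b^2(\psi)\to\infty$, $t\leq c\,b(\psi)$) Chebyshev's inequality gives $\mathbb{P}_\Sigma(\widehat{\mathcal{A}}_n^{\mathcal{E}}\leq t)\leq \Var_\Sigma(\widehat{\mathcal{A}}_n^{\mathcal{E}})/(m-t)^2\leq \Var_\Sigma(\widehat{\mathcal{A}}_n^{\mathcal{E}})/((1-c)^2m^2)$, and inserting the variance bounds~(\ref{R'_1})--(\ref{R'_2}) I would check term by term that every contribution is $o(1)$: writing $N=n(p-T)$ and using $Nb(\psi)\to\infty$, $T\asymp\ln(1/\psi)=o(N)$ and $m\geq b(\psi)$, each of the six ratios tends to $0$ (the dominant ones being $1/(Nm)^2$, $(T/N)^2$ and $T/N$). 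In the sharp regime ($Nb(\psi)\asymp 1$) I would instead invoke the asymptotic normality of Proposition~\ref{prop:est2} for those $\Sigma$ with $m=O(b(\psi))$, giving (uniformly in $t$, by continuity of $\Phi$) $\mathbb{P}_\Sigma(\widehat{\mathcal{A}}_n^{\mathcal{E}}\leq t)=\Phi(N(t-m))+o(1)\leq\Phi(np(t-b(\psi)))+o(1)$, while for $\Sigma$ with $m/b(\psi)\to\infty$ the Chebyshev bound already tends to $0$, which is below the positive constant $\Phi(np(t-b(\psi)))$; taking the supremum over $\Sigma$ yields the stated bound.

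For the separation-rate lower bound I would use the chi-square method with a random-sign prior. Set $\Sigma(\varepsilon)=T_p(\{\varepsilon_j\sigma_j^*\}_{j=1}^{T})$ with $\varepsilon_1,\dots,\varepsilon_T$ i.i.d.\ Rademacher and $\{\sigma_j^*\}$ as in~(\ref{parameters}). Since $\sum_j\sigma_j^*\lesssim T\sqrt\lambda\asymp\psi(\ln(1/\psi))^{1/2}\to 0$, the associated spectral density stays bounded away from $0$, so every $\Sigma(\varepsilon)$ is positive definite; because $|\varepsilon_j\sigma_j^*|=\sigma_j^*$, each $\Sigma(\varepsilon)$ lies in $G(\mathcal{E}(A,L),\psi)$ (this is exactly Lemma~\ref{lem1}). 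Writing $\bar P$ for the mixture and $L=d\bar P/dP_I$, the Gaussian identity
\[
\mathbb{E}_I\Big[\tfrac{dP_{\Sigma}}{dP_I}\,\tfrac{dP_{\Sigma'}}{dP_I}\Big]=\big(|\Sigma|\,|\Sigma'|\,|\Sigma^{-1}+\Sigma'^{-1}-I|\big)^{-n/2},
\]
together with $\|\Sigma(\varepsilon)-I\|_{op}\leq 2\sum_j\sigma_j^*\to 0$, gives after the second-order expansion of the log-determinants $\mathbb{E}_I[L^2]=\mathbb{E}_{\varepsilon,\varepsilon'}\exp\big(n\sum_j(p-j)\varepsilon_j\varepsilon_j'\sigma_j^{*2}(1+o(1))\big)=\prod_{j=1}^{T}\cosh\!\big(n(p-j)\sigma_j^{*2}\big)(1+o(1))$. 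Using $\log\cosh(x)\leq x^2/2$ we get $\log\mathbb{E}_I[L^2]\leq\tfrac12 n^2p^2\sum_j\sigma_j^{*4}(1+o(1))=n^2p^2b^2(\psi)(1+o(1))\to 0$, so $\chi^2(\bar P,P_I)\to 0$, the total variation distance tends to $0$, and $\gamma\to 1$. This argument uses no growth of $n$.

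The sharp lower bound is where the work concentrates and is the main obstacle. With the same prior and $u:=npb(\psi)$ (so $u^2\asymp 1$), I would show that under $P_I$ the log-likelihood ratio satisfies $\log L\to\mathcal{N}(-u^2/2,\,u^2)$. Expanding the Gaussian log-density and averaging the $\pm$ signs gives $\log L=\sum_{j=1}^T\log\cosh(\sigma_j^* S_j)+(\text{deterministic correction})+(\text{remainder})$, where $S_j=\sum_{k=1}^n\sum_i X_{ki}X_{k,i+j}$ is, under $P_I$, asymptotically $\mathcal{N}(0,n(p-j))$. Using $\log\cosh(x)=\tfrac{x^2}{2}-\tfrac{x^4}{12}+\cdots$ together with $np\sigma_j^{*2}\leq np\lambda\asymp A^{1/2}(\ln(1/\psi))^{-1/2}\to 0$, the quadratic part contributes the variance $\tfrac12 n^2\sum_j(p-j)^2\sigma_j^{*4}\to u^2$, whereas the quartic and higher parts have vanishing variance (of order $O(u^2(np\lambda)^2)\to 0$) and therefore only shift the mean; combining these with the $\log|\Sigma|$ and the $V^2$ contributions one checks that the mean converges to $-u^2/2$. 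Once this central limit theorem is in hand, the Ingster--Suslina reduction (the Bayes risk of the mixture lower-bounds the minimax risk, and a log-normal likelihood ratio yields $\Phi$-type errors) gives $\inf_{\Delta:\eta(\Delta)\leq w}\beta(\Delta,G(\psi))\geq\Phi(z_{1-w}-u)+o(1)$ and $\gamma\geq 2\Phi(-u/2)+o(1)$. The delicate points—establishing the central limit theorem rather than merely the second moment, controlling the cubic and higher terms of the $\Sigma^{-1}$-expansion, and handling the $\varepsilon$-dependence of $V^2$—are precisely what forces $n\to\infty$: the extra averaging over the $n$ independent replications drives these remainders to $0$, whereas the second-moment computation above needed none of it. The very fast decay of $\{\sigma_j^*\}$ over the exponential class is what lets all these estimates close for every $A>0$, with no analogue of the restriction $\alpha>1$ from the polynomial case.
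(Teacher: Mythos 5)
Your proposal is correct, and for three of the four components it follows the paper's own route: the Type I error via the asymptotic normality of $n(p-T)\widehat{\mathcal{A}}_n^{\mathcal{E}}$ under $H_0$; the Type II error via Chebyshev's inequality with the variance bounds (\ref{R'_1})--(\ref{R'_2}) in the regime $n^2p^2b^2(\psi)\to\infty$ and via the asymptotic normality of Proposition \ref{prop:est2} (splitting on whether $\mathbb{E}_\Sigma(\widehat{\mathcal{A}}_n^{\mathcal{E}})/b(\psi)$ is bounded) in the sharp regime; and the sharp lower bound via the random-sign prior on the diagonals of $\Sigma^*$, the positivity check of Lemma \ref{lem1}, the LAN-type expansion $\log L = u_nZ_n - u_n^2/2 + o_P(1)$, and the Ingster--Suslina reduction. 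The one place you genuinely diverge is the separation-rate lower bound ($n^2p^2b^2(\psi)\to 0 \Rightarrow \gamma\to 1$): the paper obtains this as a byproduct of the same LAN lemma, bounding $\|P_I-P_\pi\|_1$ through the Kullback--Leibler divergence and invoking the uniform integrability of $L_{n,p}$, whereas you compute the second moment $\mathbb{E}_I[L^2]$ in closed form from the Gaussian determinant identity, expand the log-determinants to second order, and bound $\log\mathbb{E}_I[L^2]\le n^2p^2b^2(\psi)(1+o(1))\to 0$, so that the $\chi^2$-distance and hence the total-variation distance vanish. Your route is more self-contained and elementary for this part (a determinant expansion suffices, and it makes explicit that no growth of $n$ is required), at the cost of an extra computation; the paper's route recycles work that is needed anyway for the sharp bound. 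Both variants rest on the same prior, the same Gershgorin positivity argument, and the same control of the third-order trace terms $tr(\Delta_U^3)$, which is where the fast decay of $\{\sigma_j^*\}$ over $\mathcal{E}(A,L)$ removes any analogue of the restriction $\alpha>1$.
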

In particular, the test procedure $\Delta^*(b(\psi)/2)$, is such that
$
\gamma ( \, \Delta^*(b(\psi)/2) \, , G(  \psi)) = 2 \Phi(-n p \,\frac{b(\psi)}2) + o(1).
$
We get the sharp minimax separation rate :
$
 \widetilde{\psi} = \Big( \ds\frac{2\ln (n^2 p^2 )}{ An^2 p^2} \Big)^{1/4 }.
$
 Remark that, in this case the  condition  $T/p \to 0$ implies that $n =o(e^p)$, which is considerably  less restrictive than the condition $n=o(p^{2\alpha -\frac 12}) $ of the previous case and allows for exponentially large $n$, e.g. $n = e^{p/2}$.

\section{Numerical implementation and extensions} \label{sec:simu}


In this section we implement the test procedure $\chi$ in (\ref{testtoeplitz*}) with empirically chosen threshold $t>0$ and study its numerical performance over two families of covariance matrices. We estimate the type I and type II errors by Monte Carlo sampling with 1000 repetitions. First, we choose $\Sigma= \Sigma(M)=[\sigma_j]_j$ ; $\sigma_j= j^{-2}/M$ under the  alternative hypothesis, for various values of $M \in \{ 2, 2.5, 3, 4, 6, 8, 16, 30, 60, 80 \}$. We implement the test statistic $ \widehat{\mathcal{A}}_n^{\mathcal{T}} $ defined in \eqref{esttetplitz} and \eqref{parameters1}, for parameters $\alpha=1, L=1$ and $\psi =\psi(M) = \Big( \sum_{j=1}^{p-1} j^{-4} \Big)^{ \frac 12} / M$. Our choice of the values for $M$ provides positive definite matrices. We denote by $A(M)$ the random variable  $n(p-T) \widehat{\mathcal{A}}_n^{\mathcal{T}}$ when  $\Sigma =\Sigma(M)$, and by $A(0)$ when $\Sigma =I$. Note that large values of $M$ give $\Sigma(M)$ with small off-diagonal entries, which is very close to the identity matrix.
\begin{center}
\begin{figure}[hbp!]
\includegraphics[width=10cm , height=6cm]{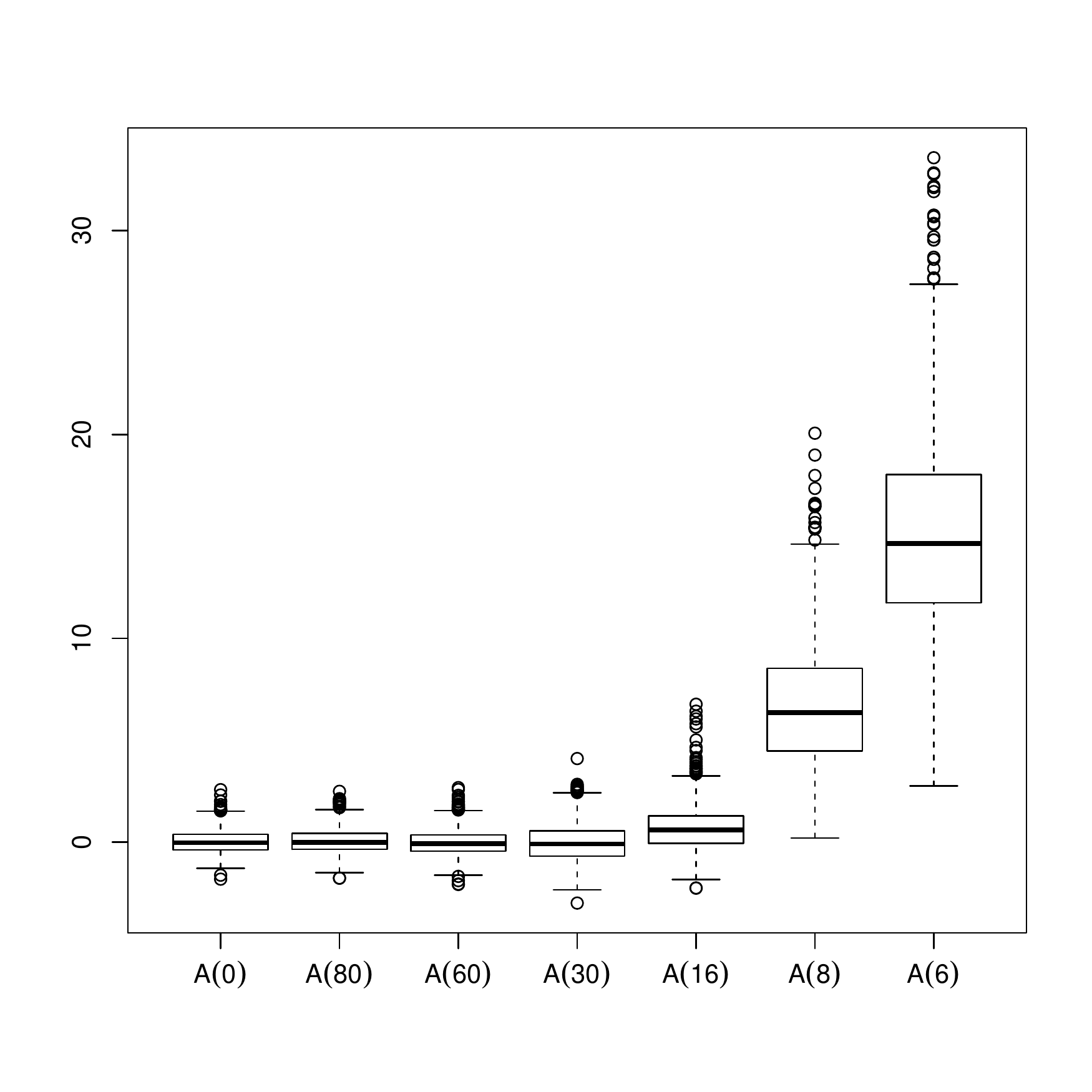} 
\caption{Distributions of $ A(M) = n(p-T)\widehat{\mathcal{A}}_n^{\mathcal{T}} $ for $I = \Sigma (0) $ and $\Sigma=\Sigma(M)$, when $p=60$ and $n=40$.}
\label{boxplotdeest}
\end{figure}
\end{center}
Figure \ref{boxplotdeest},  shows that $ n(p-T)\widehat{\mathcal{A}}_n^{\mathcal{T}} $  is distributed as a standard normal random variable, when $\Sigma =I$ and $\Sigma(M)$ close  enough to the identity. And as a non-centered normal distribution when $\Sigma(M)$ is far from the identity matrix.

To evaluate the performance of our test procedure  we compute it's power. For each value of $n$ and $p$, we estimate the 95th percentile $t$ of the distribution of $n(p-T) \widehat{\mathcal{A}}_n^{\mathcal{T}}$ under the null hypothesis $\Sigma =I$. We use  $t$  previously defined to estimate the type II error probability, and then plot the associated power. In Figure \ref{plusieursp}, we plot the  power function of our test procedure $\chi$-test as function of $\psi(M)$, for a fixed value of $n$ and different values of $p$.
 \begin{center}
 \begin{figure}
\includegraphics[width= 10cm, height=7cm]{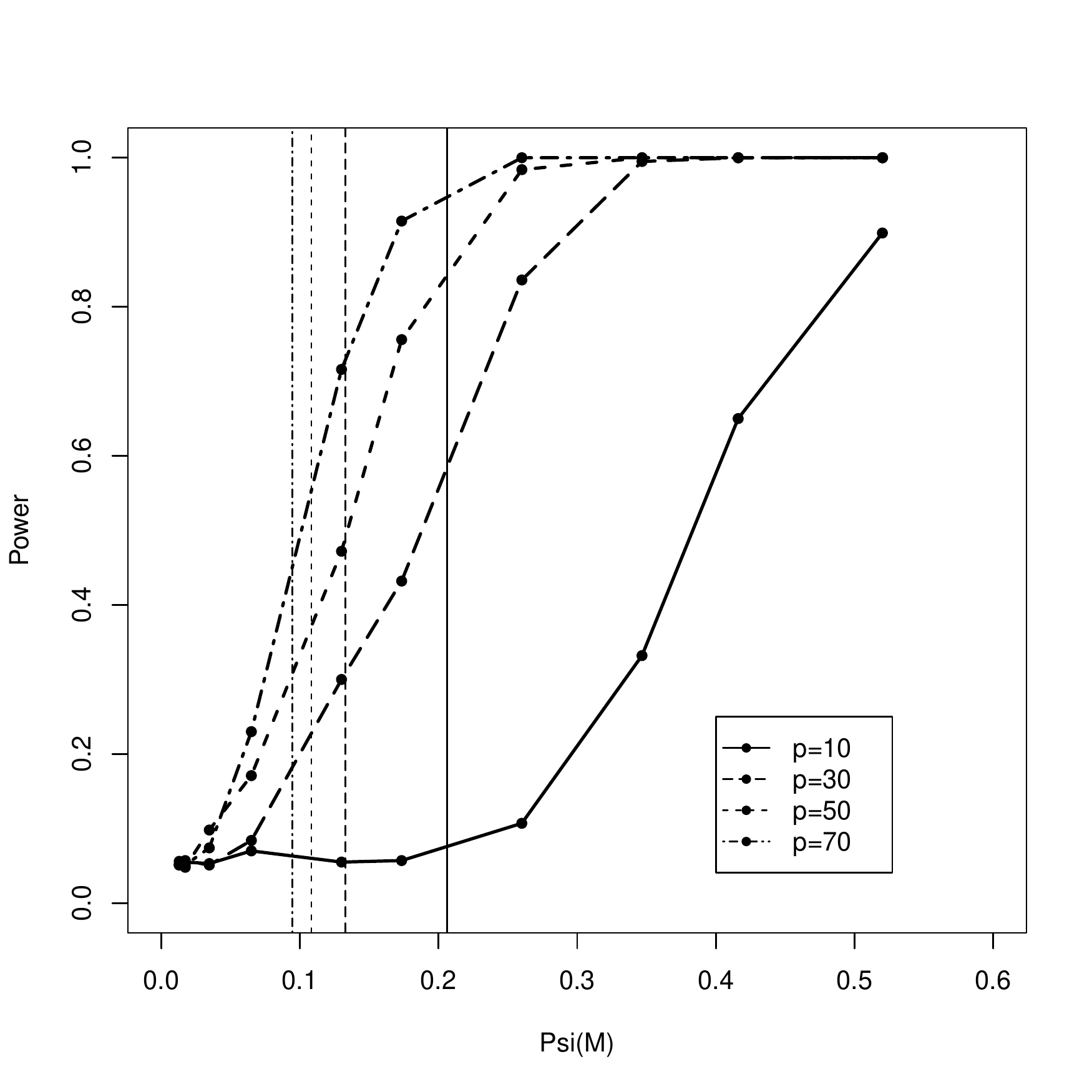}
\caption{Power curves of the $\chi$-test as function of $\psi(M)$ for $n=10 $ and $p \in \{10, 30, 50, 70\}$}
\label{plusieursp}
\end{figure}
\end{center}
\begin{center}
\begin{figure}[hb!]
\includegraphics[width= 5cm, height=5cm]{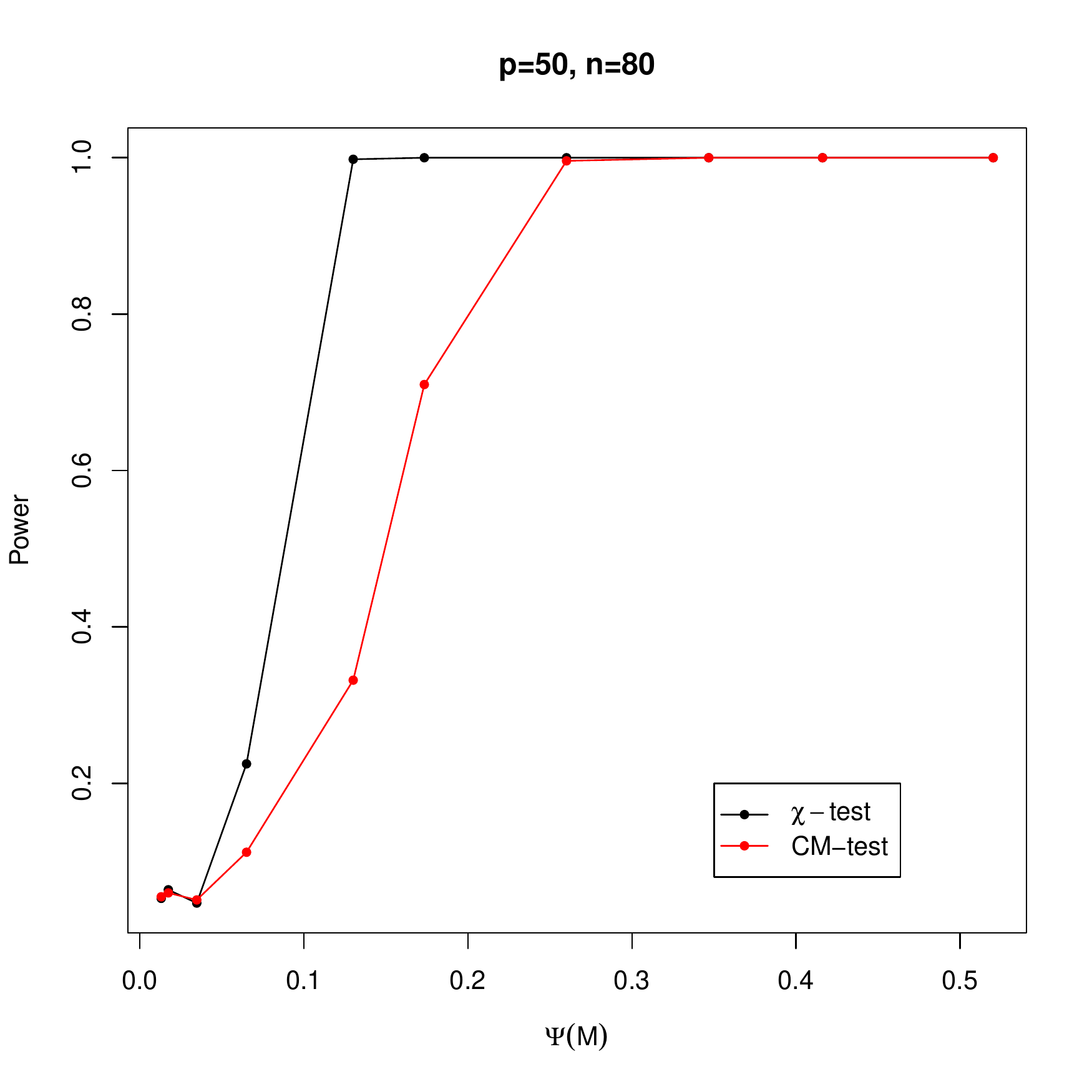}
\includegraphics[width= 5cm, height=5cm]{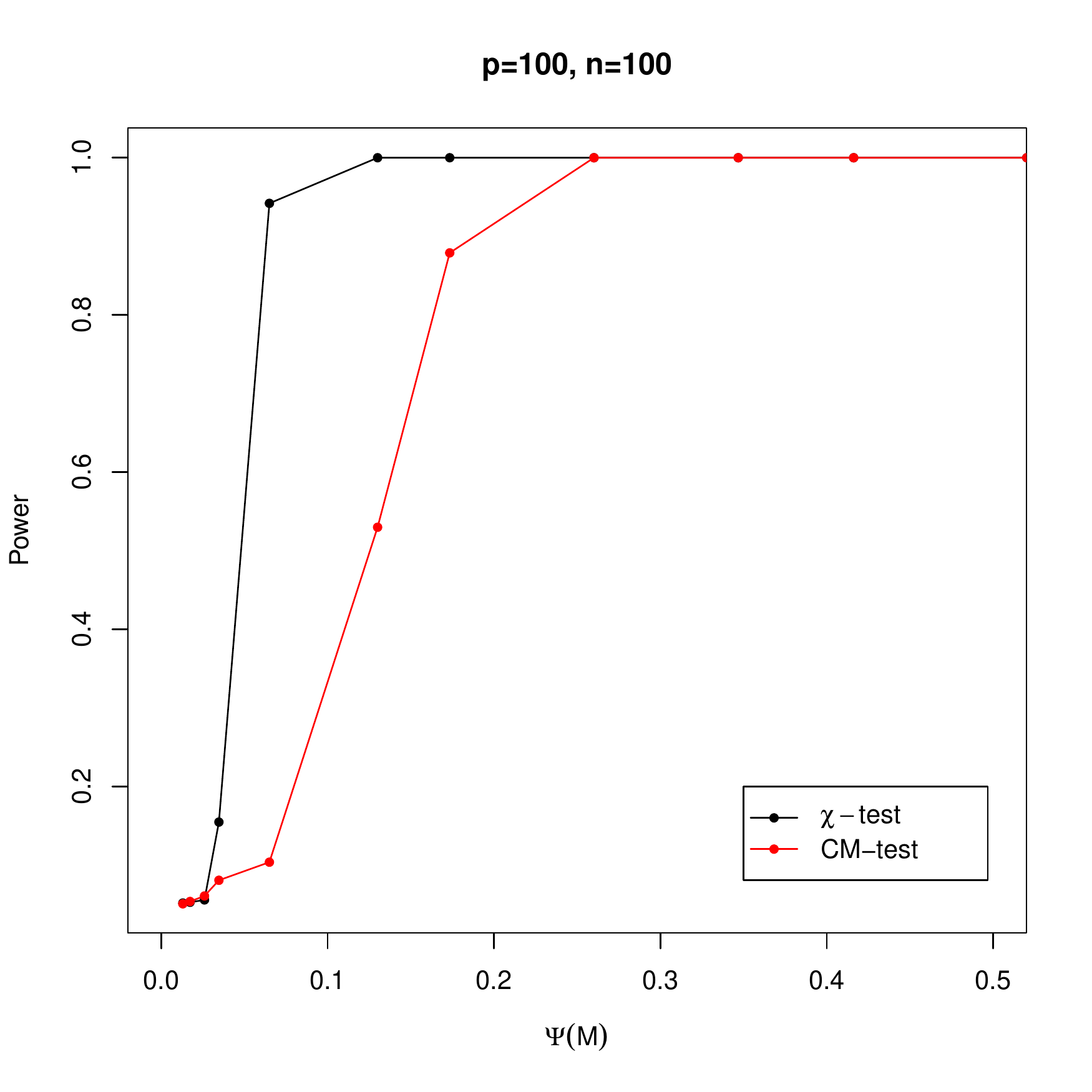}
\includegraphics[width= 5cm, height=5cm]{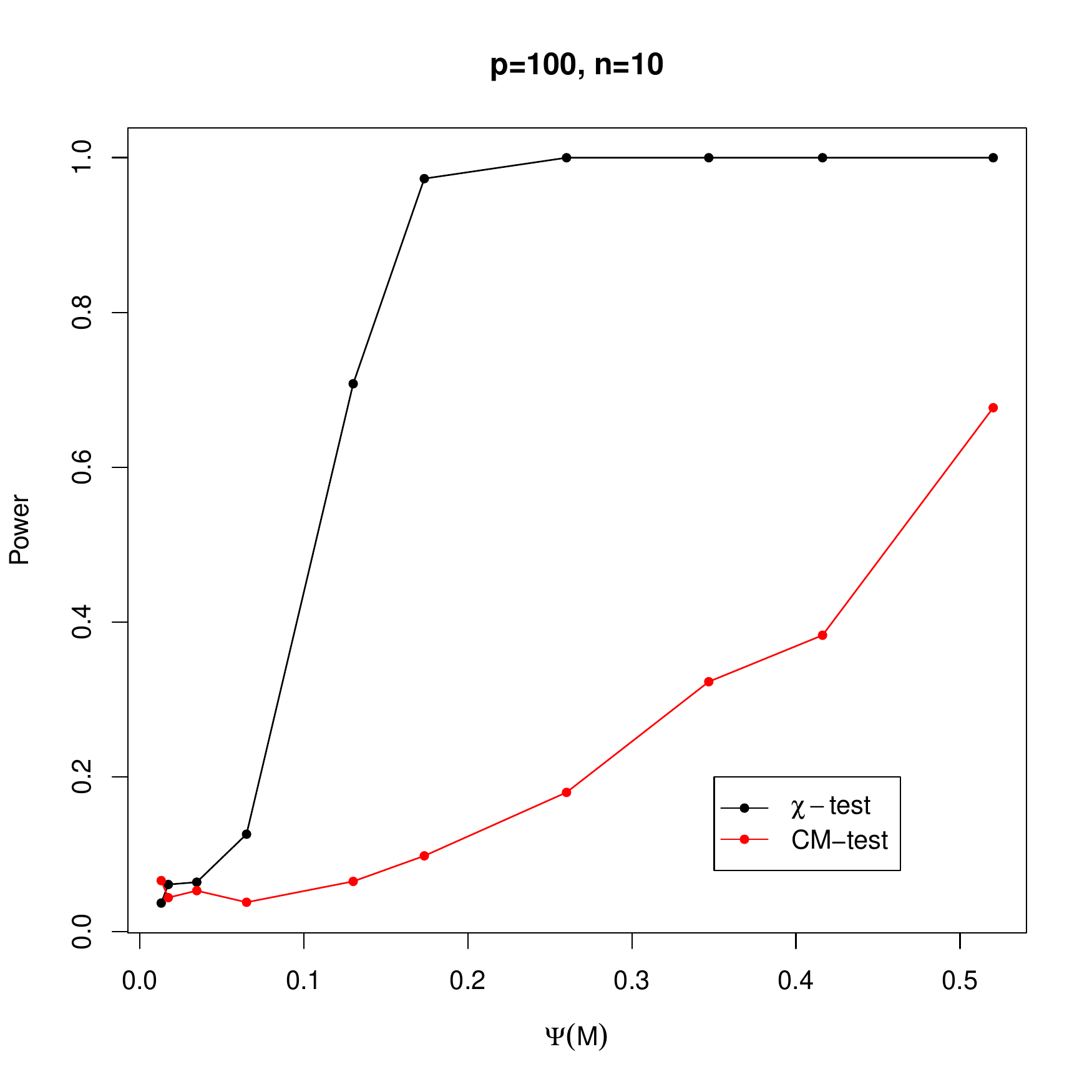}
\caption{Power curves of the $\chi$-test  and the CM-test as functions of $\psi(M)$, when the alternative consists of matrices whose elements decrease polynomially when moving away from the main diagonal }
\label{polynomiallydecrease}
\end{figure}
\end{center}
The vertical lines in figure \ref{plusieursp} represent the different $ \tilde{\psi} (n,p)$ associated to different values of $p$ and  $n=10$. We remark that, on the one hand the power grows with $\psi(M)$ for all $ p \in \{10, 30, 50, 70 \}$. On the other hand the power is an increasing function of $p$ for a fixed covariance matrix $\Sigma(M)$.

We also compare our test procedure with the one defined in  \cite{CaiMa13}. Recall that the test statistic defined by  \cite{CaiMa13} is given by:
\[
\widehat{T}_n^{CM} = \frac{2}{n(n-1)} \underset{1 \leq k < l \leq n}{\ds\sum \sum} \Big( (X_k^\top X_l)^2 -  X_k^\top X_k - X_l^\top X_l + p \Big).
\]
Note that for matrices $\Sigma \in \mathcal{T}(1, 1)$, we have  $(1/p) \| \Sigma - I \|_F^2  \sim \sum_{j=1}^{p-1} \sigma_j^2$, thus we implement   $ \widehat{T}_n^{CM}/p $ as CM-test statistic. To have fair comparison,  we estimate the 95th percentile under the null hypothesis for both tests.
Figures \ref{polynomiallydecrease}, shows that when $n$ is bigger than or equal to $p$  the powers of the $\chi$-test and the CM-test take close values. While when $n$ is smaller then $p$, the gap between the power values of the two tests is large, and the $\chi$-test is more powerful than the CM-test.

Second, we consider tridiagonal matrices under the alternative. We define  $\Sigma=\Sigma(\rho) =[\sigma_j]_j$ ; $\sigma_j= \rho \cdot \mathds{1} \{ j =1 \}$, for $\rho \in (0,1)$. In this case the parameter $\psi$ is $\psi(\rho) = \rho$, for a grid of 10 points $\rho$ belonging to the interval $(0, 0.35]$ and as previously we take $\alpha =1$ and $L=1$.
\begin{center}
\begin{figure}[ht!]
\includegraphics[width= 5cm, height=5cm]{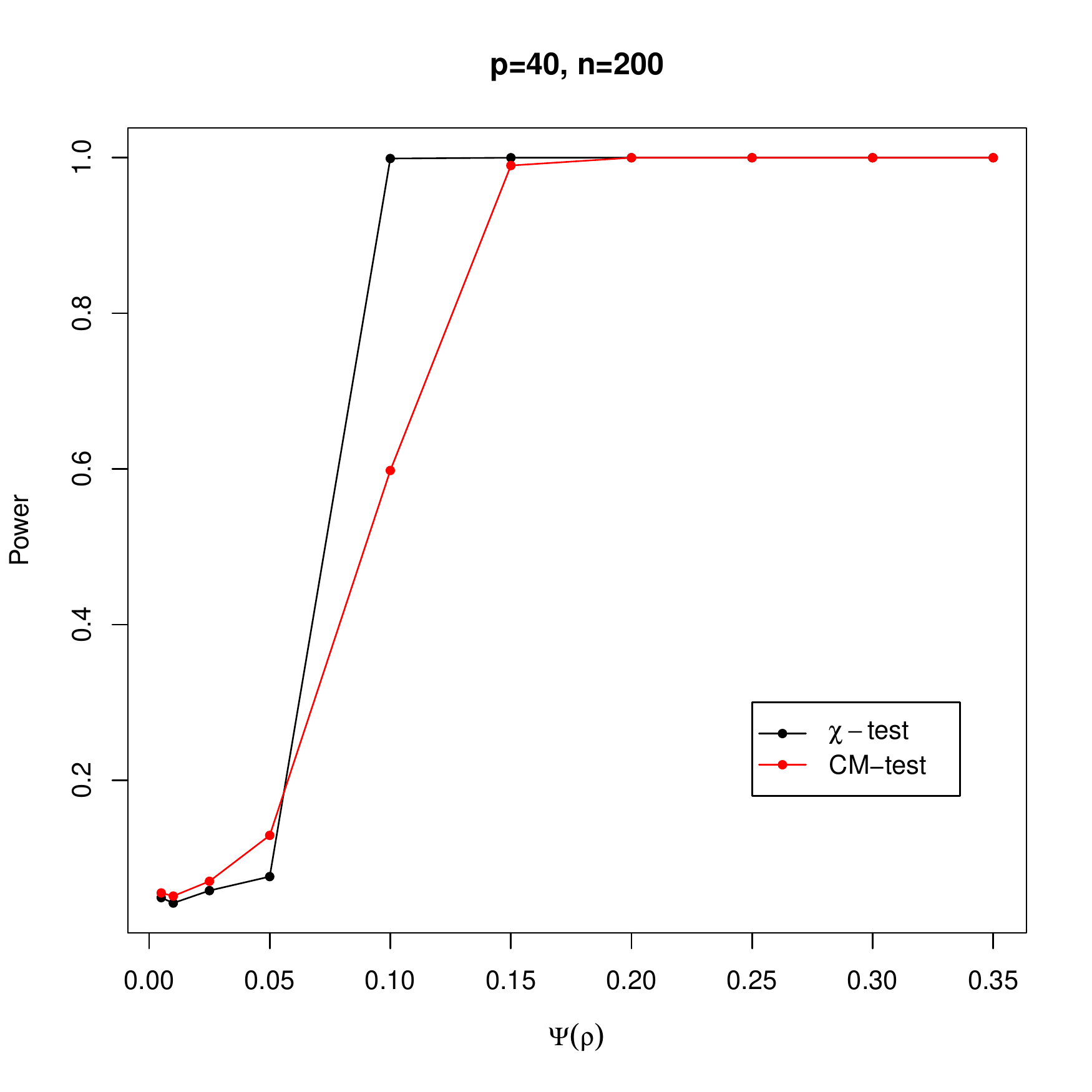}
\includegraphics[width= 5cm, height=5cm]{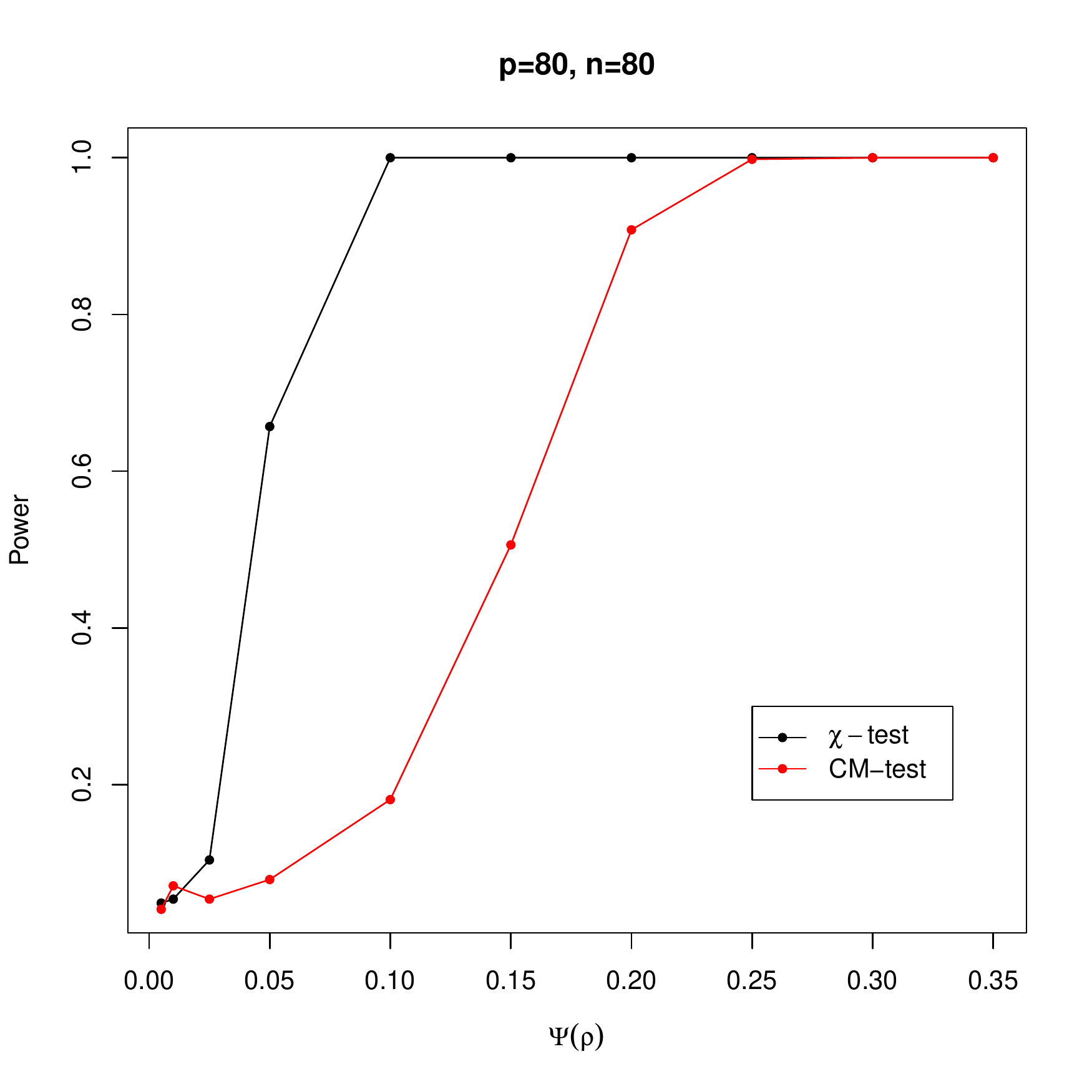}
\includegraphics[width= 5cm, height=5cm]{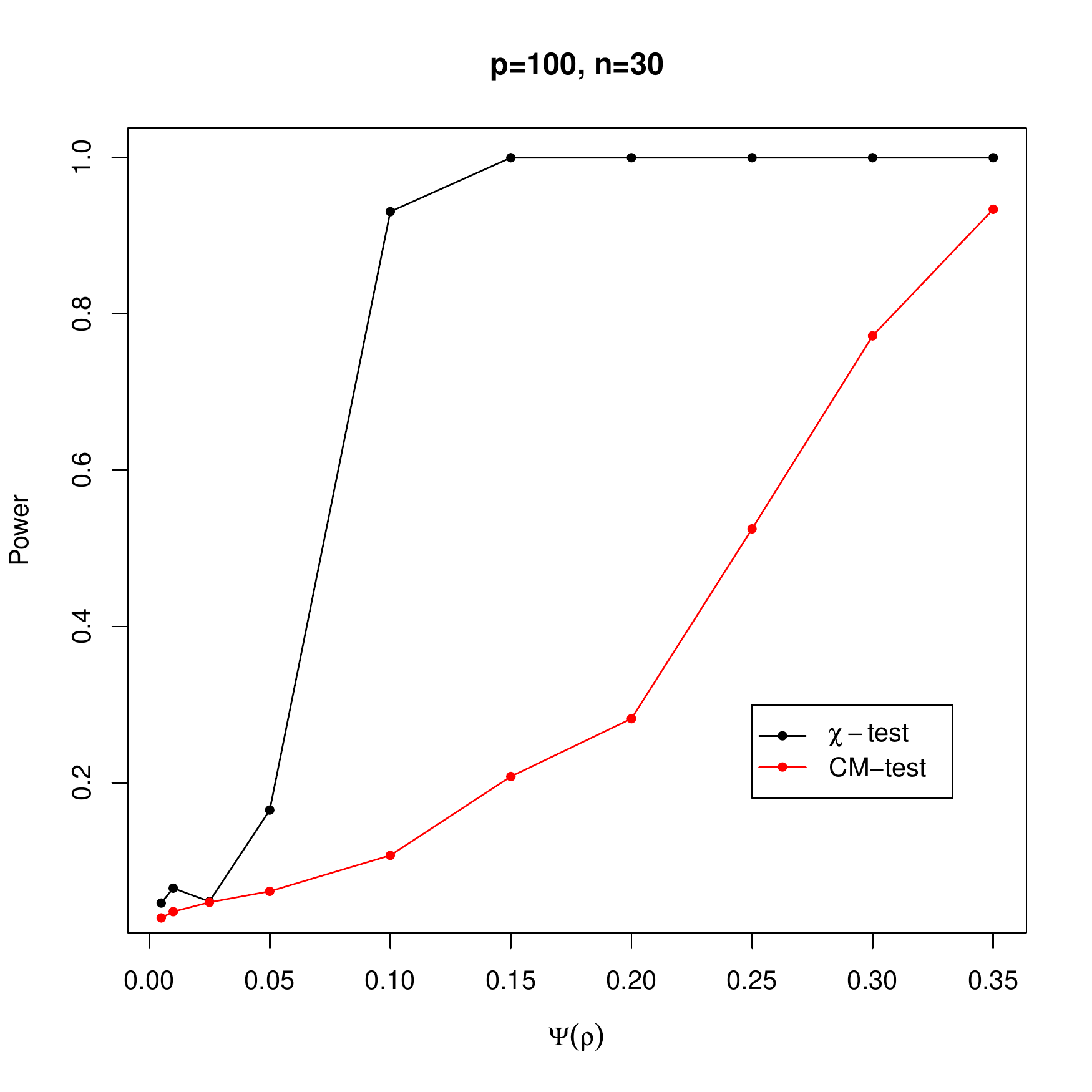}
\caption{Power curves of the $\chi$-test  and the CM-test as functions of $\psi(\rho)$, when the alternative consists of tridiagonal  matrices }
\label{tridiagonal}
\end{figure}
\end{center}
Figure \ref{tridiagonal} shows that, the $\chi$-test performs better than the U-test, in the three cases : $p$ smaller than $n$, $p$ equal $n$ and $p$ larger than $n$. Moreover,  we see that  the power curves of the $\chi$-test and the CM-test are closer, when the ratio $p/n$ is  smaller.
We expect even better results in this particular example if we use a larger value of $\alpha$, or the procedure defined by \eqref{esttoeplitz2} and \eqref{parameters}. The question arises of a test statistic free of parameters $\alpha$, respectively $A$, which is beyond the scope of this paper.


\section{Proofs}
\label{sec:proofs}

\begin{proof}[Proof of Theorems \ref{theo:optimalrates} and \ref{theo:sharprates}]
Recall the assumptions $n,\, p \to +\infty$, $\psi \to 0 $ and $T/p \asymp 1/(p \psi^{1/\alpha}) \to 0$.

\textbf{Lower bounds} :
%
In order  to show the lower bound, we  first reduce the set of parameters to a convenient parametric family. Let $\Sigma^*=T_p(\{\sigma_{k}^*\}_{k \geq 1}  )$ be the Toeplitz matrix  such that,
\begin{equation}
\label{sigma*}
 \sigma_{k}^*= \sqrt{\lambda}\left( 1- (\ds\frac{k}{T})^{2\alpha}\right)_+^{\frac{1}{2}}
 \text{ ~~~ for }  1 \leq k \leq p-1,
 \end{equation}
with $\lambda$  and $T$ are given by (\ref{parameters1}).

Let us define
$G^*$  a subset of $G ( \mathcal{T}(\alpha, L) , \psi)$ as follows
\[
G^* = \{ \Sigma^*_U : \Sigma^*_U = T_p( \{u_{k}\sigma_{k} \}_{k \geq 1} )~,  ~U \in \mathcal{U \}} ,
\]
where
\begin{equation*}
\mathcal{U} = \{U= T_p(\{u_{k}\}_{k \geq 1 }) -I_p \, \mbox{ and } \, u_{k} = \pm 1 \cdot I(k \leq T-1), \mbox{ for  } 1 \leq k \leq T-1 \}.
\end{equation*}
The cardinality of $\mathcal{U}$ is $2^{T-1}$.

From Proposition 3 in  \cite{ButuceaZgheib2014A}, we can see that if $ \alpha > 1/2 $,   for all $  U \in \mathcal{U} $, the matrix  $\Sigma^*_U $ is positive  definite, for $\psi >0$ small enough. In contrast with  \cite{ButuceaZgheib2014A}, we change the signs randomly on each diagonal of the upper triangle of $\Sigma^*$ and not of all its elements. That allows us to stay into the model of Toeplitz covariance matrices and will actually change the rates of these lower bounds.

Assume that $X_1, \dots,X_n \sim N(0, I)$ under the null hypothesis and denote by $P_I$ the likelihood of these random variables. Moreover  assume that $X_1, \dots,X_n \sim N(0, \Sigma^*_U)$ under the alternative, and we denote $P_U$ the associated likelihood. In addition let
\[
 P_{\pi} = \frac{1}{2^{T-1}} \ds\sum_{ U \in \mathcal{U}} P_U
\]
 be the average likelihood  over $G^*$.

  The problem can be reduced to the test $H_0: X_1,...,X_n \sim P_I$ against the averaged distribution $H_1: X_1,...,X_n \sim P_\pi$, in the sense that
\begin{eqnarray*}
\inf\limits_{ \chi: \eta(\chi) \leq w }  \beta ( \, \chi \, , G ( \mathcal{T}(\alpha, L) , \psi)) &=& \inf\limits_{ \chi: \eta(\chi) \leq w } ~ \sup_{\Sigma \in G ( \mathcal{T}(\alpha, L) , \psi)} \mathbb{E}_{\Sigma}(1 - \chi) \nonumber 
 \geq \inf\limits_{ \chi: \eta(\chi) \leq w } ~  \sup_{\Sigma \in G^*} \mathbb{E}_{\Sigma}(1 - \chi) \nonumber \\
 & \geq &  \inf\limits_{ \chi: \eta(\chi) \leq w }  ~ \ds\frac{1}{2^{T-1}}\mathbb{E}_{\Sigma}(1 - \chi) \nonumber 
 = \inf\limits_{ \chi: \eta(\chi) \leq w }  \mathbb{E}_{\pi}(1 - \chi) := \inf\limits_{ \chi: \eta(\chi) \leq w } \beta( \, \chi \, , \{P_{\pi}\}) 
 \end{eqnarray*}
and that
\begin{eqnarray*}
\inf\limits_{ \chi }  \gamma ( \, \chi \, , G ( \mathcal{T}(\alpha, L) , \psi)) &\geq&
\inf\limits_{ \chi }  \gamma ( \, \chi \, , \{P_\pi\})+ o(1)
\end{eqnarray*}
where, with an abuse of notation, $\beta( \, \chi \, , \{P_{\pi}\}) = \mathbb{E}_{\pi}(1 - \chi)$ and $ \gamma ( \,  \chi \, , \{P_\pi\}) = \mathbb{E}_I( \chi) + \mathbb{E}_{\pi}(1 - \chi)$.

It is therefore sufficient to show that, when $u_n \asymp 1$,
\begin{equation}\label{beta}
\inf\limits_{ \chi: \eta(\chi) \leq w }  \beta ( \chi , \{P_\pi \}) \geq \Phi( z_{1-w} -np b(\psi))) +o(1)
\end{equation}
and that
\begin{equation}\label{gamma}
\inf\limits_{ \chi }  \gamma ( \,  \chi \, , \{ P_\pi \}) \geq 2 \Phi(-n p \,\frac{b(\psi)}2) + o(1),
\end{equation}
while, for $u_n= o(1)$, we need that
\begin{equation}
\label{gammatendsto1}
\gamma ( \,  \chi \, , \{ P_\pi \} ) \to 1.
\end{equation}

\begin{lemma}\label{lemma:loglike}
Assume that $\psi \to 0$ such that $p\psi^{1/\alpha} \to \infty$ and let $f_\pi$ be the probability density associated to the likelihood $P_\pi$ previously defined. Then
\begin{equation}\label{lan}
L_{n,p}:= \log \frac{f_\pi}{f_I}(X_1,...,X_n) = u_n Z_n -\frac{u_n^2}2  +o_P(1), \mbox{ in } P_I \mbox{ probability},
\end{equation}
where $Z_n$ is asymptotically distributed as a standard Gaussian distribution and  $u_n = n p b(\psi)$ is such that either $u_n \to 0$ or $u_n \asymp 1$.
Moreover, $L_{n,p}$ is uniformly integrable.
\end{lemma}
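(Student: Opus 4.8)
\textbf{Proof proposal for Lemma~\ref{lemma:loglike}.}
The plan is to average the likelihood ratio explicitly over the independent Rademacher signs and then to read off the Gaussian fluctuation from the U-statistic already treated in Propositions~\ref{prop:esttoeplitz}--\ref{prop:asymptoticnormality}. Since the observations are i.i.d., the mixture likelihood ratio factorises over the sample, $f_\pi/f_I=\mathbb{E}_U\prod_{k=1}^n(f_U/f_I)(X_k)$, where $\mathbb{E}_U$ averages over the uniform law of the signs. Writing $\Sigma^*_U=I+B_U$ with $(B_U)_{ii'}=u_{|i-i'|}\sigma^*_{|i-i'|}$ for $i\neq i'$, each single-observation log-ratio is $-\tfrac12\log\det(I+B_U)-\tfrac12 X_k^\top((I+B_U)^{-1}-I)X_k$. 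Since $\|B_U\|\le 2\sum_j\sigma^*_j\asymp\psi^{(2\alpha-1)/(2\alpha)}\to0$ uniformly in $U$ for $\alpha>1/2$, I would expand $(I+B_U)^{-1}-I=-B_U+B_U^2-\cdots$ and $\log\det(I+B_U)=-\tfrac12\operatorname{tr}(B_U^2)+\cdots$. The only term linear in the signs is $\tfrac12\sum_k X_k^\top B_U X_k=\sum_j u_j\sigma^*_j W_j$ with $W_j:=\sum_k\sum_i X_{k,i}X_{k,i-j}$; all other contributions involve only even products of signs and are, to leading order, $U$-free. The independence of the signs then yields the key identity $\mathbb{E}_U\exp(\sum_j u_j\sigma^*_j W_j)=\prod_j\cosh(\sigma^*_j W_j)$, so that $L_{n,p}=\bar M+\sum_j\log\cosh(\sigma^*_j W_j)+(\text{remainder})$, where $\bar M$ collects the $U$-independent determinant and $B_U^2$ terms.

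I would next separate the deterministic and the fluctuating parts using $\log\cosh(x)=\tfrac{x^2}2-\tfrac{x^4}{12}+O(x^6)$ and $W_j^2=\sum_k V_{k,j}^2+\sum_{k\neq l}V_{k,j}V_{l,j}$, with $V_{k,j}=\sum_i X_{k,i}X_{k,i-j}$. In the expectation the three contributions of order $np\sum_j(\sigma^*_j)^2$ — from $-\tfrac n2\log\det(I+B_U)$, from $-\tfrac12\sum_k X_k^\top B_U^2X_k$, and from the quadratic part $\tfrac12\sum_j(\sigma^*_j)^2\sum_k V_{k,j}^2$ — cancel exactly, and the surviving deterministic mass comes from the quartic term, since $\mathbb{E}_I[-\tfrac1{12}\sum_j(\sigma^*_j)^4W_j^4]\sim-\tfrac1{12}\cdot3(np)^2\sum_j(\sigma^*_j)^4=-\tfrac12(np)^2b^2(\psi)=-u_n^2/2$, recalling $b^2(\psi)=\tfrac12\sum_j(\sigma^*_j)^4$ and $u_n=np\,b(\psi)$. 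I would then check that the fluctuations of the diagonal sum and of the quartic term are negligible: their standard deviations are of order $u_n/\sqrt n$ and $T^{-1/2}$, which vanish because $n\to\infty$ and $T\to\infty$; this is exactly where the hypothesis $n\to\infty$ is used in the sharp regime $u_n\asymp1$.

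The only surviving fluctuation is the off-diagonal part $\tfrac12\sum_j(\sigma^*_j)^2\sum_{k\neq l}V_{k,j}V_{l,j}$. Because $(\sigma^*_j)^2=2b(\psi)w^*_j$, this equals $b(\psi)\sum_j w^*_j\sum_{k\neq l}V_{k,j}V_{l,j}$, i.e.\ the unnormalised weighted U-statistic of \eqref{esttetplitz}, up to replacing the truncated range $T+1\le i\le p$ by the full range, a discrepancy of relative order $T/p=o(1)$. Writing $Z_n$ for the standardisation of $\widehat{\mathcal A}_n$, Proposition~\ref{prop:asymptoticnormality} gives $Z_n\to\mathcal N(0,1)$ under $P_I$ and Proposition~\ref{prop:esttoeplitz} gives $\Var_I=1/(n(n-1)(p-T)^2)$, so that this off-diagonal part equals $u_n Z_n+o_P(1)$ with variance $u_n^2(1+o(1))$. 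Combining these expansions yields $L_{n,p}=u_nZ_n-u_n^2/2+o_P(1)$, and when $u_n=o(1)$ the same steps give $L_{n,p}=o_P(1)$, which is all that is needed for \eqref{gammatendsto1}.

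The hard part will be the uniform control, over the whole family $G^*$, of the remainders discarded above: the third and higher orders of the inverse and log-determinant expansions, the odd-sign and off-diagonal contributions of $B_U^2$ and $B_U^3$ that survive the average $\mathbb{E}_U$ only as lower-order corrections $\tilde M_U$, and the index-range truncation. These are handled by the same moment bounds that prove Propositions~\ref{prop:esttoeplitz}--\ref{prop:asymptoticnormality}, and it is here that $\alpha>1$ enters, through the decay $\sum_j j^{2\alpha}(\sigma^*_j)^2\le L$ forcing the error sums of the type $O(T^{3/2-2\alpha})$ in \eqref{R_1}--\eqref{R_2} to vanish, together with $T/p\to0$. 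Finally, uniform integrability of $L_{n,p}$ follows from a uniform bound on $\mathbb{E}_I[L_{n,p}^2]$ (with $u_n\asymp1$ one gets $\mathbb{E}_I[L_{n,p}^2]\to u_n^2+u_n^4/4$), a byproduct of the same second-moment estimates; equivalently, since $e^{L_{n,p}}$ is a likelihood ratio with $\mathbb{E}_I e^{L_{n,p}}=1$ and converges in law to a lognormal of unit mean, Scheffé's lemma gives the uniform integrability directly.
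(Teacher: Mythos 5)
Your proposal follows essentially the same route as the paper's proof: Taylor expansion of $(\Sigma^*_U)^{-1}$ and $\log\det\Sigma^*_U$, averaging the linear-in-sign term into a product of $\cosh$'s, the $\log\cosh$ expansion whose quadratic off-diagonal part yields the U-statistic contribution $u_nZ_n$ via Propositions~\ref{prop:esttoeplitz}--\ref{prop:asymptoticnormality} and whose quartic part concentrates at $-u_n^2/2$, with the same identification of where $T/p\to 0$, $n\to\infty$ and $\alpha>1$ enter. Two minor caveats: the three quadratic contributions cancel only in expectation, leaving a centered $O_P(\sqrt{np}\,b(\psi))=o_P(1)$ fluctuation (exactly the term $G$ in the paper), and your Scheff\'e aside gives uniform integrability of $e^{L_{n,p}}$ rather than of $L_{n,p}$ itself --- but your primary second-moment bound for the latter is the argument the paper in fact uses.
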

In order to obtain (\ref{beta}) and (\ref{gamma}), we apply results in Section 4.3.1 of \cite{IngsterSuslina03} giving the sufficient condition is (\ref{lan}).

It is known that $ \gamma ( \,  \chi \, , \{ P_\pi\}) = 1 - \ds\frac{1}{2} \| P_I - P_{\pi} \|_1$ and we bound the $L_1$ norm by the Kullback-Leibler divergence
\[
\ds\frac{1}{2} \| P_I - P_{\pi} \|_1^2 \leq  K(P_I, P_{\pi}).
\]
Therefore to show (\ref{gammatendsto1}), we apply Lemma~\ref{lemma:loglike} to see that the log likelihood $\log f_\pi/f_I(X_1,...,X_n)$ is an uniformly integrable sequence.  This implies that
$K(P_I,P_\pi) = \mathbb{E}_I (\log f_\pi/f_I(X_1,...,X_n)) \to 0$.
\end{proof}

\textbf{Upper bounds} :
By the  Proposition \ref{prop:esttoeplitz}, we have that under the null hypothesis $n(p-T)\widehat{\mathcal{A}}_n \to \mathcal{N}(0,1)$ .
Then we can deduce that  the Type I error probability of $\chi^*$ has the following form :
\[
  \eta(\chi^*) = \mathbb{P} ( \widehat{\mathcal{A}}_n > t) = 1 - \Phi(npt) + o(1) .
\]
For the Type II error probability of $\chi^*$, we shall distinguish two cases, when $n^2p^2b^2(\psi)$ tends to infinity or is bounded by some finite  constant. First, assume that $\psi/ \widetilde{\psi} \to + \infty$ or, equivalently, that $n^2 p^2 b^2(\psi) \to + \infty$. Then by the Markov inequality,
\begin{eqnarray*}
   \mathbb{P}_{\Sigma} (\widehat{\mathcal{A}}_n \leq t )
    &\leq& \mathbb{P}_{\Sigma} (|\widehat{\mathcal{A}}_n - \mathbb{E}_\Sigma(\widehat{\mathcal{A}}_n)| \geq \mathbb{E}_\Sigma(\widehat{\mathcal{A}}_n) - t)
   \leq \ds\frac{\Var_{\Sigma}(\widehat{\mathcal{A}}_n)}{(\mathbb{E}_\Sigma(\widehat{\mathcal{A}}_n) - t)^2}
   \end{eqnarray*}
for all $\Sigma \in G ( \mathcal{T}(\alpha, L) , \psi)$ and $t \leq c \cdot  b(\psi)$  such that  $0<c<1$. Recall that under the alternative, we have  $\mathbb{E}_\Sigma(\widehat{\mathcal{A}}_n) \geq b(\psi)$ which gives:
\begin{equation}
\label{denominateur}
\mathbb{E}_\Sigma(\widehat{\mathcal{A}}_n) - t \geq (1-c) \mathbb{E}_\Sigma(\widehat{\mathcal{A}}_n) \geq (1-c)b(\psi).
\end{equation}
Therefore from the first part of the inequality   (\ref{denominateur}) and the variance expression of $\widehat{\mathcal{A}}_n$ under $H_1$, given in Proposition 1, we have:
   \begin{eqnarray*}
\mathbb{P}_{\Sigma} (\widehat{\mathcal{A}}_n \leq t )    &\leq& \ds\frac{R_1}{n(n-1)(p-T)^4(1-c)^2\mathbb{E}^2_\Sigma(\widehat{\mathcal{A}}_n)} + \ds\frac{R_2}{n(p-T)^2(1-c)^2\mathbb{E}^2_\Sigma(\widehat{\mathcal{A}}_n)}  := U_1 + U_2.
  \end{eqnarray*}
Let us bound from above $U_1$, using (\ref{R_1}) and the second part of the inequality (\ref{denominateur}):
\begin{eqnarray*}
U_1 & \leq &\ds\frac{1+ o(1)}{n(n-1)(p-T)^2(1-c)^2 b^2(\psi)}    + \ds\frac{O(\sqrt{T}) + O(T^{3/2 - 2\alpha})}{n(n-1)(p-T)^2 b(\psi)}+ \frac{O(T^2) }{n(n-1)(p-T)^2}. \\
 \end{eqnarray*}
We have $T^{(3/2 - 2 \alpha)} b(\psi) \asymp T^{2} b^2(\psi) \asymp \psi^{4- \frac{1}{\alpha}} = o(1) , \text{ for all } \alpha > 1/4,$
which proves that :
\[
 U_1 \leq \ds\frac{1+ o(1)}{n(n-1)(p-T)(1-c)^2b^2(\psi)} =o(1).
\]
Indeed,  $n^2(p-T)^2 b^{2}(\psi) \to + \infty $, since $n^2 p^2b^2(\psi)  \to + \infty$ and $ T/p \to 0.$

\noindent
We can check using (\ref{R_2}) that the  term $U_2$ tends to zero as well :
\[
\begin{array}{lcl}
 U_2 &\leq& \ds\frac{o(1)}{n(p-T) b(\psi)}  +   \ds\frac{O(T^{1/4})+ O(T^{3/4 - \alpha) }}{n(p-T) b^{1/2}(\psi)} + \frac{O(T) }{n (p-T)}  \\ \\
  &=& o(1) \text{ for all } \alpha > 1/4, \text{ as soon as } n^2 p^2 b^{2}(\psi) \longrightarrow + \infty.
  \end{array}
\]

Finally, when $ \psi$ is of the same order of the separation rate, i.e.  $n^2 p^2 b^2(\psi) \asymp 1 $, we may have either  $  \mathbb{E}_\Sigma(\widehat{\mathcal{A}}_n) / b(\psi) $ tends to infinity, or $\mathbb{E}_\Sigma(\widehat{\mathcal{A}}_n) = O( b(\psi)) $. In the first case  it is easy to see  that $ U_1 + U_2 \longrightarrow 0$.
In the latter the Proposition \ref{prop:asymptoticnormality} gives the asymptotic normality of $n(p-T)( \widehat{\mathcal{A}}_n -  \mathbb{E}_\Sigma( \widehat{\mathcal{A}}_n))$.
Thereby,
\begin{eqnarray*}
\sup_{\Sigma \in G ( \mathcal{T}(\alpha, L) , \psi)}\mathbb{P}_{\Sigma} (\widehat{\mathcal{A}}_n \leq t )
& \leq & \sup_{\Sigma \in G ( \mathcal{T}(\alpha, L) , \psi)}\Phi(np\cdot (t - \mathbb{E}_{\Sigma} (\widehat{\mathcal{A}}_n  ))) +o(1)\\
& \leq & \Phi (np \cdot (t - \inf_{\Sigma \in G ( \mathcal{T}(\alpha, L) , \psi)} \mathbb{E}_{\Sigma} (\widehat{\mathcal{A}}_n  ))) +o(1) \\
& = & \Phi(np  \cdot (t -b(\psi)))+o(1).
\end{eqnarray*}


\bibliographystyle{plain}



\newpage
\section{Supplementary material}

\subsection{Additional proofs for the results in Section~\ref{sec:toeplitz}}

 \begin{proof}[Proof of Lemma~\ref{lemma:loglike}]
We need to study the log-likelihood ratio:
\begin{equation*}
L_{n,p}:= \log \frac{f_\pi}{f_I}(X_1,...,X_n) = \log \mathbb{E}_U \exp\left(
- \frac 12 \sum_{k=1}^n X_k^\top ((\Sigma^*_U)^{-1}-I) X_k - \frac n2 \log \det(\Sigma^*_U)\right),
\end{equation*}
where $U$ is seen as a randomly chosen matrix with uniform distribution over the set $\mathcal{U}$.

Moreover, let us denote $\Delta_U = \Sigma^*_U - I$ which is a symmetric matrix with null diagonal. Recall that for all $  U \in \mathcal{U} $, $tr(\Delta_U) = 0$ and that $\|\Delta_U\| = O(\psi^{1-1/(2\alpha)})$. Remember also that $\sigma_k^* = 0$ for all $|k| \geq T$.

The matrix Taylor expansion gives
\begin{eqnarray*}
(\Sigma^*_U)^{-1} - I &=& -\Delta_U + \Delta^2_U + O(1) \cdot \Delta^3_U, \\
\log \det(\Sigma^*_U) &=& - \frac 12 tr(\Delta^2_U) + O(1) \cdot tr(\Delta_U^3) .
\end{eqnarray*}
On the one hand,
$
tr(\Delta_U^2) = \underset{1 \leq i \neq j \leq p}{\ds\sum } (\sigma_{|i-j|}^*)^2,
$
does not depend on $U$. Moreover,
\begin{equation}\label{trD3}
tr(\Delta_U^3) \leq \|\Delta_U\| \cdot \|\Delta_U\|^2_F = O(p \psi^{3 - \frac 1{2 \alpha}}) = O(np \psi^{2 +  \frac 1{2\alpha}} \cdot \frac{ \psi^{1 - \frac 1{\alpha} } }{n})= o(1) \quad  \text{ for } \alpha > 1.
\end{equation}
Thus we get
\begin{equation}\label{P1}
\frac n2 \log \det(\Sigma^*_U) = \frac n2 \underset{1 \leq i \neq j \leq p}{\ds\sum } (\sigma_{|i-j|}^*)^2 + o(1).
\end{equation}
On the other hand, we see that
\begin{equation}
X_k^\top \Delta_U X_k = \underset{1 \leq i,j \leq p}{\ds\sum } X_{k,i} u_{|i-j|} \sigma_{|i-j|} X_{k,j} = 2 \underset{1 \leq r <T}{\ds\sum} u_r \sigma^*_r \sum_{i=1 +r}^p X_{k,i} X_{k, i-r}
\end{equation}
and that
\begin{eqnarray}
X_k^\top \Delta_U^2 X_k &=& \underset{1 \leq i,j \leq p}{\ds\sum } X_{k,i} X_{k,j} \sum_{\substack{h=1 \\ h \not \in \{i, j \}}}^p u_{|i-h|} u_{|j-h|}
\sigma^*_{|i-h|} \sigma^*_{|j-h|} \nonumber \\
& = & \sum_{i=1}^p X_{k,i}^2 \sum_{ \substack{h=1 \\ h \neq i}}^p (\sigma^*_{|i-h|})^2 +  \underset{1 \leq i \ne j \leq p}{\ds\sum } X_{k,i} X_{k,j} \sum_{\substack{h=1 \\ h \not \in \{i, j \}}}^p u_{|i-h|} u_{|j-h|}
\sigma^*_{|i-h|} \sigma^*_{|j-h|} \nonumber \\
  &:=& S_1 + S_2 . \nonumber
\end{eqnarray}
In the term $S_2$, we change the variables $i$ and $j$ into $l=i-h$ and $m=j-h$ and due to the constraints we have  $|l| , |m| \in \{ 1, \dots , T-1 \} $ and $l \ne m $, while $h$ varies in the set $\{1 \vee ( 1-l) \vee (1-m) \, , \, p \wedge (p-l) \wedge (p-m) \}$ for each fixed pair $(l,m)$. Therefore,
\[
S_2 = \underset{ \underset{1 \leq |l| , |m| < T}{ l \neq m }}{\ds\sum }  ~ \sum_{h=1 \vee (1-l) \vee (1-m)}^{p \wedge (p-l) \wedge (p-m)} u_{|l|} u_{|m|} \sigma^*_{|l|} \sigma^*_{|m|} X_{k, l +h} X_{k, m+h} .
\]
We split the previous sums over $l \ne m$ such that sign$(l \cdot m) > 0$ and get
\[
S_{2,1} :=  \underset{1 \leq l \ne m < T}{\ds\sum } u_l u_m \sigma^*_l \sigma_m^* \Big( \sum_{h=1}^{(p-l) \wedge (p-m)}  X_{k, h+l} X_{k, h+m} + \ds\sum_{h=(1+l) \vee (1+m)}^p X_{k,h-l}X_{k,h-m} \Big)
\]
respectively, over $l,m$ of opposite signs: sign$( l \cdot m) < 0$ and get
\begin{eqnarray*}
S_{2,2} &= & 2 \underset{ 1 \leq l,m < T}{\ds\sum } \, \sum_{h=1 +m}^{p-l} u_l u_m \sigma^*_{l} \sigma_{m}^* X_{k, h+l} X_{k, h-m} \nonumber \\
&=& 2 \sum_{l=1}^{T-1} \sum_{h=1+l}^{p-l} \sigma_l^{*2}X_{k, h+l} X_{k, h-l} +  2 \underset{ 1 \leq l \ne m < T}{\ds\sum } \, \sum_{h=1 +m}^{p-l} u_l u_m \sigma^*_{l} \sigma_{m}^* X_{k, h+l} X_{k, h-m}.
\end{eqnarray*}
In conclusion, we can group terms differently and write
\begin{eqnarray}
X_k^\top \Delta_U^2 X_k &=& \underset{ 1 \leq l \ne m < T}{\ds\sum } u_l u_m \sigma^*_{l} \sigma_{m}^* \left(  \sum_{h=1}^{(p-l) \wedge (p-m)}  X_{k, h+l} X_{k, h+m} +  \ds\sum_{h=(1+l) \vee (1+m)}^p X_{k,h-l}X_{k,h-m} \right. \nonumber \\
&& \left. + 2 \sum_{h=1 +m}^{p-l}   X_{k, h+l} X_{k, h-m} \right) + \sum_{i=1}^p X_{k,i}^2  \sum_{ \substack{h=1 \\ h \neq i}}^p (\sigma^*_{|i-h|})^2 + 2 \sum_{l=1}^{T-1} \sum_{h=1+l}^{p-l} \sigma_l^{*2}X_{k, h+l} X_{k, h-l}\nonumber \\
&=& \underset{ 1 \leq l \ne m < T}{\ds\sum } u_l u_m \sigma^*_{l} \sigma_{m}^*V_{p}(l,m,k)
+ \sum_{i=1}^p X_{k,i}^2  \sum_{ \substack{h=1 \\ h \neq i}}^p (\sigma^*_{|i-h|})^2 + 2 \sum_{l=1}^{T-1} \sum_{h=1+l}^{p-l} \sigma_l^{*2}X_{k, h+l} X_{k, h-l}, \nonumber \\
\end{eqnarray}
where
$$
V_{p}(l,m,k) :=  \sum_{h=1}^{(p-l) \wedge (p-m)}  X_{k, h+l} X_{k, h+m} +  \ds\sum_{h=(1+l) \vee (1+m)}^p X_{k,h-l}X_{k,h-m}
+ 2 \sum_{h=1 +m}^{p-l}   X_{k, h+l} X_{k, h-m}.
$$
Now, let us see that:
\[
\mathbb{E}_I ( X_k^\top \Delta^3_U X_k) = \mathbb{E}_I (tr( X_k^\top \Delta^3_U X_k)) =  \mathbb{E}_I (tr( X_k X_k^\top \Delta^3_U )) =  tr(\Delta^3_U \mathbb{E}_I (X_k X_k^\top  ))= tr(\Delta^3_U)
\]
and recall (\ref{trD3}) to get
$$
\mathbb{E}_I (\sum_{k=1}^n X_k^\top \Delta^3_U X_k) = O( np  \psi^{3 - \frac 1{2 \alpha}})=o(1).
$$
Moreover, we have $\mathbb{E}_I ( X_k^\top \Delta^3_U X_k)^2 = tr^2(\Delta^3_U) + 2 tr(\Delta^6_U)$ by Proposition A.1 in \cite{ChenZZ10}, which implies that
\[
\Var_I( \sum_{k=1}^n X_k^\top \Delta^3_U X_k) =2 n tr(\Delta^6_U)
\leq 2 n \|\Delta_U\|^4 \|\Delta_U\|_F^2 = O(np \psi^{6 - 4/(2\alpha)})= o(1).
\]
Then, using Chebyshev's inequality we obtain,
\begin{eqnarray}\label{P4}
&& \sum_{k=1}^n X_k^\top \Delta^3_U X_k  = o_P(1).
\end{eqnarray}
Thus we replace (\ref{P1}) to (\ref{P4}) in $L_{n,p}$ and get
\begin{eqnarray}
L_{n,p} &=&  \log  \mathbb{E}_U \exp \left( \underset{1 \leq r < T}{\ds\sum}  u_r \sigma^*_r \sum_{i=1 +r}^p \sum_{k=1}^n X_{k,i}  X_{k, i-r}
 - \frac 12\underset{ 1 \leq l \ne m < T}{\ds\sum } u_l u_m \sigma^*_{l} \sigma_{m}^* \sum_{k=1}^nV_{p}(l,m,k) \right) \nonumber\\
& -&    \frac 12  \sum_{i=1}^p \sum_{k=1}^n X_{k,i}^2   \sum_{ \substack{h=1 \\ h \neq i}}^p (\sigma^*_{|i-h|})^2 - \underset{1 \leq l \leq T-1}{\ds\sum} \!\!\!\! \sigma_l^{*2}\sum_{h=1+l}^{p-l}  \sum_{k=1}^n X_{k, h+l} X_{k, h-l}  + \frac n4 \underset{1 \leq i \neq j \leq p}{\ds\sum } (\sigma_{|i-j|}^*)^2 + o_P(1) . \nonumber
\end{eqnarray}
 Denote  by
$  W_{l,m} : =  \sum_{k=1}^n X_{k,l} X_{k,m} $.
 Now, we evaluate the expected value with respect to the i.i.d. Rademacher variables
$u_{r}$, $u_{l} u_{m}$  for all $1 \leq r  < T$ and $1 \leq l \ne m<T$ to get
\begin{eqnarray*}
 L_{n,p} &=&
 \log \Big(\prod_{1 \leq r \leq T-1}  \cosh(\sigma^*_r \sum_{i=r+1}^p W_{i, i-r}) \Big) +  \log \Big( \underset{1 \leq l \neq m <T}{\ds\prod } \hspace{-0.2cm} \cosh \Big( \ds\frac 12 {\sigma^*_{l} \sigma^*_{m}} \sum_{k=1}^nV_{p}(l,m,k) \Big) \Big) \nonumber \\
& -&  \frac 12  \sum_{i=1}^p W_{i,i} \sum_{j:j\ne i} (\sigma^{*}_{|i-j|})^2 -  \underset{1 \leq l \leq T-1}{\ds\sum} \sigma_l^{*2}\sum_{h=1+l}^{p-l}  W_{ h+l,h-l }  + \frac n4 \underset{1 \leq i\ne j \leq p}{\ds\sum }(\sigma^{*}_{|i-j|})^2 + o_P(1). \nonumber
\end{eqnarray*}
We get that
\begin{eqnarray*}
L_{n,p}&=&  \underset{1 \leq r  \leq T-1}{\ds\sum}    \log \cosh(\sigma^*_r \sum_{i=r+1}^p W_{i, i-r})+ \underset{1 \leq l \neq m < T}{\ds \sum }  \log\cosh \Big( \ds\frac 12 {\sigma^*_{l} \sigma^*_{m}}  \sum_{k=1}^nV_{p}(l,m,k)  \Big)  \\
&- &   \frac 12  \sum_{i=1}^p W_{i,i} \sum_{j:j\ne i} (\sigma^{*}_{|i-j|})^2 -  \underset{1 \leq l \leq T-1}{\ds\sum} \sigma_l^{*2}\sum_{h=1+l}^{p-l}  W_{ h+l,h-l }   + \frac n4 \underset{1 \leq i\ne j \leq p}{\ds\sum }(\sigma^{*}_{|i-j|})^2 + o_P(1) . \nonumber\\
\end{eqnarray*}
Note that
$$
\sum_{k=1}^nV_{p}(l,m,k) =  \sum_{h=1}^{(p-l) \wedge (p-m)} \hspace{-0.5cm} W_{h+l, h+m} +  \ds\sum_{h=(1+l) \vee (1+m)}^p \hspace{-0.5cm}  W_{h-l,h-m} +  2 \!\!\!\sum_{h=1+m}^{p-l} \hspace{-0.2cm} W_{h+l, h-m}  .
$$
 We use several times the Taylor expansion  $\log \cosh (u) = \ds\frac{u^2}{2} - \frac{u^4}{12}(1 +o(1))$ for $|u| \to 0$. On the one hand, by Chebyshev's inequality,  $| \sigma_r^* \ds\sum_{i= r+1}^p W_{i, i-r} | = O_P( \ds\sqrt{\lambda np}) = O_P( \psi^{1/4 \alpha} \ds\sqrt{np b(\psi)}) = o_P(1)$, as soon as $\psi \to 0$. Then,
  \[
  \log \cosh ( \sigma_r^* \ds\sum_{i= r+1}^p W_{i, i-r} ) = \ds\frac{1}{2 } \sigma_r^{*2}(\sum_{i=r+1}^p W_{i, i-r} )^2 - \ds\frac{(1 +o_P(1))}{12} \cdot \sigma^{*4}_r (\sum_{i=r+1}^p W_{i, i-r} )^4.
  \]
  On the other hand,
  \begin{eqnarray}
   && \Big|  \ds\frac {\sigma^*_{l} \sigma^*_{m}}{2}  \Big(\sum_{h=1}^{(p-l) \wedge (p-m)} \hspace{-0.5cm} W_{h+l, h+m} +  \ds\sum_{h=(1+l) \vee (1+m)}^p \hspace{-0.5cm}  W_{h-l,h-m} +  2 \!\!\!\sum_{h=1+m}^{p-l} \hspace{-0.2cm} W_{h+l, h-m}   \Big) \Big|  \nonumber \\
  & \leq &  \ds\frac{\lambda}2  \cdot \Big| \sum_{h=1}^{(p-l) \wedge (p-m)} W_{h+l, h+m}  \Big| +  \frac{\lambda}2 \cdot  \Big|  \ds\sum_{h=(1+l) \vee (1+m)}^p \hspace{-0.5cm}  W_{h-l,h-m} \Big| + \lambda \cdot \Big| \ds\sum_{l=1+m}^{p-l} W_{h+l, h-m}  \Big|\nonumber \\[0.3cm]
  & \leq & O_p( \lambda \ds\sqrt{np}) = O_p( \psi^{1 /2 \alpha} \ds\sqrt{npb(\psi)}) = o_P(1). \nonumber
  \end{eqnarray}
Thus  we have to study now
\begin{eqnarray}
 L_{n,p} & =&  \log \frac{f_\pi}{f_I}(X_1,...,X_n) \nonumber \\
&=& \underset{1 \leq r < T}{\ds\sum}  \Big \{  \ds\frac{1}{2} \cdot  \sigma^{*2}_r (\sum_{i=r+1}^p W_{i, i-r} )^2 - \frac{(1 + o_P(1))}{12} \cdot  \sigma^{*4}_r (\sum_{i=r+1}^p W_{i, i-r} )^4  \Big\}  \nonumber  \\
& +& \ds\frac 14 \underset{1 \leq l \neq m <T }{\ds\sum }   \sigma^{*2}_l \sigma^{*2}_m \Big( \sum_{k=1}^nV_{p}(l,m,k) \Big)^2 (1 +o_P(1)) \label{LR1} \\
&-&  \frac 12  \sum_{i=1}^p W_{i,i} \sum_{j:j\ne i} (\sigma^{*}_{|i-j|})^2 -  \underset{1 \leq l \leq T-1}{\ds\sum} \sigma_l^{*2}\sum_{h=1+l}^{p-l}  W_{ h+l,h-l }   + \frac n4 \underset{1 \leq i\ne j \leq p}{\ds\sum } (\sigma^{*}_{|i-j|})^2 + o_P(1).   \nonumber
\end{eqnarray}
Let us treat each term of (\ref{LR1}) separately.
 We first decompose $(\sum_{i=r+1}^p W_{i, i-r} )^2$  as follows,
\begin{eqnarray}
A &:=& (\ds\sum_{i=r+1}^p W_{i, i-r} )^2 \nonumber \\
 &= & \underset{1 +r \leq i_1 ,i_2 \leq p}{\ds\sum  }  \Big(  \sum_{k=1}^n \sum_{\substack {l=1 \\ l \neq k}}^n  X_{k, i_1}  X_{k, i_1 -r }  X_{l, i_2}  X_{l, i_2 -r } +
 \ds\sum_{k=1}^n  X_{k, i_1}  X_{k, i_1 -r }  X_{k, i_2}  X_{k, i_2 -r } \Big) \nonumber \\
& =& \underset{1 +r \leq i_1 ,i_2 \leq p}{\ds\sum  } ~  \sum_{k=1}^n \sum_{\substack {l=1 \\ l \neq k}}^n  X_{k, i_1}  X_{k, i_1 -r }  X_{l, i_2}  X_{l, i_2 -r } \nonumber \\
& + &  \underset{1 +r \leq i_1  \neq i_2 \leq p}{\ds\sum  } ~ \ds\sum_{k=1}^n  X_{k, i_1}  X_{k, i_1 -r }  X_{k, i_2}  X_{k, i_2 -r } + \underset{1 +r \leq i \leq p}{\ds\sum} ~ \sum_{k=1}^n X_{k,i}^2 X_{k,i-r}^2 \nonumber  \\[0.3cm]
& := & A_1 + A_2 + A_3 \nonumber
\end{eqnarray}
The term $A_3$ will be taken into account as it is later on.

The dominant  term giving the asymptotic distribution is :
\begin{eqnarray}
\ds\frac 12 \underset{ 1 \leq r < T}{\ds\sum} \sigma_r^{*2} \cdot A_1  & = & \ds\frac 12 \underset{ 1 \leq r < T}{\ds\sum} \sigma_r^{*2} \underset{1 +r \leq i_1 ,i_2 \leq p}{\ds\sum  } ~  \sum_{k=1}^n \sum_{\substack {l=1 \\ l \neq k}}^n  X_{k, i_1}  X_{k, i_1 -r }  X_{l, i_2}  X_{l, i_2 -r } \nonumber \\
& =&  \ds\frac 12 \underset{ 1 \leq r < T}{\ds\sum} \sigma_r^{*2} \underset{1 + T \leq i_1 ,i_2 \leq p}{\ds\sum  } ~  \sum_{k=1}^n \sum_{\substack {l=1 \\ l \neq k}}^n  X_{k, i_1}  X_{k, i_1 -r }  X_{l, i_2}  X_{l, i_2 -r } \nonumber \\
& + &  \underset{ 1 \leq r < T}{\ds\sum} \sigma_r^{*2} \underset{1 + r \leq i_1 \leq T}{\ds\sum } ~  \underset{1 + T \leq i_2 \leq p}{\ds\sum }  ~ \sum_{k=1}^n \sum_{\substack {l=1 \\ l \neq k}}^n  X_{k, i_1}  X_{k, i_1 -r }  X_{l, i_2}  X_{l, i_2 -r } \nonumber \\
& + &  \ds\frac 12 \underset{ 1 \leq r < T}{\ds\sum} \sigma_r^{*2} \underset{1 + r \leq i_1 ,i_2 \leq T}{\ds\sum  } ~  \sum_{k=1}^n \sum_{\substack {l=1 \\ l \neq k}}^n  X_{k, i_1}  X_{k, i_1 -r }  X_{l, i_2}  X_{l, i_2 -r } \nonumber \\
& := & A_{1,1} + A_{1,2} + A_{1,3}\, , \quad \text{say}. \nonumber
\end{eqnarray}
Recall that   $ \sigma_r^{*2} = 2 w_r^{*} b( \psi) $ and then $A_{1,1} = n(p-T) \widehat{\mathcal{A}}_n \cdot n(p-T) b(\psi)$. By Proposition \ref{prop:esttoeplitz}, $n(p-T)\widehat{\mathcal{A}}_n \overset{\mathcal{L}}{\to} \mathcal{N}(0,1)$ and thus $A_{1,1}$ can be written  $u_n Z_n$ with $Z_n \overset{\mathcal{L}}{\to} \mathcal{N}(0,1)$.

Next, under $\mathbb{P}_I$ all variables in the multiple sums of $A_{1,2}$ are uncorrelated (as well as for $A_{1,3}$). Thus,
\begin{eqnarray}
\Var_I(A_{1,2}) &= & 2    \underset{ 1 \leq r< T}{\ds\sum} \sigma_r^{*4} \underset{1 + r \leq i_1 \leq T}{\ds\sum } ~  \underset{1 + T \leq i_2 \leq p}{\ds\sum }  ~ \sum_{k=1}^n \sum_{\substack {l=1 \\ l \neq k}}^n  \Var_I ( X_{k, i_1}  X_{k, i_1 -r }  X_{l, i_2}  X_{l, i_2 -r }) \nonumber \\
&= & 2    \underset{ 1 \leq r < T}{\ds\sum} \sigma_r^{*4} (p-T)(T-r)n(n-1)  \leq n^2 pT   \underset{1 \leq r < T}{\ds\sum} \sigma_r^{*4} = 2 n^2 p T b^2(\psi) \nonumber \\
& =&  2 \cdot \frac{T}{p } \cdot u_n^2 = o(u_n^2), \quad \text{ as } T/p \to 0. \nonumber
\end{eqnarray}
And, similarly,
\begin{eqnarray}
\Var_I (A_{1,3} ) &=& \ds\frac 14 \underset{ 1 \leq r < T}{\ds\sum} \sigma_r^{*4} \underset{1 + r \leq i_1 ,i_2 \leq T}{\ds\sum  } ~  2 \sum_{k=1}^n \sum_{\substack {l=1 \\ l \neq k}}^n   \Var_I(X_{k, i_1}  X_{k, i_1 -r }  X_{l, i_2}  X_{l, i_2 -r }) \nonumber \\
& \leq &\ds\frac 12 \cdot   T^2 n(n-1) b^2(\psi) = O \Big( \Big( \frac{T}{p} \Big)^2 \cdot u_n^2 \Big) = o(u_n^2).  \nonumber
\end{eqnarray}
Therefore, $A_{1,1} + A_{1,2} + A_{1,3} = u_n Z_n + o_p(u_n),$ where $Z_n \overset{\mathcal{L}}{\to} \mathcal{N}(0,1)$.
For the same reason, we have,
\begin{eqnarray*}
\Var_I( \ds\frac 12 \underset{ 1 \leq r < T}{\ds\sum}  \sigma_r^{*2} \cdot A_2 ) &=& \ds\frac{1}{4}  \underset{1 \leq r < T}{\ds\sum}  \sigma^{*4}_r \underset{1 +r \leq i_1  \neq i_2 \leq p}{\ds\sum  } \ds\sum_{k=1}^n  \Var_I( X_{k, i_1}  X_{k, i_1 -r }  X_{k, i_2}  X_{k, i_2 -r }) \nonumber \\
& \leq  & \frac 14 \cdot  n p^2 b^2(\psi) =  O \Big( \frac{1}{n} \cdot u_n^2 \Big) = o(1),
\end{eqnarray*}
as soon as $n\to \infty$ or $u_n \to 0$. We want to show that
\[
B = \frac 1{12}  \underset{1 \leq r < T}{\ds\sum} \sigma^{*4}_r (\sum_{i=r+1}^p W_{i, i-r} )^4 = \frac{u_n^2}{2} + o_p(1) .
\]
Indeed,
\begin{eqnarray}
 \mathbb{E}_I(B) & =& \frac 1{12}  \underset{1 \leq r < T}{\ds\sum} \sigma^{*4}_r \cdot  \mathbb{E}_I(\sum_{i=r+1}^p W_{i, i-r} )^4 =  \frac 1{12}  \underset{1 \leq r < T}{\ds\sum} \sigma^{*4}_r \cdot  \mathbb{E}_I(\sum_{i=r+1}^p \sum_{k=1}^n X_{k,i} X_{k,i-r})^4 \nonumber \\
 &=&  \frac 1{12}  \underset{1 \leq r < T}{\ds\sum} \sigma^{*4}_r \sum_{k=1}^n  \Big( \sum_{i=r+1}^p  \mathbb{E}_I( X_{k,i}^4 X_{k,i-r}^4) + 3 \!\!\!\! \underset{ 1 +r \leq i_1 \neq i_2 \leq p}{\ds\sum } \mathbb{E}_I( X_{k,i_1}^2 X_{k,i_1 - r}^2)  \mathbb{E}_I( X_{k ,i_2}^2 X_{k,i_2-r}^2)  \Big)\nonumber \\
 & + & \frac{3}{12}   \underset{1 \leq r < T}{\ds\sum} \sigma^{*4}_r  \underset{1 \leq k_1 \neq  k_2 \leq n}{\ds\sum } ~ \underset{ 1 +r \leq i_1 \neq i_2 \leq p}{\ds\sum } \mathbb{E}_I( X_{k_1,i_1}^2 X_{k_1,i_1 - r}^2)  \mathbb{E}_I( X_{k_2 ,i_2}^2 X_{k_2,i_2-r}^2) \nonumber \\
 &= &   \frac 3{4}  \underset{1 \leq r < T}{\ds\sum} \sigma^{*4}_r   \cdot n(p-r) +  \frac 14   \underset{1 \leq r < T}{\ds\sum} \sigma^{*4}_r   \cdot n(p-r)^2 +  \frac{1}{4}   \underset{1 \leq r < T}{\ds\sum} \sigma^{*4}_r \cdot n^2 (p-r)^2  \label{expectedvalueofB}
\end{eqnarray}
\label{E(B)}
Recall that $2b^2(\psi) = \sum_{j } \sigma_j^{*4}$, thus
\begin{equation*}
\mathbb{E}_I(B) =  \frac 32 \cdot  npb^2(\psi)(1 +o(1)) + \frac12 \cdot np^2b^2(\psi)(1 +o(1))  +\frac 12 \cdot  n^2 p^2 b^2(\psi) (1+o(1)) = \frac{u_n^2}{2} (1 +o(1)).
\end{equation*}
Moreover,
\begin{eqnarray}
\Var_I(B) &= & \ds\frac{1}{12^2}   \underset{1 \leq r < T}{\ds\sum} \sigma^{*8}_r \cdot \Var_I( (\sum_{i=r+1}^p W_{i, i-r} )^4)  \nonumber \\
& + &   \ds\frac{1}{12^2}   \underset{ 1 \leq r \neq r' < T}{\ds\sum} \sigma^{*4}_r \sigma^{*4}_{r'} \Cov_I (  (\sum_{i=r+1}^p W_{i, i-r} )^4, (\sum_{i'=r'+1}^p W_{i', i'-r'} )^4 ) . \nonumber
\end{eqnarray}
 As in the calculation of the expected value of $B$, we can see that the term of higher order is obtained when we gather the indices into distinct pairs. Thus following the same reasoning we get
 \[
\Var_I(\sum_{i=r+1}^p \sum_{k=1}^n X_{k,i} X_{k,i-r})^4 = O( n^4 p^4).
\]
Through a very technical calculation, and using similar arguments as previously, we can prove that, for $r\ne r'$,
\[
 \Cov_I((\sum_{i=r+1}^p \sum_{k=1}^n X_{k,i} X_{k,i-r})^4, (\sum_{i'=r'+1}^p \sum_{k'=1}^n X_{k',i'} X_{k',i'-r'})^4  ) = O(n^3p^4).
  \]
Thus,
\[
\Var_I(B)=  O( \lambda^4 T n^4 p^4) + O(b^4(\psi)n^3p^4) = O(\psi^{\frac{3}{\alpha}}n^4p^4b^4(\psi)) + O(\frac{1}{n} \cdot n^4p^4b^4(\psi)) = o(1).
\]
By Chebyshev's inequality we deduce that
\[
\begin{array}{lcl}
  \ds\frac{1}{12} \sum_{r=1}^{ T-1}     \sigma^{*4}_r (\sum_{i=r+1}^p W_{i, i-r} )^4 &=& \mathbb{E}_{I} \Big(  \ds \frac{1}{12} \sum_{r=1}^{ T-1}     \sigma^{*4}_r (\sum_{i=r+1}^p W_{i, i-r} )^4 \Big) + o_P(1) \\
 &  = & \ds\frac{3(1 +o(1))}{12} \cdot 2 n^2 (p-r)^2 b^2(\psi) + o_P(1) = \frac{u_n^2}{2}(1 +o_P(1)).
  \end{array}
\]
Also using that $ \mathbb{E}_I(\sum_{i=r+1}^p W_{i, i-r} )^4 =O(n^2p^2) $, we get
  \begin{eqnarray}
C &:=& \underset{1 \leq l \neq m <T }{\ds\sum } \ds\frac{\sigma^{*2}_l \sigma^{*2}_m}4  \Big(\sum_{h=1}^{(p-l) \wedge (p-m)} \hspace{-0.5cm} W_{h+l, h+m} +  \ds\sum_{h=(1+l) \vee (1+m)}^p \hspace{-0.5cm}  W_{h-l,h-m} +  2 \!\!\!\sum_{h=1+m}^{p-l} \hspace{-0.2cm} W_{h+l, h-m}   \Big)^2 \nonumber \\[0.4cm]
 &=&   O_P( \lambda^2 T^2 np  )  =o_P( \psi^{(2 - \frac 1{2 \alpha})} \cdot u_n) =o_p(1) \nonumber \quad \text{ for } \alpha > 1/4 \text{ and  since }   \psi \to 0.
  \end{eqnarray}
  Moreover,
\[
F:=- \underset{1 \leq l \leq T-1}{\ds\sum} \sigma_l^{*2}\sum_{h=1+l}^{p-l}  W_{ h+l,h-l } = O_P(\ds\sqrt{np} b(\psi)) = o_P(u_n)= o_P(1).
\]
Finally, we group  the remaining terms of (\ref{LR1}) as follows,
\begin{eqnarray*}
G &:=& \frac 12  \sum_{r=1}^{ T-1} \sigma^{*2}_r A_3 - \frac 12  \sum_{i=1}^p W_{i,i} \sum_{j:j\ne i} (\sigma^{*}_{|i-j|})^2    + \frac n4 \underset{1 \leq i\ne j \leq p}{\ds\sum }(\sigma^{*}_{|i-j|})^2  \nonumber \\
&=& \frac{1}{4 } \underset{1 \leq i\ne j \leq p}{\ds\sum } (\sigma_{|i-j|}^*)^2
 \sum_{k=1}^n X_{k,i}^2 X_{k,j}^2  - \frac 12  \underset{1 \leq i\ne j \leq p}{\ds\sum } (\sigma_{|i-j|}^*)^2 \sum_{k=1}^n X_ {k-i}^2 +   \frac n4 \underset{1 \leq i\ne j \leq p}{\ds\sum } (\sigma^{*}_{|i-j|})^2  \nonumber \\
 &=& \frac{1}{4} \underset{1 \leq i\ne j \leq p}{\ds\sum } (\sigma_{|i-j|}^*)^2 \sum_{k=1}^n ( X_{k,i}^2 -1) (X_{k,j}^2 -1) =   O_P( \ds\sqrt{np }\cdot b(\psi) ) =o_P(u_n)=o_P(1).
\end{eqnarray*}
Let us note that throughout the previous proof we also showed that the likelihood ratio $L_{n,p}$ has a variance which tends to 0, for all $n \geq 2$, when  $u_n \to 0$.
\hfill \end{proof}


\begin{proof}[ Proof of Proposition \ref{prop:esttoeplitz} ]
Under the null hypothesis, $\widehat{\mathcal{A}}_n$ is centered, and
\begin{eqnarray*}
\Var_I(\widehat{\mathcal{A}}_n) & =&
\frac 2{n(n-1)(p-T)^4} \Var_I \left( \sum_{j=1}^T w_j^* \underset{1+T \leq i_1, i_2 \leq p}{\ds\sum } X_{1, i_1}X_{1, i_1-j} X_{2, i_2}X_{2, i_2-j} \right)\\
&=& \frac 2{n(n-1)(p-T)^4}  \sum_{j=1}^T w_j^{*2} \underset{1+T \leq i_1, i_2 \leq p}{\ds\sum }  \mathbb{E}_I(X_{1, i_1}^2 X_{1, i_1-j}^2 X_{2, i_2}^2 X_{2, i_2-j}^2 )\\
&=& \frac 2{n(n-1)(p-T)^2}  \sum_{j=1}^T  w_j^{*2}
\end{eqnarray*}
Recall that $\sum_{j=1}^T w_j^{*2}  = 1/2$ to get the desired result.
Under the alternative, for all $ \Sigma \in G( \alpha , L ,\psi)$,
 we decompose  $ \widehat{\mathcal{A}}_n - \mathbb{E}_{\Sigma}(\widehat{\mathcal{A}}_n)$ into a sum of two uncorrelated terms.
\begin{eqnarray}
\label{decomposition}
 \widehat{\mathcal{A}}_n - \mathbb{E}_{\Sigma}(\widehat{\mathcal{A}}_n)
&= \ds\frac{1}{n(n-1)(p-T)^2} \underset{1 \leq k \neq l \leq n}{\ds\sum }\sum_{j=1}^T w_j^* \underset{1+T \leq i_1, i_2 \leq p}{\ds\sum } (X_{k, i_1}X_{k, i_1-j} - \sigma_j)(X_{l, i_2}X_{l, i_2-j} - \sigma_j) \nonumber \\
&  + \ds\frac{2}{n(p-T)} \sum_{k=1}^n \sum_{j=1}^T w^*_j \sum_{i_1 =T+1}^p  (X_{k, i_1}X_{k, i_1-j} - \sigma_j) \sigma_j.
\end{eqnarray}
Then the variance of $\widehat{\mathcal{A}}_n$ will be given  as a sum of two  terms,
\[
\Var_{\Sigma} (\widehat{\mathcal{A}}_n) = \ds\frac{R_1}{n(n-1)(p-T)^4} + \frac{R_2}{n(p-T)^2},
\]
where
\begin{eqnarray*}
R_1 &= & 2 \mathbb{E}_{\Sigma} \Big( \ds\sum_{j=1}^T w_j^* \underset{T+1 \leq i_1, i_2 \leq p}{\ds\sum }(X_{1, i_1}X_{1, i_1-j} - \sigma_j)(X_{2, i_2}X_{2, i_2-j} - \sigma_j) \Big)^2 ,\\
R_2 &=& 4   \mathbb{E}_{\Sigma}\Big( \ds\sum_{j=1}^T w^*_j \sum_{i_1 =T + 1}^p  (X_{1, i_1}X_{1, i_1-j} - \sigma_j) \sigma_j \Big)^2.
\end{eqnarray*}
Let us deal first with $R_1$:
\[
\begin{array}{lcl}
R_1 &= & 2 \underset{1 \leq j, j' <T}{\ds\sum } w_j^*  w_{j'}^* \underset{T +1 \leq i_1, i_3 \leq p}{\ds\sum } \mathbb{E}_{\Sigma}  [(X_{1, i_1}X_{1, i_1-j} - \sigma_j) (X_{1, i_3}X_{1, i_3-j'} - \sigma_j')]\\
 && \cdot \underset{T + 1 \leq i_2, i_4 \leq p}{\ds\sum }\mathbb{E}_{\Sigma} [(X_{2, i_2}X_{2, i_2-j} - \sigma_j)(X_{2, i_4}X_{2, i_4-j'} - \sigma_j')] \\
 &=& 2 \underset{1 \leq j, j' <T}{\ds\sum } w_j^*  w_{j'}^*  \Big( \underset{T +1 \leq i_1, i_3 \leq p}{\ds\sum }  (\sigma_{|i_1 -i_3|} \sigma_{|i_1 -i_3 -j+j'|} + \sigma_{|i_1 -i_3-j|}\sigma_{|i_1 -i_3+j'|} ) \Big)^2 \\
 &=& 2  \underset{1 \leq j, j' <T}{\ds\sum }  w_j^*  w_{j'}^* \Big( \ds\sum_{r=-p+T+1}^{p-(T+1)} (p-T-|r|) (\sigma_{|r|}\sigma_{|r-j+j'|} + \sigma_{|r-j|}\sigma_{|r+j'|}) \Big)^2
\end{array}
\]
Our aim here is to find an upper bound of $R_1$. In $R_1$ we  distinguish two cases: the first one  when  for $j=j'$ and the second one when $j \neq j'$. Let us begin with the case when $j=j'$:
\begin{eqnarray}
R_{1,1} &:=&2 \ds\sum_{j=1}^T  w_j^{*2}  \Big( \sum_{r=-p+T+1}^{p-(T+1)} (p-T -|r|) (\sigma_{|r|}^2+ \sigma_{|r-j|}\sigma_{|r+j|}) \Big)^2 \nonumber \\
 &=& 2 \ds\sum_{j=1}^T  w_j^{*2}   \Big( (p-T)(\sigma_0^2 + \sigma_j^2) + 2\sum_{r=1}^{p-(T+1)} (p-T-r) (\sigma_{r}^2+ \sigma_{|r-j|}\sigma_{|r+j|})   \Big)^2 \nonumber \\
 &= & 2 \ds\sum_{j=1}^T  w_j^{*2} \Big[ (p-T)^2  (\sigma_0^2 + \sigma_j^2)^2 + 4  \Big( \sum_{r=1}^{p-(T+1)} (p-T-r) (\sigma_{r}^2+ \sigma_{|r-j|}\sigma_{|r+j|}) \Big)^2 \nonumber \\
  && ~~~+  4(p-T)(\sigma_0^2 + \sigma_j^2)  \ds\sum_{r=1}^{p-(T+1)} (p-T-r) (\sigma_{r}^2+ \sigma_{|r-j|}\sigma_{|r+j|}) \Big] \nonumber.
 \end{eqnarray}
Let us bound from above each term on the right-hand side of the previous equality:
\begin{eqnarray}
R_{1,1,1} &:=& 2 \ds\sum_{j=1}^T  w_j^{*2}  (p-T)^2(\sigma_0^2 + \sigma_j^2)^2 = 2 (p-T)^2 \Big( \ds\sum_{j=1}^T  w_j^{*2} + 2  \ds\sum_{j=1}^T  w_j^{*2}\sigma_j^2 + \ds\sum_{j=1}^T  w_j^{*2}\sigma_j^4 \Big) \nonumber  \\[0.5cm]
&\leq & 2 (p-T)^2 \Big( \ds\frac{1}{2} + 3 L \cdot (\sup\limits_{j} w_j^*)^2  \Big) = (p-T)^2( 1 + o(1) ). \label{R_{1,1,1}}
\end{eqnarray}
Now we give an upper bound for the second term of (\ref{R_{1,1}}). Using Cauchy-Schwarz inequality we get,
\begin{eqnarray*}
R_{1,1,2} &:=& 8 \ds\sum_{j=1}^T  w_j^{*2}   \Big[ \sum_{r=1}^{p-(T+1)} (p-T-r) (\sigma_{r}^2+ \sigma_{|r-j|}\sigma_{|r+j|}) \Big]^2  \nonumber \\
&\leq&  8 (p-T)^2 \ds\sum_{j=1}^T  w_j^{*2}  \Big[ \sum_{r=1}^{p-(T+1)} \sigma_{r}^2+ (\sum_{r=1}^{p-(T+1)} \sigma_{|r-j|}^2)^{1/2} (\sum_{r=1}^{p-(T+1)} \sigma_{|r+j|}^2)^{1/2} \Big]^2 \nonumber \\
& \leq & 16 (p-T)^2 \ds\sum_{j=1}^T  w_j^{*2} \Big[ \Big( \sum_{r=1}^{p-(T+1)} \sigma_{r}^2 \Big)^2 +
 (\sum_{r=1}^{p-(T+1)} \sigma_{|r-j|}^2) (\sum_{r=1}^{p-(T+1)} \sigma_{|r+j|}^2) \Big]. \nonumber \\
\end{eqnarray*}
Again we will treat each term of the previous inequality apart. Let us see first, that if  $(r \leq j  \Longrightarrow  w^*_j \leq w^*_r)$. In addition  to the previous remark we use the class property to get:
\begin{eqnarray}
R_{1,1,2,1} &:=& \ds\sum_{j=1}^T w_j^{*2}  \Big( \sum_{r=1}^{p-(T+1)} \sigma_{r}^2 \Big)^2 \leq \ds\sum_{j=1}^T  w_j^{*2} \Big( \sum_{r=1}^{j}\sigma_{r}^2 + \sum_{r= j +1}^{p-(T+1)} \ds\frac{r^{2 \alpha}}{j^{2 \alpha}} \sigma_{r}^2  \Big)^2  \nonumber\\
&\leq & 2  \ds\sum_{j=1}^T \Big(  \sum_{r=1}^{j} w_r^* \sigma_{r}^2 \Big)^2  + 2(\sup\limits_j w_j^*)^2 \ds\sum_{j=1}^T \frac{1}{j^{4 \alpha}} \Big( \sum_{r= j +1}^{p-(T+1)} r^{2 \alpha} \sigma_{r}^2  \Big)^2  \nonumber \\
& \leq & 2 \cdot T \cdot \mathbb{E}_\Sigma^2(\widehat{\mathcal{A}}_n) +  (\sup\limits_j w_j^*)^2 \cdot k_0(\alpha, L) \label{R_{1,1,2,1}}.
\end{eqnarray}
Indeed, for $\alpha >1/4$, we have, $\sum_{j=1}^T j^{-4\alpha} \leq (4 \alpha - 1)^{-1}$ and we can take $k_0(\alpha, L) = 2 L^2 (4 \alpha - 1)^{-1}$. Using similar arguments we prove that,
\begin{eqnarray}
R_{1,1,2,2} &:= & 
\ds\sum_{j=1}^T w_j^{*2}  (\sum_{\substack{r=1 \\ |r-j| < j }}^{p-(T+1)} \sigma_{|r-j|}^2 + \sum_{\substack{r=1 \\ |r-j| \geq j }}^{p-(T+1)} \sigma_{|r-j|}^2 )(\sum_{r=1}^{p-(T+1)}\sigma_{r+j}^2) \nonumber \\
 &\leq &  \ds\sum_{j=1}^T w_j^*  (\sum_{\substack{r=1 \\ |r-j| < j }}^{p-(T+1)} w_{|r-j|}^* \sigma_{|r-j|}^2 ) (\sum_{r=1}^{p-(T+1)}\sigma_{r+j}^2)  \nonumber \\
   & +&  \ds\sum_{j=1}^T w_j^{*2} (\sum_{\substack{r=1 \\ |r-j| \geq j }}^{p-1}\ds\frac{|r-j|^{2 \alpha}}{j^{2 \alpha}} \sigma_{|r-j|}^2 )(\sum_{r=1}^{p-(T+1)}\ds\frac{(r+j)^{2 \alpha}}{j^{2 \alpha}}\sigma_{r+j}^2) \nonumber \\
& \leq &  (\sup\limits_j w_j^*) \cdot T \cdot  \mathbb{E}_\Sigma(\widehat{\mathcal{A}}_n)  \cdot L +  (\sup\limits_j w_j^*)^2 \cdot k_0(\alpha ,L).  \label{R_{1,1,2,2}}
\end{eqnarray}
The third term in $R_{1,1}$ is treated by similar arguments:
\begin{eqnarray}
R_{1,1,3} & =& (p-T) \ds\sum_{j=1}^T w_j^{*2} (\sigma_0^2 + \sigma_j^2) \sum_{r=1}^{p-(T+1)} (p-T-r)( \sigma_r^2 + \sigma_{|r-j|} \sigma_{|r+j|}) \nonumber \\
& \leq & (p-T)^2 \cdot \sup\limits_{j}(\sigma_0^2 + \sigma_j^2) \cdot \Big\{ \sum_{j=1}^T w_j^*  \sum_{r=1}^j w_r^* \sigma_r^2  + (\sup\limits_j w_j^{*2}) \sum_{j=1}^T \ds\frac{1}{j^{2 \alpha}} \sum_{r =j+1}^{p-(T+1)} r^{2\alpha}\sigma_r^2  \nonumber \\ 
&+& (\sup\limits_j w_j^{*2}) \ds\sum_{j=1}^T (\sum_{r=1}^{p-(T+1)} \sigma_{|r-j|}^2 )^{1/2} (\sum_{r=1}^{p-(T+1)}\ds\frac{(r+j)^{2 \alpha}}{j^{2 \alpha}}\sigma_{r+j}^2)^{1/2} \Big\} \nonumber \\
& \leq & 2(p-T)^2 \Big\{ O(\ds\sqrt{T}) \cdot \mathbb{E}_\Sigma(\widehat{\mathcal{A}}_n)  + (\sup\limits_j w_j^{*2}) \cdot \Big( O( \max\{ 1, T^{-2 \alpha +1} \}) +  O( \max\{ 1, T^{- \alpha +1} \} \Big) \Big\} \nonumber \\
& \leq &  2(p-T)^2 \cdot \Big\{ O(\ds\sqrt{T}) \cdot \mathbb{E}_\Sigma(\widehat{\mathcal{A}}_n) +o(1) \Big\} . \label{R_{1,1,3}}
\end{eqnarray}
Put together bounds in (\ref{R_{1,1,1}}) to (\ref{R_{1,1,3}}), we can deduce that,
\begin{equation}\label{R_{1,1}}
R_{1,1} \leq (p-T)^2(1+ o(1)) + (p-T)^2 \cdot  \mathbb{E}_\Sigma(\widehat{\mathcal{A}}_n) \cdot O(\ds\sqrt{T}) + (p-T)^2 \cdot \mathbb{E}^2_\Sigma(\widehat{\mathcal{A}}_n) \cdot O(T).
\end{equation}
Now, we will treat the case when, $j \neq j'$.
\[
\begin{array}{lcl}
R_{1,2} &:=& 2 \underset{1 \leq j \neq j' \leq T}{\ds\sum \ds\sum}  w_j^*  w_{j'}^* \Big( \ds\sum_{r=-p+T+1}^{p-(T+1)} (p-|r|) (\sigma_{|r|}\sigma_{|r-j+j'|} + \sigma_{|r-j|}\sigma_{|r+j'|}) \Big)^2  \\
& \leq &   4 (p-T)^2 \underset{1 \leq j \neq j' \leq T}{\ds\sum \ds\sum}  w_j^*  w_{j'}^* \Big[ \Big( \ds\sum_{r=-p+T+1}^{p-(T+1)}  \!\!\!\! |\sigma_{|r|}\sigma_{|r-j+j'|}| \Big)^2
 +  \Big( \ds\sum_{r=-p+T+1}^{p-(T+1)} \!\!\!\!  |\sigma_{|r-j|}\sigma_{|r+j'|}| \Big)^2  \Big].
\end{array}
\]
These last two terms are treated similarly, so let us deal with the first one.
By  using the same arguments as previously, we have 
\begin{eqnarray}
 R_{1,2,2} 
 & :=&  \underset{1 \leq j \neq j' \leq T}{\ds\sum } w_j^*  w_{j'}^* \Big( |\sigma_{|j'-j| }| + \ds\sum_{ \substack{r=-p+T+1 \\ r \neq 0}}^{p-(T+1)}  |\sigma_{|r|}\sigma_{|r-j+j'|}| \Big)^2 \nonumber \\
& \leq &  2 \underset{1 \leq j \neq j' \leq T}{\ds\sum } w_j^*  w_{j'}^*
 \sigma_{|j'-j| }^2 + 4\underset{1 \leq j \neq j' \leq T}{\ds\sum } w_j^*  w_{j'}^*  (\ds\sum_{r=1}^{p-(T+1)}  \sigma_{r}^2) (\ds\sum_{ \substack{r=-p+T+1 \\ r \neq 0}}^{p-(T+1)} \sigma_{|r-j+j'|}^2 ) .
 \end{eqnarray}
 We decompose the sum over $j\ne j'$ over sets where $\{|j'-j| \leq j \}$ and $\{|j'-j| > j\}$ and use $1 \leq |j'-j|^{2\alpha}/j^{2 \alpha}$ over the later, then similarly for sums over $r$:
 \begin{eqnarray}
  R_{1,2,2} & \leq & 2 \underset{\underset{|j'-j| < j}{1 \leq j \neq j' \leq T}}{\ds\sum } w_{j'}^*  w_{|j'-j|}^*  \sigma_{|j'-j| }^2  + 2 \underset{\underset{|j'-j| > j}{1 \leq j \neq j' \leq T}}{\ds\sum } w_j^*  w_{j'}^* \frac{|j'-j|^{2 \alpha}}{j^{2 \alpha}} \sigma_{|j'-j| }^2 +4 \underset{1 \leq j \neq j' \leq T}{\ds\sum }   \Big( \ds\sum_{r=1}^{j}   w_r^* \sigma_{r}^2 \nonumber \\
 & + &    w_j^*  \sum_{r=j +1 }^{p-(T+1)}  \ds\frac{r^{2 \alpha}}{j^{2 \alpha}}\sigma_{r}^2 \Big)   \Big(\ds\sum_{\substack{r=-p+T+1 \\ |r-j+j'| < j' }}^{p-(T+1)} \hspace{-0.2cm}  w_{|r-j+j'|}^* \sigma_{|r-j+j'|}^2 + w_{j'}^* \hspace{-0.2cm} \ds\sum_{\substack{r=-p+T+1 \\ |r-j+j'| \geq j'}}^{p-(T+1)} \ds\frac{|r-j+j'|^{2\alpha}}{(j')^{ 2 \alpha}} \sigma_{|r-j+j'|}^2 \Big) \nonumber \\
  &\leq &  4 \cdot (\sup\limits_{j} w^*_j) \cdot T \cdot  \mathbb{E}_\Sigma(\widehat{\mathcal{A}}_n) + 4L \cdot (\sup\limits_{j} w^*_j)^2 \cdot O(\max \{1, T^{-2 \alpha +1} \})  + O(T^2) \cdot  \mathbb{E}_{\Sigma}^2(\widehat{\mathcal{A}}_n) \nonumber   \\
  &+ & \!\!\! 16L \cdot (\sup\limits_j w_j^*)  \cdot  T   \cdot O(\max \{1, T^{-2 \alpha +1} \})  \cdot  \mathbb{E}_\Sigma(\widehat{\mathcal{A}}_n)  +   16L^2 \cdot (\sup\limits_j w_j^*)^2 \cdot O(\max \{1, T^{-4 \alpha +2 } \} ) . \nonumber
\end{eqnarray}
As consequence, for all $\alpha > 1/4$,
\begin{equation}
 \label{R_{1,2}}
R_{1,2} \leq (p-T)^2 \{\mathbb{E}_{\Sigma}(\widehat{\mathcal{A}}_n) \cdot  O(\ds\sqrt{T}) + \mathbb{E}_{\Sigma}^2(\widehat{\mathcal{A}}_n) \cdot O(T^2) + \mathbb{E}_{\Sigma}(\widehat{\mathcal{A}}_n) \cdot O(T^{3/2 - 2 \alpha}) +o(1)   \} .
\end{equation}
Finally put together (\ref{R_{1,1}}) and (\ref{R_{1,2}}) to get (\ref{R_1}).
In order to find an upper bound for the variance of $\widehat{\mathcal{A}}_n$ we still have to bound from above $R_2$.
\[
\begin{array}{lcl}
R_2 &=& 4  \underset{1 \leq \, j,j' <T}{\ds\sum } w^*_j w^*_{j'}\sigma_j \sigma_{j'} \underset{T+1 \leq i_1, i_2 \leq p}{\ds\sum }
 \mathbb{E}_{\Sigma} [(X_{i, i_1}X_{i, i_1-j} - \sigma_j)(X_{i, i_2}X_{i, i_2-j'} - \sigma_{j'})] \\
&=& 4   \underset{1 \leq \, j,j' <T}{\ds\sum }  w^*_j w^*_{j'}\sigma_j \sigma_{j'} \underset{T+1 \leq i_1, i_2 \leq p}{\ds\sum }
(\sigma_{|i_1 -i_2|} \sigma_{|i_1 -i_2 -j+j'|} + \sigma_{|i_1 -i_2-j|}\sigma_{|i_1 -i_2+j'|} ) \\
& =& 4  \underset{1 \leq \, j,j' <T}{\ds\sum } w^*_j w^*_{j'}\sigma_j \sigma_{j'}   \ds\sum_{r=-p+T+1}^{p-(T+1)} (p-T-|r|) (\sigma_{|r|}\sigma_{|r-j+j'|} + \sigma_{|r-j|}\sigma_{|r+j'|}).
\end{array}
\]
Let us begin by the first case when $j=j'$. It is easily seen that,
\begin{eqnarray}
R_{2,1} &:=& 4\ds\sum_{j=1}^T  w^{*2}_j \sigma_j^2  \sum_{r=-p+T+1}^{p-(T+1)} (p-T-|r|)(\sigma_{|r|}^2+ \sigma_{|r-j|}\sigma_{|r+j|})\nonumber \\
&\leq &8L \cdot p \cdot (\sup\limits_j w_j^* ) \cdot \mathbb{E}_{\Sigma}(\widehat{\mathcal{A}}_n)  \label{R_{2,1}}
\end{eqnarray}
While, when $j \neq j'$, we can prove that,
\begin{eqnarray}
&R_{2,2}& := 4 \underset{ 1 \leq j \neq  j' \leq T}{\ds\sum  \sum } w^*_j w^*_{j'}\sigma_j \sigma_{j'}  \ds\sum_{r=-p+T+1}^{p-(T+1)} (p-T-|r|) (\sigma_{|r|}\sigma_{|r-j+j'|} + \sigma_{|r-j|}\sigma_{|r+j'|})  \nonumber \\
& \leq & \hspace{-0.4cm}  4 \Big( \underset{ 1 \leq j \neq  j' \leq T}{\ds\sum  \sum } w^*_j w^*_{j'}\sigma_j^2 \sigma_{j'}^2 \Big)^{\frac 12}
\Big(\underset{ 1 \leq j \neq  j' \leq T}{\ds\sum  \sum } w^*_j w^*_{j'}
\Big( \ds\sum_{r=-p+T+1}^{p-(T+1)} (p-T-|r|) (\sigma_{|r|}\sigma_{|r-j+j'|} + \sigma_{|r-j|}\sigma_{|r+j'|} )  \Big)^2  \Big)^{\frac{1}{2}}  \nonumber \\
&\leq  & \hspace{-0.4cm}
4 \, \mathbb{E}_{\Sigma}(\widehat{\mathcal{A}}_n) \cdot (R_{1,2})^{1/2}.
\nonumber
\end{eqnarray}
We use the bound obtained in (\ref{R_{1,2}}) to  deduce that:
\begin{equation}
\label{R_{2,2}}
R_{2,2} \leq (p-T) \left( \mathbb{E}^2_{\Sigma}(\widehat{\mathcal{A}}_n) \cdot O(T)   +   \mathbb{E}^{3/2}_{\Sigma}(\widehat{\mathcal{A}}_n) \cdot (O(T^{1/4}) + O(T^{3/4 - \alpha})) +  \mathbb{E}_{\Sigma} (\widehat{\mathcal{A}}_n) \cdot o(1) \right).
\end{equation}
Put together (\ref{R_{2,1}}) and (\ref{R_{2,2}}) to get (\ref{R_2}).
\end{proof}


\begin{proof}[Proof of Proposition \ref{prop:asymptoticnormality}]
Assume that $n (p-T) \cdot \mathbb{E}_{\Sigma}(\widehat{\mathcal{A}}_n)  \asymp 1 $, to prove the asymptotic normality of $n (p-T) \cdot \widehat{\mathcal{A}}_n $, we use the decomposition (\ref{decomposition}) of the test statistic. first let us show that,
\[
\widehat{\mathcal{A}}_{n,1} := 2 \sum_{k=1}^n \sum_{j=1}^T w^*_j \sum_{i_1 =T + 1}^p  (X_{k, i_1}X_{k, i_1-j} - \sigma_j) \sigma_j \stackrel{P}{\longrightarrow} 0
\]
By Markov inequality we have, $\forall \varepsilon >0$,
\begin{eqnarray}
\mathbb{P}_{\Sigma} \Big( |2 \sum_{k=1}^n \sum_{j=1}^T w^*_j \sum_{i_1 =T +1}^p  (X_{k, i_1}X_{k, i_1-j} - \sigma_j) \sigma_j | > \varepsilon) 
& \leq & \frac{n\cdot R_2}{\varepsilon^2} \nonumber
\end{eqnarray}
According to (\ref{R_2}), and under the assumption that $n p \cdot \mathbb{E}(\widehat{\mathcal{A}}_n) \asymp 1$, we can see that,
\begin{eqnarray}
n \cdot R_2 & \leq &  n \cdot (p-T)  \{\mathbb{E}_{\Sigma} (\widehat{\mathcal{A}}_n) \cdot o(1)  +  \mathbb{E}^{3/2}_{\Sigma}(\widehat{\mathcal{A}}_n ) \cdot( O(T^{1/4})+ O(T^{3/4 - \alpha} )) +\mathbb{E}^2_{\Sigma}(\widehat{\mathcal{A}}_n) \cdot O(T)  \} \nonumber \\
& \leq & o(1) + O \Big( \ds\frac{T^{1/4} + T^{3/4 - \alpha}}{\sqrt{n(p-T)}} \Big) + O \Big( \frac{T}{n(p-T)} \Big)  =o(1) \quad \text{ since } T/p \longrightarrow 0 \text{ and for all } \alpha > 1/4. \nonumber
\end{eqnarray}
Which involves by Slutsky theorem that for proving the asymptotic normality it is sufficient to show that,
\begin{equation}
\label{convinlaw}
\widehat{\mathcal{A}}_{n,2} := \ds\frac{1}{n(p-T)} \underset{1 \leq k \neq l \leq n}{\ds\sum }\sum_{j=1}^T w_j^* \underset{T +1 \leq i_1, i_2 \leq p}{\ds\sum } (X_{k, i_1}X_{k, i_1-j} - \sigma_j)(X_{l, i_2}X_{l, i_2-j} - \sigma_j) \stackrel{L}{\longrightarrow} N(0,1)
\end{equation}
In order to prove this previous convergence, we are led to apply theorem 1 of \cite{Hall84}. This result is an application of the more general theorem of asymptotic normality for martingale differences, see e.g. \cite{shiryaev96}. $ \widehat{\mathcal{A}}_{n,2}$ is a centered, 1-degenerate, U-Statistic of second order, with kernel $H_n(X_1, X_2)$ defined by,
\[
H_n(X_1, X_2) := \ds\frac{1}{n(p-T)} \sum_{j=1}^T w_j^* \underset{T+1 \leq i_1, i_2 \leq p}{\ds\sum } (X_{1, i_1}X_{1, i_1-j} - \sigma_j)(X_{2, i_2}X_{2, i_2-j} - \sigma_j)
\]
 Therefore we should check that $ \mathbb{E}_{\Sigma}(H^2_n(X_1,X_2)) < + \infty $ and
 \begin{equation}
 \label{conditionNA}
 \ds\frac{\mathbb{E}_{\Sigma}(G^2_n(X_1, X_2)) + n^{-1} \mathbb{E}_{\Sigma}(H_n^4(X_1, X_2))}{\mathbb{E}_{\Sigma}^2(H_n^2(X_1, X_2))} \longrightarrow 0
 \end{equation}
where $ G_n(x,y):= \mathbb{E} (H_n(X_1,x)H_n(X_1,y)) $, for $x,y \in \mathbb{R}^p$.
The proof of \eqref{conditionNA} is given separately hereafter.

The asymptotic normality under $\Sigma=I$ (the null hypothesis) is only simpler as $\sigma_j=0$ for all $j\geq 1$, for $n,\, p \to \infty$. However, under the null hypothesis we prove separately (hereafter) that
\begin{equation} \label{ANH0}
n(p-T) \widehat{\mathcal{A}}_{n} \rightarrow \mathcal{N}(0,1), \mbox{ for } p \to \infty \mbox{ and for any fixed } n \geq 2 . 
\end{equation}
\end{proof}


\begin{proof}[Proof of \eqref{conditionNA}]
To show \eqref{conditionNA}, we first calculate $G_n(x,y)$ and $\mathbb{E}_{\Sigma}(H_n^2(X_1, X_2))$. That is,
\begin{eqnarray}
G_n(x,y) &=& \ds\frac{1}{n^2 (p-T)^2} \underset{1 \leq j_1, j_2<T}{\ds\sum } w_{j_{1}}^* w_{j_{2}}^* \ds\sum_{r= -p+T +1}^{p-(T+1)} (p -T-|r|)(\sigma_{|r|}\sigma_{|r- j_1 + j_2|} + \sigma_{|r- j_1|}\sigma_{|r+ j_2|}) \nonumber \\
 && \underset{1 \leq i_1, i_2 \leq p}{\ds\sum } (x_{i_1} x_{i_1 - j_1}- \sigma_{j_1})(y_{ i_2}y_{ i_2-j_2} - \sigma_{j_2})
\end{eqnarray}
Note that, under the assumption $n p \cdot \mathbb{E}(\widehat{\mathcal{A}}_n) \asymp 1$, $\alpha > 1/4 $, $p \psi^{1/ \alpha} \to + \infty$ and using (\ref{R_1}), we have,
\[
\mathbb{E}_{\Sigma}(H_n^2(X_1, X_2)) = \ds\frac{1 + o(1)}{2 n^2 }
\]
Now, let us verify that, uniformly over $\Sigma $,
\begin{equation}
\label{conditionNA1}
\mathbb{E}_{\Sigma}(G^2_n(X_1, X_2)) / \mathbb{E}_{\Sigma}^2(H_n^2(X_1, X_2)) = o(1).
\end{equation}
We write
\begin{eqnarray}
 && \ds\frac{\mathbb{E}_{\Sigma}(G^2_n(X_1, X_2))}{\mathbb{E}_{\Sigma}^2(H_n^2(X_1, X_2))} = 4 n^4 \cdot  \mathbb{E}_{\Sigma}(G^2_n(X_1, X_2)) \nonumber \\
& = &\ds\frac{4}{(p-T)^4} \underset{1 \leq j_1, j_2,j_3,j_4 <T}{\ds\sum }  w_{j_{1}}^* w_{j_{2}}^* w_{j_{3}}^* w_{j_{4}}^* \underset{-p+T+1\leq r_1, r_2 \leq p-(T+1)}  {\ds\sum } (p -T-|r_1|) (p-T -|r_2|) \nonumber \\[0.2cm]
&&  \cdot  (\sigma_{|r_1|}\sigma_{|r_1- j_1 + j_2|} + \sigma_{|r_1- j_1|}\sigma_{|r_1+ j_2|}) (\sigma_{|r_2|}\sigma_{|r_2- j_3 + j_4|} + \sigma_{|r_2- j_3|}\sigma_{|r_2+ j_4|}) \nonumber \\[0.2cm]
 &&  \cdot \underset{T+1 \leq i_1 , i_3 \leq p }{\ds\sum }  \mathbb{E}_{\Sigma} [(X_{1_,i_1} X_{1, i_1 - j_1}- \sigma_{j_1}) (X_{1_,i_3} X_{1, i_3 - j_3}- \sigma_{j_3}) ]  \nonumber \\[0.2cm]
  && \cdot \underset{T+1 \leq i_2 , i_4 \leq p }{\ds\sum } \mathbb{E}_{\Sigma}[(X_{2, i_2} X_{2, i_2-j_2} - \sigma_{j_2}) (X_{2, i_4} X_{2, i_4-j_4} - \sigma_{j_4}) ]
 \end{eqnarray}
We calculate each expected value, and bound from above by the absolute value, we obtain:
 \begin{eqnarray}
&&4n^4 \cdot \mathbb{E}_{\Sigma}(G^2_n(X_1, X_2)) \nonumber \\
 & \leq &   4 \underset{1 \leq j_1, j_2,j_3,j_4 <T}{\ds\sum  }  w_{j_{1}}^* w_{j_{2}}^* w_{j_{3}}^* w_{j_{4}}^* \underset{-p+T+1\leq r_1, r_2,r_3,r_4 \leq p-(T+1)}  {\ds\sum }  \nonumber \\
 && \cdot  (|\sigma_{|r_1|}\sigma_{|r_1- j_1 + j_2|}| + |\sigma_{|r_1- j_1|}\sigma_{|r_1+ j_2|}|) (|\sigma_{|r_2|}\sigma_{|r_2- j_3 + j_4|}| + |\sigma_{|r_2- j_3|}\sigma_{|r_2+ j_4|}|) \nonumber \\
 &&  \cdot  (|\sigma_{|r_3|}\sigma_{|r_3- j_1 + j_3|}| + |\sigma_{|r_3- j_1|}\sigma_{|r_3+ j_3|}|) (|\sigma_{|r_4|}\sigma_{|r_4- j_2 + j_4|} | + |\sigma_{|r_4 - j_2|}\sigma_{|r_4 + j_4|}|)
 \label{firstratio}
\end{eqnarray}
In (\ref{firstratio}) there are sixteen terms, that are all treated the same way, then we deal with,
\[
\begin{array}{lcl}
\mathcal{G} &:=& 4 \underset{1 \leq j_1, j_2,j_3,j_4 <T}{\ds\sum }  w_{j_{1}}^* w_{j_{2}}^* w_{j_{3}}^* w_{j_{4}}^* \underset{-p+T+1\leq r_1, r_2,r_3,r_4 \leq p-(T+1)}  {\ds\sum }  \\
&& \cdot | \sigma_{|r_1|}\sigma_{|r_1- j_1 + j_2|} \sigma_{|r_2|}\sigma_{|r_2- j_3 + j_4|} \sigma_{|r_3|}\sigma_{|r_3- j_1 + j_3|} \sigma_{|r_4|}\sigma_{|r_4- j_2 + j_4|} |
  \end{array}
\]
To bound from above this previous quantity, we distinguish four cases, based on the indices $j_1, j_2, j_3$ and $j_4$.
Let us begin by the the first case, when $j_1=j_2=j_3=j_4$ :
\[
\mathcal{G}_1 :=    4 \ds\sum_{j_{1}=1}^T w_{j_1}^{*4} \underset{-p +T+1  \leq r_1, r_2, r_3, r_4 \leq p-(T+1)}{ \sum}
\sigma_{|r_1|}^2 \sigma_{|r_2}^2  \sigma_{|r_3|}^2 \sigma_{|r_4|}^2 \leq 4 \cdot (\sup\limits_{j} w_{j}^*)^4 \cdot T \cdot (2L)^4 = O(\ds\frac{1}{T}) = o(1)
\]
We consider the second case, where there are two different values of indices, either two groups of two, or one group of three and one separate index. For the first one, let us assume that ($j_1 =j_4$, $j_2 =j_3$ and $j_1 \neq j_2)$,
\begin{eqnarray}
\mathcal{G}_2 & := &  4 \underset{1 \leq j_1 \neq j_2 < T}{\ds\sum }   w_{j_1}^{*2} w_{j_2}^{*2}   \underset{-p +T+1  \leq r_1, r_2, r_3, r_4 \leq p-(T+1)}{ \sum} | \sigma_{|r_1|}\sigma_{|r_1- j_1 + j_2|} \sigma_{|r_2|}\sigma_{|r_2- j_2 + j_1|}| \label{G2} \\
&& \hspace{7cm} \cdot  |\sigma_{|r_3|}\sigma_{|r_3- j_1 + j_2|} \sigma_{|r_4|}\sigma_{|r_4- j_2 + j_1|} | \nonumber \\
&=& 4  \underset{1 \leq j_1 \neq j_2 < T}{\ds\sum }   w_{j_1}^{*2} w_{j_2}^{*2} \cdot \Big( 2\, | \sigma_0 \, \sigma_{|j_1 - j_2|} | + \sum_{\substack{r_1 = -p +T+1 \\ r_1 \ne 0, \, r_1 \ne j_1 -j_2}}^{p-(T+1)} | \sigma_{|r_1|}\sigma_{|r_1- j_1 + j_2|} | \Big)^2 \nonumber\\
&& \cdot \Big( 2\, |  \sigma_0 \, \sigma_{|j_1 - j_2|} | + \sum_{\substack{r_2 = -p +T+1 \\ r_2 \ne 0, \, r_2 \ne j_2 -j_1}}^{p-(T+1)} | \sigma_{|r_2|}\sigma_{|r_2- j_2 + j_1|} | \Big)^2 \nonumber 
\end{eqnarray}
We apply the Cauchy-Schwarz inequality with respect to  $r_1$  and $r_2$ separately to get :
\begin{eqnarray*}
\mathcal{G}_2 & \leq &  4 \cdot  ( 2 + 2L)^2 \underset{1 \leq j_1 \neq j_2 < T}{\ds\sum }    w_{j_1}^{*2} w_{j_2}^{*2} \cdot  \Big\{ 4 \,  \sigma_{|j_1- j_2|}^2 + 2( \sum_{\substack{r_1 = -p +T+1 \\ r_1 \ne 0, \,  r_1 \ne  j_1 -j_2}}^{p-(T+1)} \sigma_{|r_1|}^2) ( \sum_{\substack{r_1 = -p +T+1 \\ r_1 \ne 0, \,  r_1 \ne  j_1 -j_2}}^{p-(T+1)}\sigma_{|r_1- j_1 + j_2|}^2) \, \Big\} \nonumber \\ 
& \leq & 16\cdot  (2 +2L)^2 \cdot   \Big(  \underset{1 \leq j_1 \neq j_2 < T}{\ds\sum }    w_{j_1}^{*2} w_{j_2}^{*2}   \sigma_{|j_1- j_2|}^2 +   2L \cdot  \underset{1 \leq j_1 \neq j_2 < T}{\ds\sum }    w_{j_1}^{*2} w_{j_2}^{*2}  \sum_{r_1 \ne 0} \sigma_{|r_1|}^2 \, \Big) \nonumber  \\ 
& \leq & 16 \cdot  (2 +2L)^2 \cdot \Big\{ ( \sup\limits_{j_2} w_{j_2}^{*2})  \cdot  \ds\sum_{j_1 =1}^T    w_{j_1}^{*2}  \sum_{j_2 =1}^T   \sigma_{|j_1- j_2|}^2 \nonumber \\
&+& 2L \cdot \sum_{j_2 =1}^T w_{j_2}^{*2} \cdot  \Big( \sum_{j_1 =1}^T w_{j_1}^* \sum_{\substack{ r_1; r_1 \ne 0 \\ |r_1| \leq j_1 }} w_{|r_1|} \sigma_{|r_1|}^2  +   \sum_{j_1 =1}^T w_{j_1}^{*2}  \sum_{\substack{ r_1; r_1 \ne 0 \\ |r_1|> j_1 }}\ds\frac{|r_1|^{2 \alpha}}{j_1^{2 \alpha}} \sigma_{|r_1|}^2 \, \Big\} \nonumber \\
& \leq & O \Big( \ds\frac 1T \Big) + O( \ds\sqrt{T}) \cdot \mathbb{E}(\widehat{\mathcal{A}}_n) + O \Big( \ds\frac 1T \Big) \cdot \max\{1, T^{-2\alpha +1} \} = o(1) ~  \text{ since }  \mathbb{E}(\widehat{\mathcal{A}}_n) \asymp 1/np  \text{ and } T/p \to 0.
\end{eqnarray*}
 Similar argument to prove that for $j_1 = j_3 =j_4$ and $j_1 \neq j_2$, we have,
 \[
  4 \underset{1 \leq j_1 \neq j_2 < T}{\ds\sum}  w_{j_1}^{*3} w_{j_2}^{*} \underset{-p +T+1  \leq r_1, r_2, r_3, r_4 \leq p-(T+1)}{\sum} \sigma_{|r_1|}\sigma_{|r_1- j_1 + j_2|} \sigma_{|r_2|}^2 \sigma_{|r_3|}^2  \sigma_{|r_4|} \sigma_{|r_4 - j_2 + j_1|} =o(1)
 \]
 which finishes the second case.
 Now let us assume that  we have three different values, ($j_1 =j_4$ and $j_1 \neq j_2 \neq j_3$), we obtain,
\begin{eqnarray}
\mathcal{G}_3 &:=& 4 \underset{1 \leq j_1 \neq j_2 \neq j_3 <T}{\ds\sum} w_{j_1}^{*2} w_{j_2}^{*}  w_{j_3}^{*} \underset{-p +T+1  \leq r_1, r_2, r_3, r_4 \leq p-(T+1)}{\sum }
 | \sigma_{|r_1|}\sigma_{|r_1- j_1 + j_2|} \sigma_{|r_2|}\sigma_{|r_2- j_3 + j_1|} | \nonumber \\
  && \hspace{7cm} \cdot |\sigma_{|r_3|}\sigma_{|r_3- j_1 + j_3|} \sigma_{|r_4|}\sigma_{|r_4- j_2 + j_1|} | \nonumber \\
  &= & 4 \underset{1 \leq j_1 \neq j_2 \neq j_3 <T}{\ds\sum } w_{j_1}^{*2} w_{j_2}^{*}  w_{j_3}^{*} \Big( 2\,  |\sigma_{|j_2 - j_1|}| + \sum_{\substack{r_1 = -p +T+1 \\ r_1 \ne 0, \, r_1 \ne   j_2 -j_1}}^{p-(T+1)} |\sigma_{|r_1|}\sigma_{|r_1- j_1 + j_2|}| \Big) \nonumber \\
  && \Big( 2\,   | \sigma_{|j_1 - j_3|}| + \sum_{\substack{r_2 = -p +T+1 \\ r_2  \ne 0, \,  r_2 \ne  j_1 -j_3}}^{p-(T+1)} |\sigma_{|r_2|}\sigma_{|r_2- j_1 + j_3|} | \Big)  \cdot  \Big( 2\,   |\sigma_{|j_3 - j_1|}| + \sum_{\substack{r_3 = -p +T+1 \\ r_3 \ne 0, \, r_3 \ne   j_3 -j_1}}^{p-(T+1)} |\sigma_{|r_3|}\sigma_{|r_3- j_3 + j_1|}| \Big) \nonumber \\
  && \Big( 2\,  | \sigma_{|j_1 - j_2|}| + \sum_{\substack{r_4 = -p +T+1 \\ r_4 \ne 0, \,  r_4 \ne  j_1 -j_2}}^{p-(T+1)} |\sigma_{|r_4|}\sigma_{|r_4- j_2+ j_1|} | \Big) \nonumber 
\end{eqnarray}
and hence
\begin{eqnarray}
\mathcal{G}_{3,1} &:=&  \underset{1 \leq j_1 \neq j_2 \neq j_3 <T}{\ds\sum } w_{j_1}^{*2} w_{j_2}^{*}  w_{j_3}^{*} \sigma_{|j_1 - j_2|}^2 \sigma_{|j_1 - j_3|}^2 \leq   ( \sup\limits_{j} w_{j}^*)^2\sum_{j_1 =1}^T w_{j_1}^{*2} \sum_{j_2 =1}^T \sigma_{|j_1 - j_2|}^2 \sum_{j_3 =1}^T \sigma_{|j_1 - j_3|}^2 \nonumber \\
& \leq &  ( \sup\limits_{j} w_{j}^*)^2 \cdot \frac 12 \cdot 4L^2 =o(1). \nonumber 
\end{eqnarray}
Note that  $\sup\limits_r \sigma_r \leq 1$ and by Cauchy-Schwarz we have $ \ds\sum_{\substack{r_4 = -p +T+1 \\ r_4 \ne 0, \,  r_4 \ne  j_1 -j_2}}^{p-(T+1)} |\sigma_{|r_4|}\sigma_{|r_4- j_2+ j_1|} | \leq \sum_{\substack{r_4 \\ r_4 \ne 0 }} \sigma_{r_4}^2 $. Thus we get,
\begin{eqnarray*}
\mathcal{G}_{3,2} & := &  \underset{1 \leq j_1 \neq j_2 \neq j_3 <T}{\ds\sum } w_{j_1}^{*2} w_{j_2}^{*}  w_{j_3}^{*} \sigma_{|j_1 - j_2|} \sigma_{|j_1 - j_3|}^2 \sum_{\substack{r_4 = -p +T+1 \\ r_4 \ne 0, \,  r_4 \ne  j_1 -j_2}}^{p-(T+1)} |\sigma_{|r_4|}\sigma_{|r_4- j_2+ j_1|} | \nonumber \\
& \leq &  (\sup\limits_{j} w_j^*) \cdot \sum_{j_1} w_{j_1}^{*2}  \sum_{j_2} w_{j_2}^* \sum_{j_3} \sigma_{|j_1 - j_3|}^2  \Big( \sum_{\substack{ |r_4| \leq j_2 \\ r_4 \ne 0}} \sigma_{r_4}^2 + \sum_{\substack{ |r_4| > j_2 \\ r_4 \ne 0}} \frac{ |r_4|^{2 \alpha }}{j_2^{2 \alpha}} \sigma_{|r_4|}^2 \Big) \nonumber \\
& \leq & (\sup\limits_{j} w_j^*) \cdot \ds\frac 12 \cdot 2L \cdot  \Big( T \cdot  \mathbb{E}(\widehat{\mathcal{A}}_n)  +  (\sup\limits_{j} w_j^*) \cdot \max \{ 1, T^{-2 \alpha +1} \} \cdot 2L \Big) =o(1).
\end{eqnarray*}
Moreover,
\begin{eqnarray*}
\mathcal{G}_{3,3} & := & \hspace{-0.3cm} \underset{1 \leq j_1 \neq j_2 \neq j_3 <T}{\ds\sum } w_{j_1}^{*2} w_{j_2}^{*}  w_{j_3}^{*} \sigma_{|j_1 - j_2|} \sigma_{|j_1 - j_3|}  \sum_{\substack{r_2 = -p +T+1 \\ r_2  \ne 0, \,  r_2 \ne  j_1 -j_3}}^{p-(T+1)} |\sigma_{|r_2|}\sigma_{|r_2- j_1 + j_3|} |  \sum_{\substack{r_4 = -p +T+1 \\ r_4 \ne 0, \,  r_4 \ne  j_1 -j_2}}^{p-(T+1)} |\sigma_{|r_4|}\sigma_{|r_4- j_2+ j_1|} | \nonumber \\
& \leq &  \sum_{j_1} w_{j_1}^{*2} \cdot \sum_{j_2} w_{j_2}^*  \Big( \sum_{\substack{ |r_2| \leq j_2 \\ r_2 \ne 0}} \sigma_{r_2}^2 + \sum_{\substack{ |r_2| > j_2 \\ r_2 \ne 0}} \frac{ |r_2|^{2 \alpha }}{j_2^{2 \alpha}} \sigma_{|r_2|}^2 \Big)
 \sum_{j_3} w_{j_3}^*  \Big( \sum_{\substack{ |r_4| \leq j_3 \\ r_4 \ne 0}} \sigma_{r_4}^2 + \sum_{\substack{ |r_4| > j_3 \\ r_4 \ne 0}} \frac{ |r_4|^{2 \alpha }}{j_3^{2 \alpha}} \sigma_{|r_4|}^2 \Big) \nonumber \\
 & \leq &\ds\frac 12 \cdot \Big( T \cdot  \mathbb{E}(\widehat{\mathcal{A}}_n)  +  (\sup\limits_{j} w_j^*) \cdot  \max \{ 1, T^{-2 \alpha +1} \} \cdot 2L \Big)^2 =o(1)
\end{eqnarray*}
and
\begin{eqnarray*}
\mathcal{G}_{3,4} & := &\underset{1 \leq j_1 \neq j_2 \neq j_3 <T}{\ds\sum } w_{j_1}^{*2} w_{j_2}^{*}  w_{j_3}^{*}  \sigma_{|j_2 - j_1|} \sum_{\substack{r_2 = -p +T+1 \\ r_2  \ne 0, \,   r_2 \ne j_1 -j_3}}^{p-(T+1)} |\sigma_{|r_2|}\sigma_{|r_2- j_1 + j_3|} |  \sum_{\substack{r_3 = -p +T+1 \\ r_3 \ne 0, \,  r_3 \ne  j_3 -j_1}}^{p-(T+1)} |\sigma_{|r_3|}\sigma_{|r_3- j_3 + j_1|}| \nonumber \\
&& \sum_{\substack{r_4 = -p +T+1 \\ r_4 \ne 0, \,  r_4 \ne  j_1 -j_2}}^{p-(T+1)} |\sigma_{|r_4|}\sigma_{|r_4- j_2+ j_1|} |  \nonumber \\
& \leq & \sum_{j_1} w_{j_1}^{*2} \cdot \sum_{j_2} w_{j_2}^*  \Big( \sum_{\substack{ |r_2| \leq j_2 \\ r_2 \ne 0}} \sigma_{r_2}^2 + \sum_{\substack{ |r_2| > j_2 \\ r_2 \ne 0}} \frac{ |r_2|^{2 \alpha }}{j_2^{2 \alpha}} \sigma_{|r_2|}^2 \Big)
 \sum_{j_3} w_{j_3}^*  \Big( \sum_{\substack{ |r_4| \leq j_3 \\ r_4 \ne 0}} \sigma_{r_4}^2 + \sum_{\substack{ |r_4| > j_3 \\ r_4 \ne 0}} \frac{ |r_4|^{2 \alpha }}{j_3^{2 \alpha}} \sigma_{|r_4|}^2 \Big) \cdot 2L \nonumber \\
 & =& o(1).
\end{eqnarray*}
Similarly we show that 
\begin{eqnarray*}
\mathcal{G}_{3,5} & := &\underset{1 \leq j_1 \neq j_2 \neq j_3 <T}{\ds\sum } w_{j_1}^{*2} w_{j_2}^{*}  w_{j_3}^{*}  \sum_{\substack{r_1 = -p +T+1 \\ r_1 \ne 0, \,  r_1 \ne  j_2 -j_1}}^{p-(T+1)} |\sigma_{|r_1|}\sigma_{|r_1- j_1 + j_2|}| \sum_{\substack{r_2 = -p +T+1 \\ r_2  \ne 0, \,  r_2 \ne  j_1 -j_3}}^{p-(T+1)} |\sigma_{|r_2|}\sigma_{|r_2- j_1 + j_3|} |  \nonumber \\
&& \cdot  \sum_{\substack{r_3 = -p +T+1 \\ r_3 \ne 0, \,  r_3 \ne  j_3 -j_1}}^{p-(T+1)} |\sigma_{|r_3|}\sigma_{|r_3- j_3 + j_1|}| \sum_{\substack{r_4 = -p +T+1 \\ r_4 \ne 0, \,  r_4 \ne  j_1 -j_2}}^{p-(T+1)} |\sigma_{|r_4|}\sigma_{|r_4- j_2+ j_1|} |  =o(1).
\end{eqnarray*}
Finally, when all indices are pairwise distinct. We use the same arguments as previously, and we get, 
\begin{eqnarray*}
\mathcal{G}_4 &:=&  4  \underset{1 \leq j_1 \neq j_2 \neq j_3 \neq j_4 <T}{\ds\sum }  w_{j_{1}}^* w_{j_{2}}^* w_{j_{3}}^* w_{j_{4}}^* \underset{-p +T+1  \leq r_1, r_2, r_3, r_4 \leq p-(T+1)}{\sum }  |\sigma_{|r_1|}\sigma_{|r_1- j_1 + j_2|} \sigma_{|r_2|}\sigma_{|r_2- j_3 + j_4|}| \nonumber \\
&& \hspace{7cm} \cdot | \sigma_{|r_3|}\sigma_{|r_3- j_1 + j_3|} \sigma_{|r_4|}\sigma_{|r_4- j_2 + j_4|}| \nonumber \\
&= &  4  \underset{1 \leq j_1 \neq j_2 \neq j_3 \neq j_4 <T}{\ds\sum }  w_{j_{1}}^* w_{j_{2}}^* w_{j_{3}}^* w_{j_{4}}^* \Big( 2 \, \sigma_0 \, | \sigma_{|j_1 - j_2|} | + \sum_{r_1 \ne 0 , \,  r_1 \ne j_1 - j_2 }  |\sigma_{|r_1|}\sigma_{|r_1- j_1 + j_2|}| \,  \Big) \nonumber \\
&& \Big( 2 \, \sigma_0 \, | \sigma_{|j_3 - j_4|} | + \sum_{r_2 \ne 0 , \, r_2 \ne  j_3 - j_4 }  |\sigma_{|r_2|}\sigma_{|r_2 - j_3 + j_4|}| \,  \Big)  
\Big( 2 \, \sigma_0 \, | \sigma_{|j_1 - j_3|} | + \sum_{r_3 \ne 0 , \,  r_3 \ne j_1 - j_3 }  |\sigma_{|r_3|}\sigma_{|r_3 - j_1 + j_3|}| \,  \Big)
\nonumber \\ 
&&  \Big( 2 \, \sigma_0 \, | \sigma_{|j_2 - j_4|} | + \sum_{r_4 \ne 0 , \,  r_4 \ne j_2 - j_4 }  |\sigma_{|r_4|}\sigma_{|r_4 - j_2 + j_4|}| \,  \Big) \nonumber 
\end{eqnarray*}
Now, we treat each term of $\mathcal{G}_4 $ separately:
\begin{eqnarray*}
\mathcal{G}_{4,1} & :=&     \underset{1 \leq j_1 \neq j_2 \neq j_3 \neq j_4 <T}{\ds\sum}  w_{j_{1}}^* w_{j_{2}}^* w_{j_{3}}^* w_{j_{4}}^*   | \sigma_{|j_1 - j_2|} \sigma_{|j_3 - j_4|}  \sigma_{|j_1 - j_3|}  \sigma_{|j_2 - j_4|}  |  \nonumber \\ 
& \leq & \cdot  \Big( \sum_{j_1,\,j_2} w_{j_{1}}^* w_{j_{2}}^* \sigma_{|j_1 - j_2|}^2 \Big)^{ \frac 12}  \Big( \sum_{j_1,\,j_3} w_{j_{1}}^* w_{j_{3}}^* \sigma_{|j_1 - j_3|}^2 \Big)^{ \frac 12}   \Big( \sum_{j_2,\,j_4} w_{j_{2}}^* w_{j_{4}}^* \sigma_{|j_2 - j_4|}^2 \Big)^{ \frac 12}  \Big( \sum_{j_3,\,j_4} w_{j_{3}}^* w_{j_{4}}^* \sigma_{|j_3 - j_4|}^2 \Big)^{ \frac 12} \nonumber \\
& \leq & \cdot \Big( \sum_{j_1} w_{j_1}^* \sum_{ \substack{j_2 \\ j_2 \leq |j_1 - j_2|}} w_{|j_1 - j_2|}^* \sigma_{|j_1 - j_2|}^2  \,  + ( \sup\limits_j w_j^* ) \cdot \sum_{j_1}  \sum_{ \substack{j_2 \\ j_2 > |j_1 - j_2|}} \frac{|j_1 - j_2|^{2 \alpha}}{j_2^{2 \alpha}} \sigma_{|j_1 - j_2|}^2 \Big)^2 \nonumber \\
& \leq & \Big( O( \ds\sqrt{T} ) \cdot \mathbb{E}_{\Sigma}(\widehat{\mathcal{A}}_n) + O \Big( \ds\frac 1{\sqrt{T}} \Big) \cdot \max\{ 1, T^{-2 \alpha +1 } \} \cdot L \Big)^2 =o(1) \nonumber
\end{eqnarray*}
and
\begin{eqnarray*}
\mathcal{G}_{4,2} & := &  \underset{1 \leq j_1 \neq j_2 \neq j_3 \neq j_4 <T}{\ds\sum }  w_{j_{1}}^* w_{j_{2}}^* w_{j_{3}}^* w_{j_{4}}^*   | \sigma_{|j_1 - j_2|} \sigma_{|j_3 - j_4|}  \sigma_{|j_1 - j_3|} | \sum_{r_4 \ne 0 , \,  r_4 \ne j_2 - j_4 }  |\sigma_{|r_4|}\sigma_{|r_4 - j_2 + j_4|}| \nonumber \\
& \leq & \underset{ j_2, j_4 }{ \sum \sum } w_{j_{2}}^* w_{j_{4}}^*   \Big( \sum_{j_1} \sum_{j_3} w_{j_{1}}^* w_{j_{3}}^* \sigma_{|j_1 - j_3|}^2 \Big)^{ \frac 12} \Big( \sum_{j_1} w_{j_{1}}^*  \sigma_{|j_1 - j_2|}^2 \Big)^{ \frac 12}  \Big(  \sum_{j_3} w_{j_{3}}^* \sigma_{|j_3 - j_4|}^2 \Big)^{ \frac 12} \nonumber \\
&& \cdot  \Big( \sum_{ \substack{ r_4 \ne 0 , \, r_4 \ne  j_2 - j_4 \\ |r_4| \leq  j_2} }   \sigma_{|r_4|}^2  +  \sum_{ \substack{ r_4 \ne 0 , \,  r_4 \ne j_2 - j_4 \\ |r_4 | \leq  j_2} }   \sigma_{|r_4|}^2 \Big)\nonumber \\
& \leq &\Big( \sum_{j_1} \sum_{j_3} w_{j_{1}}^* w_{j_{3}}^* \sigma_{|j_1 - j_3|}^2 \Big)^{ \frac 12} \Big((\sup\limits_j w_{j}^*) \cdot L \Big) \nonumber \\
&& \cdot  \Big(  \, \underset{ j_2, j_4 }{ \sum \sum }   w_{j_{4}}^*  \sum_{ \substack{ r_4 \ne 0 \\ |r_4 | \leq  j_2} }  w_{j_{2}}  \sigma_{|r_4|}^2  +   \underset{ j_2, j_4 }{ \sum \sum } w_{j_{2}}^* w_{j_{4}}^*   \sum_{ \substack{ r_4 \ne 0 \\ |r_4| \leq  j_2} }    \frac{|r_4|^{2\alpha}}{j_2^{2 \alpha}} \sigma_{|r_4|}^2   \Big)  \nonumber \\
& \leq & \Big( O( \ds\sqrt{T} ) \cdot \mathbb{E}_{\Sigma}(\widehat{\mathcal{A}}_n) + O \Big( \ds\frac 1{\sqrt{T}} \Big) \cdot \max\{ 1, T^{-2 \alpha +1 } \} \Big)^{\frac 12} \cdot O \Big( \ds\frac 1{\ds\sqrt{T}} \Big) \nonumber \\
 && \cdot \Big( T^2 \cdot (\sup\limits_j w_j^* ) \cdot \mathbb{E}_{\Sigma}(\widehat{\mathcal{A}}_n) + T \cdot (\sup\limits_j w_{j}^* )^2 \cdot \max \{ 1, T^{-2\alpha +1 } \cdot L \} \Big) \nonumber \\
 & \leq & \Big( O( \ds\sqrt{T} ) \cdot \mathbb{E}_{\Sigma}(\widehat{\mathcal{A}}_n) + O \Big( \ds\frac 1{\sqrt{T}} \Big) \cdot \max\{ 1, T^{-2 \alpha +1 } \} \cdot 2L \Big)^{\frac 12} \nonumber \\
 &&\cdot \Big( T \cdot \mathbb{E}_{\Sigma}(\widehat{\mathcal{A}}_n) + O \Big( \ds\frac 1{\ds\sqrt{T}} \Big) \cdot \max \{ 1, T^{-2\alpha +1 } \}\cdot L \Big) \nonumber \\
 &= & o(1) ~~\text{ since }  \mathbb{E}(\widehat{\mathcal{A}}_n)  \asymp 1/np \text{ and for all } \alpha >1/4. \nonumber 
\end{eqnarray*}
We use similar argument as previously to show that the remaining terms in $\mathcal{G}_4$ tend to zero.
To complete the proof, we need to verify that,
\begin{equation}
\label{conditionNA2}
 \mathbb{E}_{\Sigma}(H_n^4(X_1, X_2)) / \mathbb{E}_{\Sigma}^2(H_n^2(X_1, X_2)) = o(n).  
 \end{equation}
 We write
\begin{eqnarray}
&& \ds\frac{\mathbb{E}_{\Sigma}(H_n^4(X_1, X_2))}{ \mathbb{E}_{\Sigma}^2(H_n^2(X_1, X_2))}  = \ds\frac{1}{(p-T)^4} \ds\sum_{j_1,\,j_2,\,j_3,\, j_4} ~ w_{j_1}^*  w_{j_2}^* w_{j_3}^* w_{j_4}^*~ \underset{T +1 \leq i_1, i_3, i_5, i_7 \leq p}{\sum } ~~ \underset{T +1 \leq i_2, i_4, i_6, i_8 \leq p}{\sum } \nonumber \\
&& \mathbb{E}_{\Sigma} [(X_{1, i_1}X_{1, i_1-j_1} - \sigma_{j_1})(X_{1, i_3}X_{1, i_3-j_2} - \sigma_{j_2})(X_{1, i_5}X_{1, i_5-j_3} - \sigma_{j_3})  (X_{1, i_7}X_{1, i_7-j_4} - \sigma_{j_4})] \nonumber \\
&& \cdot \mathbb{E}_{\Sigma}[(X_{2, i_2}X_{2, i_2-j_1} - \sigma_{j_1}) (X_{2, i_4}X_{2, i_4-j_2} - \sigma_{j_2}) (X_{2, i_6}X_{2, i_6-j_3} - \sigma_{j_3})(X_{2, i_8}X_{2, i_8-j_4} - \sigma_{j_4}) ] \nonumber 
\end{eqnarray}
To bound from above the previous sum, we replace the expected value by it's value, which is a sum of many terms, that are all treated similarly. So let us give an upper bound for the following one :
\begin{eqnarray}
\mathcal{H} &:=& \ds\frac{1}{(p-T)^4} \ds\sum_{j_1,\,j_2,\,j_3,\, j_4} ~ w_{j_1}^*  w_{j_2}^* w_{j_3}^* w_{j_4}^* ~ \underset{T +1 \leq i_1, i_3, i_5, i_7 \leq p}{\sum } ~~ \underset{T +1 \leq i_2, i_4, i_6, i_8 \leq p}{\sum } \sigma_{|i_1-i_3|}\sigma_{|i_1 -i_3-j_1+j_2|} \nonumber \\
&& \hspace{4cm} \cdot \sigma_{|i_5 -i_7|} \sigma_{|i_5 -i_7-j_3 +j_4|} \sigma_{|i_2-i_4|}\sigma_{|i_2 -i_4-j_1+j_2|} \sigma_{|i_6 -i_8|} \sigma_{|i_6 -i_8-j_3 +j_4 |} \nonumber \\
& \leq &  \ds\sum_{j_1,\,j_2,\,j_3,\, j_4} w_{j_1}^*  w_{j_2}^* w_{j_3}^* w_{j_4}^*  \underset{-p +1  \leq r_1, r_2, r_3, r_4 \leq p-1}{\sum} \sigma_{|r_1|} \sigma_{|r_1 -j_1 + j_2|}\sigma_{|r_2|} \sigma_{|r_2 -j_1 + j_2|} \nonumber \\
&& \hspace{8cm} \cdot \sigma_{|r_3|} \sigma_{|r_3 -j_3 + j_4|} \sigma_{|r_4|} \sigma_{|r_4 -j_3 + j_4|} \nonumber 
\end{eqnarray}  
We  see that $\mathcal{H}$ can be treated in the same way as  $\mathcal{G}$. However, we show that $\mathcal{H} = O(1) =o(n)$. 
Let us deal with one of the terms  of $ \mathcal{H}$,  consider the term for which we have $j_1 =j_2 $, $j_3 = j_4$, and $j_1 \ne j_3$ thus we get 
\begin{eqnarray*}
&& \underset{1 \leq j_1 \ne j_3 < T}{ \sum }   w_{j_1}^{*2}  w_{j_3}^{*2}  \underset{-p +1  \leq r_1, r_2, r_3, r_4 \leq p-1}{\sum } \sigma_{|r_1|}^2 \sigma_{|r_2|}^2
  \sigma_{|r_3|}^2 \sigma_{|r_4|}^2 = \underset{1 \leq j_1 \ne j_3 < T}{ \sum }   w_{j_1}^{*2}  w_{j_3}^{*2} \cdot  \Big( \sigma_0^2 +  \sum_{r_1 \ne 0 } \sigma_{|r_1|}^2  \Big)^2 \nonumber \\
  &\leq & 2   \underset{1 \leq j_1 \ne  j_3 < T}{ \sum \sum}   w_{j_1}^{*2}  w_{j_3}^{*2}  +  2\underset{1 \leq j_1 \ne j_3 < T}{ \sum \sum}   w_{j_1}^{*2}  w_{j_3}^{*2} \Big(   \sum_{r_1 \ne 0 } \sigma_{|r_1|}^2  \Big)^2 .
\end{eqnarray*}
It is easily seen that $ \underset{1 \leq j_1 \neq j_2 < T}{ \ds\sum \sum}   w_{j_1}^{*2}  w_{j_2}^{*2} = O(1)$. And so on, we show that all terms in $\mathcal{H}$ are $O(1)$ and thus we get the desired result.
Together with (\ref{conditionNA1}), this proves (\ref{conditionNA}).  In consequence, we apply theorem 1 of  \cite{Hall84}, to get (\ref{convinlaw}).
\hfill \end{proof}

\begin{proof}[Proof of (\ref{ANH0})]
We define $\widehat{\mathcal{B}}_{n,p}$ as follows,
\begin{eqnarray*}
\widehat{\mathcal{B}}_{n,p} &= &\ds\frac{2}{\ds\sqrt{n(n-1)(p-T)(p-T-1)}}  \sum_{ i=T+1}^p \sum_{h=  i+1 }^p \underset{1 \leq k \neq l \leq n}{\ds\sum }\sum_{j=1}^{T-1} w_j^*  ~ X_{k, i}X_{k, i-j} X_{l, h}X_{l, h-j}  
\end{eqnarray*}
We set 
\begin{eqnarray*}
D_{n,p, i} &=& \ds\frac{2}{\ds\sqrt{n(n-1)(p-T)(p-T-1)}} \sum_{h=  i+1 }^p \underset{1 \leq k \neq l \leq n}{\ds\sum }\sum_{j=1}^{T-1} w_j^*  ~ X_{k, i}X_{k, i-j} X_{l, h}X_{l, h-j} \\
& :=& c(n,p,T)  \sum_{h=  i+1 }^p \underset{1 \leq k \neq l \leq n}{\ds\sum }\sum_{j=1}^{T-1} w_j^*  ~ X_{k, i}X_{k, i-j} X_{l, h}X_{l, h-j} 
\end{eqnarray*}
Note that the $\{D_{n,p,i}\}_{T+1 \leq i \leq p}$ is a sequence of martingale differences with respect to  the sequence of  $\sigma$ fields $\{\mathcal{F}_i, i \geq T +1 \}$  such that $\mathcal{F}_i = \sigma\{ X_{. , r}~, r \leq i \}$, we denote by $\mathbb{E}_{i}(\cdot) = \mathbb{E}(\cdot / \mathcal{F}_{i})$, where $\mathbb{E}$ is the expected value under the null hypothesis. Indeed, for all $T+1 \leq i \leq p$, we have,
$\mathbb{E}_{i-1}(D_{n,p, i} )  =0.$
We use sufficient conditions to show the asymptotic normality of a sum of martingale differences $\widehat{\mathcal{B}}_{n,p}$ for all $n \geq 2$, as $(p-T) \to \infty$, see e.g. \cite{shiryaev96}.  Thus it suffices to show that,
\begin{equation}
\label{conditions}
\mathbb{E}\Big(\ds\sum_{i=T+1}^p \mathbb{E}_{i-1}(D_{n,p, i}^2) -1  \Big)^2 \to 0 \quad \text{ and }  \quad   \sum_{i=T+1}^p \mathbb{E}(D_{n,p, i}^4 ) \to 0. 
\end{equation}
We first show the first part of \eqref{conditions}.
\begin{eqnarray*}
 \mathbb{E}_{i-1}(D_{n,p, i}^2 ) 
&=& (c(n,p,T))^2 \ds\sum_{h= i+1}^p \, \underset{1 \leq k \neq l \leq n}{\ds\sum }  \, \sum_{1 \leq j,j_1 \leq < T-1} w_{j}\* w_{j_1}^* X_{k, i-j} X_{k, i-j_1} \mathbb{E}_{i-1}( X_{l, h-j} X_{l, h-j_1} )   \\
&=& (c(n,p,T))^2 \cdot  \Big( \sum_{1 \leq j,j_1 \leq < T-1}   \underset{1 \leq k \neq l \leq n}{\ds\sum }  w_{j}\* w_{j_1}^* X_{k, i-j} X_{k, i-j_1}   \ds\sum_{h= i+1}^{(i+j_1-1) \wedge (i+j̈́-1)} X_{l, h-j}  X_{l, h-j_1} \\
&& \hspace{2cm} + \,  (n-1) \sum_{k=1}^n \sum_{j=1}^T w_j^{*2} X_{k, i-j}^2 (p-i-j+1) \Big)
\end{eqnarray*}
giving 
\begin{eqnarray*}
&& \mathbb{E}(\sum_{i=T+1}^p \mathbb{E}_{i-1}(D_{n,p, i}^2 ) ) \nonumber \\
 &=& c^2(n,p,T) \cdot \Big( n(n-1)\sum_{i=T+1}^p \sum_{j=1}^{T-1} w_j^{*2} (j-1) 
 + n(n-1)\sum_{i=T+1}^p \sum_{j=1}^{T-1} w_j^{*2}  (p-i-j+1) \Big) \\
&=&  \ds\frac 4{(p-T)(p-T-1)} \sum_{j=1}^{T-1} w_j^{*2} \sum_{i=T+1}^p (p-i) =1 .
\end{eqnarray*} 
Thus, to show that $   \mathbb{E}\Big(\ds\sum_{i=T+1}^p \mathbb{E}_{i-1}(D_{n,p, i}^2) -1  \Big)^2 \to 0$, it is sufficient to show that $ \mathbb{E}\Big(\ds\sum_{i=T+1}^p \mathbb{E}_{i-1}(D_{n,p, i}^2)  \Big)^2 = 1+ o(1)$.
Indeed,
\begin{equation}
\label{square}
 \mathbb{E}\Big(\ds\sum_{i=T+1}^p \mathbb{E}_{i-1}(D_{n,p, i}^2)  \Big)^2 = (c(n,p,T))^4 \cdot  \Big(E_1 + E_2 + E_3 +E_4 \Big). 
\end{equation}
where $E_1, E_2$ , $E_3$ and $E_4$ are given by the following.
\begin{eqnarray*}
E_1 
& = & \ds\sum_{T+1  \leq i,i' \leq p }  \underset{1 \leq k \neq l \leq n}{\ds\sum } ~ \underset{1 \leq k' \neq l' \leq n}{\ds\sum } ~ \sum_{ 1 \leq j, j_1 \leq T} \sum_{ 1 \leq j', j'_1 \leq T-1}  \sum_{ h = i+1 }^{ ( i+j-1) \wedge (i+ j_1-1)}   \sum_{ h' = i'+1 }^{ ( i'+j'-1) \wedge (i'+ j'_1-1)} \nonumber \\
&&    w_j^{*} w_{j_1}^{*}  w_{j'}^{*} w_{j'_1}^{*} \, \mathbb{E}(X_{k, i-j}X_{k, i-j_1} X_{l, h-j} X_{l, h-j_1} X_{k', i'-j'} X_{k', i'-j_1'}  X_{l', h'-j'} X_{l', h'-j_1'}) 
\end{eqnarray*}
Now we decompose  $E_1$  into  five sums that depends on the indices  $k,k',l$ and $l'$. We begin by the first case when $k=k'$ and $l=l'$,
\begin{eqnarray*}
E_{1,1} &:=&  \underset{1 \leq k \neq l \leq n}{\ds\sum } \left( \sum_{i=T+1}^p \Big( \, \sum_{j=1}^{T-1}w_j^{*4} \, 3 \cdot (3  (j-1) +  (j-1)(j-2) ) +  \sum_{ 1 \leq j \ne  j' \leq T}  w_j^{*2} w_{j'}^{*2} \, (j-1)(j'-1)  \right.\\
&& \left. \hspace{2cm}  + \, 2   \sum_{1 \leq j \ne j_1 \leq T-1} w_j^{*2} w_{j_1}^{*2}  \Big( (j-1) \wedge (j_1 -1) \Big)  \right. \\ 
&+& \left.  \ds\sum_{T+1  \leq i \ne i' \leq p } \Big( \, \sum_{j=1}^{T-1}w_j^{*4} \, (3  (j-1) +  (j-1)(j-2) )  +   \sum_{ 1 \leq j \ne  j' \leq T}  w_j^{*2} w_{j'}^{*2} \, (j-1)(j'-1)  \right) \\
&=& n(n-1) \, \left( 2 \sum_{i=T +1}^{p}  \sum_{j=1}^{T-1} w_j^{*4} (j-1)(j+1) + 2 \ds\sum_{T+1  \leq i , i' \leq p } \sum_{j=1}^{T-1} w_j^{*4} \, (j-1) \right. \\
&+ & \left. \ds\sum_{T+1  \leq i , i' \leq p } \sum_{ 1 \leq j ,  j' \leq T} w_j^{*2} w_{j'}^{*2} \,   (j-1)(j'-1) + 2 \sum_{i=T+1}^p \sum_{1 \leq j \ne j' \leq T-1}w_j^{*2} w_{j'}^{*2} \Big( (j-1) \wedge (j'-1) \Big)    \right)
\end{eqnarray*}
When $k=l'$ and $l=k'$, we have using similar arguments as previously that,
\begin{eqnarray*}
E_{1,2} &=& n(n-1)  \ds\sum_{T+1  \leq i , i' \leq p } \, \sum_{ 1 \leq j ,  j' \leq T} \, w_j^{*2} w_{j'}^{*2} \, (j-1)(j'-1)
\end{eqnarray*}
We move to the term, when $k=k'$ and $l \ne l'$,
\begin{eqnarray*}
E_{1,3} &:=& \ds\sum_{ \substack{1 \leq k , l, l' \leq n \\ k \ne l,l' , l \ne l'}} \Big\{   \sum_{j=1}^{T-1} w_j^{*4}  \Big( \sum_{i=T+1}^p  3 \, (j-1)^2 + \ds\sum_{T+1  \leq i \ne i' \leq p } (j-1)^2 \Big) \nonumber \\
&+&   \sum_{ 1 \leq j \ne j' \leq T} w_j^{*2} w_{j'}^{*2} \ds\sum_{T+1  \leq i ,i' \leq p }   (j-1)(j'-1) \, \Big\} \\
&=& n(n-1)(n-2) \,  \Big( 2\sum_{i=T+1}^p   \sum_{j=1}^{T-1} w_j^{*4} (j-1)^2 +   \ds\sum_{T+1  \leq i , i' \leq p } \, \sum_{ 1 \leq j , j' \leq T-1} w_j^{*2} w_{j'}^{*2} (j-1)(j'-1) \Big)
\end{eqnarray*}
Now we treat the case when , $k \ne k'$ and $l=l'$,
\begin{eqnarray*}
E_{1,4} &:=& \ds\sum_{ \substack{1 \leq k , k', l \leq n \\ l \ne k, k' ,  k \ne k'}} \Big\{\sum_{j=1}^{T-1} w_j^{*4} \ds\sum_{T+1  \leq i, i' \leq p }(3 (j-1) +(j-1)(j-2) ) \nonumber \\
& + & \sum_{ 1 \leq j \ne j' \leq T} \ds\sum_{T+1  \leq i , i' \leq p } w_j^{*2} w_{j'}^{*2} \cdot (j-1)(j'-1)  \, \Big\} \\
&=& n(n-1)(n-2) \ds\sum_{T+1  \leq i,i' \leq p }  \,  \Big\{ \sum_{j=1}^{T-1} w_j^{*4} \, (j-1)(j+1) + \sum_{ 1 \leq j \ne j' \leq T} w_j^{*2} w_{j'}^{*2} \cdot (j-1)(j'-1) \, \Big\} \\
&=& n(n-1)(n-2) \ds\sum_{T+1  \leq i,i' \leq p } \Big( \sum_{ 1 \leq j , j' \leq T-1} w_j^{*2} w_{j'}^{*2} \, (j-1)(j'-1) + 2\sum_{j=1}^{T-1} w_j^{*4} \, (j-1) \, \Big)
\end{eqnarray*}
Finally, we treat the term for $k \ne k'$ and $l \ne l'$,
\begin{eqnarray*}
E_{1,5} &:=& \underset{ k \ne k' , l \ne l'}{\underset{1 \leq k \neq l \leq n}{\ds\sum } ~ \underset{1 \leq k' \neq l' \leq n}{\ds\sum } } \ds\sum_{T+1  \leq i,i' \leq p }  \sum_{ 1 \leq j, j' \leq T}w_j^{*2} w_{j'}^{*2} \, (j-1)(j'-1) \nonumber \\
&= & n(n-1)^2(n-2) \ds\sum_{T+1  \leq i,i' \leq p } \sum_{ 1 \leq j, j' \leq T}w_j^{*2} w_{j'}^{*2} \, (j-1)(j'-1) 
\end{eqnarray*} 
We group the previous result to get,
\begin{eqnarray*}
E_1 \!\! &=&  \! \! \Big( 2n(n-1) + 2n(n-1)(n-2) + n(n-1)^2(n-2) \Big) \ds\sum_{T+1  \leq i, i' \leq p }  \sum_{ 1 \leq j, j' \leq T}w_j^{*2} w_{j'}^{*2} \, (j-1)(j'-1) \nonumber \\
& +& R_1(n, p,T)
\end{eqnarray*}
where,
\begin{eqnarray*} 
&&R_1(n ,p,T) \\ &= &  2\Big(n(n-1) + n(n-1)(n-2) \Big) \ds\sum_{T+1  \leq i,i' \leq p } \sum_{j=1}^{T-1} w_j^{*4}(j-1) + 2 n(n-1) \sum_{i=T +1}^{p}  \sum_{j=1}^{T-1} w_j^{*4} (j-1)(j+1)  \\
&+&    2 n(n-1)(n-2) \sum_{i=T+1}^p   \sum_{j=1}^{T-1} w_j^{*4} (j-1)^2  + 2n(n-1) \sum_{i=T+1}^p \sum_{1 \leq j \ne j' \leq T-1}w_j^{*2} w_{j'}^{*2} \Big( (j-1) \wedge (j'-1) \Big) \\
&=& o((c(n,p,T)^{-4})
\end{eqnarray*}
Now, let us bound from above the term $E_2$ in \eqref{square}:
\begin{eqnarray*}
 E_2 &:=& (n-1) \, \ds\sum_{T+1  \leq i,i' \leq p } \sum_{1 \leq k \ne l \leq n}\sum_{ 1 \leq j, j_1 \leq T} \sum_{k'=1}^n \sum_{j'=1}^T  w_j^{*} w_{j_1}^{*} w_{j'}^{*2} \mathbb{E}( X_{k, i-j} X_{k, i-j_1}  X_{k', i'-j'}^2 ) (p-i'-j'+1) \\
 && \cdot \sum_{  h=i+1}^{ ( i+j-1) \wedge (i+ j_1-1)}  \mathbb{E}( X_{l, h-j} X_{l, h-j_1}) 
 \end{eqnarray*}
We treat the two cases $k=k'$ and $k \ne k'$ each one apart. We begin by the case when $k \ne k'$,
 \begin{eqnarray*}
 E_{2,1} &:=&  n(n-1)^2 \,  \Big\{ \sum_{j=1}^{T-1} w_j^{*4} \Big( \sum_{i =T+1}^p \cdot 3 ( p-i-j+1)(j-1) + \ds\sum_{T+1  \leq i \ne i' \leq p }   ( p-i'-j+1)(j-1) \Big) \\
 &+&   \sum_{ 1 \leq j \ne j' \leq T} w_j^{*2} w_{j'}^{*2} \Big( \sum_{i=T+1}^p ( p-i-j'+1)(j-1) + \ds\sum_{T+1  \leq i \ne i' \leq p } (p-i'-j'+1)(j-1) \Big)  \\
 &= & n(n-1)^2 \, \Big\{\,  2 \sum_{i =T+1}^p   \sum_{j=1}^{T-1} w_j^{*4} \,( p-i-j+1)(j-1) \\
 &+&
 \ds\sum_{T+1  \leq i , i' \leq p } \sum_{ 1 \leq j , j' \leq T-1} w_j^{*2} w_{j'}^{*2} (p-i'-j'+1)(j-1) \, \Big\}  
\end{eqnarray*}
When $k \ne k'$,
\begin{eqnarray*}
E_{2,2} &:=& n(n-1)^3 \ds\sum_{T+1  \leq i,i' \leq p } \sum_{ 1 \leq j, j' \leq T}w_j^{*2} w_{j'}^{*2}   (p-i'-j'+1)  (j-1) .
\end{eqnarray*}
As consequence
\begin{eqnarray*}
E_2 &=&  \Big( n(n-1)^2 + n(n-1)^3 \Big) \ds\sum_{T+1  \leq i , i' \leq p } \sum_{ 1 \leq j , j' \leq T-1} w_j^{*2} w_{j'}^{*2} (p-i'-j'+1)(j-1) + o((c(n,p,T)^{-4}).
\end{eqnarray*}
Similarly we get, 
\begin{eqnarray*}
E_3 &=& (n-1) \, \ds\sum_{T+1  \leq i,i' \leq p } \sum_{1 \leq k' \ne l' \leq n}\sum_{ 1 \leq j', j'_1 \leq T-1} \sum_{k=1}^n \sum_{j=1}^{T-1}  w_{j'}^{*} w_{j'_1}^{*} w_{j}^{*2} \mathbb{E}( X_{k', i'-j'} X_{k', i'-j'_1}  X_{k, i-j}^2 ) (p-i-j+1) \\
 && \cdot \sum_{  h'=i'+1}^{ ( i'+j'-1) \wedge (i'+ j'_1-1)}  \mathbb{E}( X_{l', h'-j'} X_{l', h'-j'_1}) \\
 &=& n^2 (n-1)^2 \ds\sum_{T+1  \leq i , i' \leq p } \sum_{ 1 \leq j , j' \leq T-1} w_j^{*2} w_{j'}^{*2} (p-i-j+1)(j'-1) \\
 &  +&   2 n(n-1)^2  \sum_{i =T+1}^p \sum_{j=1}^{T-1} w_j^{*4} \,( p-i-j+1)(j-1) 
\end{eqnarray*}
The term $E_4$ of \eqref{square}  is treated as follows,
\begin{eqnarray*}
E_4 &:=& (n-1)^2   \ds\sum_{T+1  \leq i,i' \leq p } \sum_{1 \leq k, k' \leq n} \sum_{ 1 \leq j, j' \leq T} w_j^{*2} w_{j'}^{*2} \mathbb{E}(X_{k,i-j}^2 X_{k', i'-j'}^2) (p-i-j+1)(p-i'-j'+1) \\
&=& n (n-1)^2 \, \Big\{  \sum_{i=T+1}^p \Big( \sum_{ 1 \leq j \leq T} w_j^{*4}  3 (p-i-j+1)^2 +  \sum_{ 1 \leq j \ne j' \leq T}   w_j^{*2} w_{j'}^{*2}  (p-i-j+1)(p-i-j'+1) \Big) \nonumber \\
&& \hspace{1cm} + \ds\sum_{T+1  \leq i \ne i' \leq p }  \sum_{ 1 \leq j , j' \leq T-1}   w_j^{*2} w_{j'}^{*2}   (p-i-j+1) (p-i'-j'+1)  \, \Big\} \\
&+&n (n-1)^3 \ds\sum_{T+1  \leq i  , i' \leq p }  \sum_{ 1 \leq j , j' \leq T-1}  w_j^{*2} w_{j'}^{*2}  (p-i-j+1) (p-i'-j'+1) \Big\} \nonumber \\
&=& n^2(n-1)^2 \!\!\! \ds\sum_{T+1  \leq i , i' \leq p }  \sum_{ 1 \leq j , j' \leq T-1}  \!\!\! w_j^{*2} w_{j'}^{*2}  (p-i-j+1) (p-i'-j'+1) \\
&& +   2 n (n-1)^2  \sum_{i=T+1}^p  \sum_{ 1 \leq j \leq T} w_j^{*4}   (p-i-j+1)^2
\end{eqnarray*}
 Finally we group all the previous terms and  obtain,
\begin{eqnarray*}
\mathbb{E}\Big(\ds\sum_{i=T+1}^p \mathbb{E}_{i-1}(D_{n,p, i}^2 ) \Big)^2 &= & c^4(n,p,T) \, \Big\{ \cdot n^2 (n-1)^2 \ds\sum_{T+1  \leq i , i' \leq p }  \sum_{ 1 \leq j , j' \leq T-1}   w_j^{*2} w_{j'}^{*2}  \Big( (j-1)(j'-1)\\
&+&  (p-i'-j'+1)(j-1) 
+ (p-i-j+1)(j'-1)    \\ 
&+&  (p-i-j+1) (p-i'-j'+1) \Big) + o( (c(n ,p,T))^{-4}) \,  \Big\} \\
&=& \ds\frac{16}{(p-T)^2(p-T-1)^2}  \sum_{ 1 \leq j , j' \leq T-1}   w_j^{*2} w_{j'}^{*2} \ds\sum_{T+1  \leq i , i' \leq p }(p-i)(p-i') ) +o(1). \\
&=& \ds\frac{16}{(p-T)^2(p-T-1)^2} \cdot \frac 14 \cdot \Big( \ds\frac{(p-T-1)(p-T)}2 \Big)^2 +o(1) =1 +o(1)
\end{eqnarray*} 
To achieve the proof, we show that the second condition given in \eqref{conditions} is also verified. Indeed, 
\begin{eqnarray*}
 \sum_{i=T+1}^p \mathbb{E}(D_{n,p, i}^4 )
 &=&  (c(n,p,T))^4   \sum_{i=T+1}^p ~ \sum_{i+1 \leq h_1,h_2,h_3,h_4 \leq p } ~ \underset{1 \leq k_1 \neq l_1 \leq n}{\ds\sum } ~ \underset{1 \leq k_2 \neq l_2 \leq n}{\ds\sum } ~ \underset{1 \leq k_3 \neq l_3 \leq n}{\ds\sum } ~ \underset{1 \leq k_4 \neq l_4 \leq n}{\ds\sum } ~
\\
 && \hspace{-1cm}  \sum_{1 \leq j_1, j_2, j_3, j_4 \leq T-1}  w_{j_1}^*  w_{j_2}^*  w_{j_3}^*  w_{j_4}^* \mathbb{E}( X_{k_1, i}X_{k_2, i}  X_{k_3, i} X_{k_4, i} X_{l_1, h_1-j_1} X_{l_2, h_2-j_2} X_{l_3, h_3-j_3} X_{l_4, h_4-j_4}) \\
 &&\hspace{2cm} \cdot  \mathbb{E}(X_{l_1, h_1}  X_{l_2, h_2} X_{l_3, h_3} X_{l_4, h_4}X_{k_1, i-j_1}  X_{k_2, i-j_2} X_{k_3, i-j_3} X_{k_4, i-j_4}) \\
 &=& O(1) \cdot (c(n,p,T))^4  \cdot \sum_{i=T+2}^p ~ \sum_{T+1 \leq h_1,h_2 \leq p } ~ \underset{1 \leq k_1 \neq l_1 \leq n}{\ds\sum } ~ \underset{1 \leq k_2 \neq l_2 \leq n}{\ds\sum } ~
 \sum_{1 \leq j_1, j_2 \leq T}  w_{j_1}^{*2}  w_{j_2}^{*2} \\
 &=& \frac{O(1)}{(p-T)^2(p-T-1)^2} \cdot (p-T)^3 = o(1).
\end{eqnarray*}
\end{proof}

\subsection{Proofs of results in Section\ref{sec:testprob2}}

\begin{proof}[Proof of Proposition  \ref{prop:est2}]
To show the upper bound for  the variance of $\widehat{\mathcal{A}}_n^{\mathcal{E}}$, we follow the line of proof of Proposition \ref{prop:esttoeplitz}. We use that $\sum_{j \geq 1} 1/(e^{nAj}) = 1/(e^{nA} -1)$ for all $A > 0$ and $n$ finite integer.    As an example, let us bound from above one term of the variance of  $\widehat{\mathcal{A}}_n^{\mathcal{E}}$ :
\begin{eqnarray}
 R_{1,2,2} &:=&  \underset{1 \leq j \neq j' \leq T}{\ds\sum } w_j^* w_{j'}^* \Big( |\sigma_{|j'-j| }| + \ds\sum_{ \substack{r=-p+ T +1 \\ r \neq 0}}^{p-(T +1)}  |\sigma_{|r|}\sigma_{|r-j+j'|}| \Big)^2 \nonumber \\
& \leq &  2 \underset{1 \leq j \neq j' \leq T}{\ds\sum } w_j^* w_{j'}^*
 \sigma_{|j'-j| }^2 + 4\underset{1 \leq j \neq j' \leq T }{\ds\sum } w_j^* w_{j'}^* (\ds\sum_{r=1}^{p-(T+1)}  \sigma_{r}^2) (\ds\sum_{ \substack{r=-p+ T+1 \\ r \neq 0}}^{p-(T+1)} \sigma_{|r-j+j'|}^2 )  \nonumber \\
  & \leq & 2 \underset{\underset{|j'-j| < j}{1 \leq j \neq j' \leq T }}{\ds\sum } w^*_{j'} w^*_{|j'-j|}  \sigma_{|j'-j| }^2  + 2 \underset{\underset{|j'-j| > j}{1 \leq j \neq j' \leq T}}{\ds\sum } w_j^*  w^*_{j'} \frac{e^{2A|j'-j|}}{e^{2Aj}} \sigma_{|j'-j| }^2 +4 \underset{1 \leq j \neq j' \leq T}{ \ds\sum}   \Big( \ds\sum_{r=1}^{j}   w^*r \sigma_{r}^2 \nonumber \\
 & + &   w_j^*  \sum_{r=j +1 }^{p-(T+1)}  \ds\frac{e^{2Ar}}{e^{2Aj}}\sigma_{r}^2 \Big)   \Big(\ds\sum_{\substack{r=-p+T+1 \\ |r-j+j'| < j' }}^{p-(T+1)} \hspace{-0.2cm}  w^*_{|r-j+j'|} \sigma_{|r-j+j'|}^2 + w^*_{j'} \hspace{-0.2cm} \ds\sum_{\substack{r=-p+T+1 \\ |r-j+j'| \geq j'}}^{p-(T+1)} \ds\frac{e^{2A|r-j+j'|}}{e^{2Aj'}} \sigma_{|r-j+j'|}^2 \Big) \nonumber \\
  &\leq &  4 \cdot (\sup\limits_{j} w_j^*) \cdot T \cdot  \mathbb{E}_\Sigma(\widehat{\mathcal{A}}_n^{\mathcal{E}}) + 4L \cdot (\sup\limits_{j} w_j^*)^2 \cdot (1/(e^{2A}-1)) + O(T^2) \cdot  \mathbb{E}_{\Sigma}^2(\widehat{\mathcal{A}}_n^{\mathcal{E}}) \nonumber   \\
  &+ & \!\!\! 16L \cdot (\sup\limits_j w_j^*)  \cdot T   \cdot (1/(e^{2A}-1))   \cdot  \mathbb{E}_\Sigma(\widehat{\mathcal{A}}_n^{\mathcal{E}})  +   16L^2 \cdot (\sup\limits_j w_j^*)^2 \cdot (1/(e^{2A}-1)) ^2 . \nonumber
\end{eqnarray}
The proof of the asymptotic normality of $n(p- T)(\widehat{\mathcal{A}}_n^{\mathcal{E}} - \mathbb{E}_{\Sigma}(\widehat{\mathcal{A}}_n^{\mathcal{E}}))$, when $n(p- T) b(\psi) \asymp 1$ and for $\Sigma \in G(\mathcal{E}(A,L) \, , \psi)$ such that $\mathbb{E}_{\Sigma}(\widehat{\mathcal{A}}_n^{\mathcal{E}}) = O(b(\psi))$,  is also due to Theorem 1 of  \cite{Hall84}. That is, we have to check (\ref{conditionNA}) as in Proposition \ref{prop:asymptoticnormality}. As an example, let us bound from above the term $\mathcal{G}_2$ in \eqref{G2} with the  parameters given in \eqref{parameters}:
\begin{eqnarray}
 \mathcal{G}_2 & := &  4 \underset{1 \leq j_1 \neq j_2 < T}{\ds\sum }   w_{j_1}^{*2} w_{j_2}^{*2}   \underset{-p +T +1  \leq r_1, r_2, r_3, r_4 \leq p-(T+1)}{\sum}| \sigma_{|r_1|}\sigma_{|r_1- j_1 + j_2|} \sigma_{|r_2|}\sigma_{|r_2- j_2 + j_1|}| \nonumber \\
&& \hspace{7cm} \cdot  |\sigma_{|r_3|}\sigma_{|r_3- j_1 + j_2|} \sigma_{|r_4|}\sigma_{|r_4- j_2 + j_1|} |\nonumber \\[0.5 cm]
& \leq &  4 \underset{1 \leq j_1 \neq j_2 < T}{\ds\sum }     w_{j_1}^{*2} w_{j_2}^{*2}  ( \sum_{r_1} \sigma_{|r_1|}^2)^{2} ( \sum_{r_1} \sigma_{|r_1 - j_1 + j_2|}^2) ( \sum_{r_2} \sigma_{|r_2 - j_2  +j_1|}^2) \nonumber \\
& \leq & 16L^2 \underset{1 \leq j_1 \neq j_2 < T}{\ds\sum }     w_{j_1}^{*2} w_{j_2}^{*2}  ( \sum_{ \substack{r_1 \\ |r_1| \leq j_1 }} \sigma_{|r_1|}^2 + \sum_{ \substack{r_1 \\ |r_1| > j_1 }} \sigma_{|r_1|}^2 )^{2} \nonumber \\
& \leq &  16L^2 \Big\{ \underset{1 \leq j_1 \neq j_2 <  T}{\ds\sum }    w_{j_2}^{*2} ( \sum_{ \substack{r_1 \\ |r_1| \leq j_1 }} w_{|r_1|}^* \sigma_{|r_1|}^2)^2 +   \underset{1 \leq j_1 \neq j_2 < T}{\ds\sum }    w_{j_1}^{*2} w_{j_2}^{*2} (\sum_{ \substack{r_1 \\ |r_1| > j_1 }} \frac{e^{2Ar_1}}{e^{2Aj_1}} \sigma_{|r_1|}^2 )^{2} \Big \} \nonumber \\
& \leq & 16 L^2 \Big\{ \sum_{j_1}(\sum_{j_2} w_{j_2}^{*2} ) \cdot \mathbb{E}^2_{\Sigma}(\widehat{\mathcal{A}}_n)  + 4L^2  (\sum_{j_2}  w_{j_2}^*) \cdot (\sum_{j_1} w_{j_1}^{*2} \ds\frac{1}{e^{2Aj}}) \} \nonumber \\
& \leq & 16 L^2 \Big\{ \frac{ T}{2}   \cdot  \mathbb{E}^2_{\Sigma}(\widehat{\mathcal{A}}_n^{\mathcal{E}}) + 4L^2\cdot \frac{1}{2} \cdot (\sup\limits_{j} w_j^{*2})  \cdot \frac{1}{e^{2A}-1} \Big\} \nonumber \\
& \leq & \mathbb{E}^2_{\Sigma}(\widehat{\mathcal{A}}_n^{\mathcal{E}}) \cdot O( T) + o(1) = O(\frac{T}{n^2(p- T)^2}) + o(1) = o(1).
\end{eqnarray}

\hfill \end{proof}

\begin{proof}[Proof of Theorem \ref{theoremanalyticaltern}]
To show the upper bound, we use first the asymptotic normality of the $n(p- T) \widehat{\mathcal{A}}_n ^{\mathcal{E}}$ under $H_0$ to prove that the type I error probability of $ \Delta^*$ : $\eta(\Delta^*) = 1 - \Phi( np b(\psi)) +o(1)$.

To bound from above the type II error probability, we shall distinguish 2 cases. First, when $n^2p^2 b^2(\psi) \to + \infty$, we use the Markov inequality,  \eqref{R'_1} and  \eqref{R'_2}, to show that $ \beta( \Delta^*, G(\psi)) \to 0$. Then, when  $n^2p^2 b^2(\psi) \asymp 1$, we have two possibilities: either $\mathbb{E}_\Sigma(\widehat{\mathcal{A}}_n^{\mathcal{E}}) / b(\psi) \to \infty$, or $\mathbb{E}_\Sigma(\widehat{\mathcal{A}}_n^{\mathcal{E}}) = O(b(\psi))$. We show respectively that either type II error probability tends to zero, or we use the asymptotic normality of $n(p- T) (\widehat{\mathcal{A}}_n^{\mathcal{E}} - \mathbb{E}_{\Sigma}(\widehat{\mathcal{A} }_n^{\mathcal{E}}))$ to get that $\beta( \Delta^*, G(\psi))  \leq \Phi (np(t- b(\psi)) + o(1).$

To show the lower bound, we follow the same sketch of proof of lower bounds of Theorems \ref{theo:optimalrates} and \ref{theo:sharprates}. The key point for ellipsoids $\mathcal{E}(A,L)$ is to check the positivity of the matrix
\[
\Sigma^* = T_P( \{ \sigma_j^* \}_{j \geq 1}) \quad \text{ where }  \quad \sigma_j^* =   \ds\sqrt{\lambda} \Big( 1 - (\frac{e^{j}}{e^{T}})^{2A} \Big)_+^{1/2} \quad \text{ for all }
j \geq 1.
\]
Then we create a parametric family of matrices by changing the sign randomly on each diagonal of $\Sigma^*$, with parameters given in \eqref{parameters}.
\begin{lemma}
\label{lem1}
 For $A>0$, the symmetric Toeplitz matrix $\Sigma^*_U= T_p( \{u_j \sigma^*_{j} \}_{j \geq 1} )$, where $U = \{u_j \}_{j \geq 0 }$ with $u_0 =1$, $u_j = \pm 1$ for all $j \geq 1$, and $\sigma_{j}^* $  defined as previously,  is positive definite, for $\psi>0$ small enough.
\noindent Moreover,  denote by $\lambda^*_{1,U}, ..., \lambda^*_{p,U}$ the eigenvalues of $\Sigma^*_U$, then
$|\lambda^*_{i,U}-1| \leq O(\psi \cdot \ds\sqrt{\ln (1/ \psi)} )$, for all $i$ from 1 to $p$.
\end{lemma}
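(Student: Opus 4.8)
The plan is to write $\Sigma^*_U = I_p + \Delta_U$, where $\Delta_U = T_p(\{u_j\sigma^*_j\}_{j\geq 1}) - I_p$ is the symmetric Toeplitz matrix with null diagonal and off-diagonal entries $(\Delta_U)_{ik} = u_{|i-k|}\sigma^*_{|i-k|}$. Since $I_p$ commutes with $\Delta_U$, the eigenvalues of $\Sigma^*_U$ are exactly $1+\mu$, where $\mu$ ranges over the eigenvalues of $\Delta_U$. Hence both assertions reduce to a single bound on the spectral norm $\|\Delta_U\|$: if $\|\Delta_U\| \leq O(\psi\sqrt{\ln(1/\psi)})$, then $|\lambda^*_{i,U}-1| \leq \|\Delta_U\|$ gives the eigenvalue estimate, and for $\psi$ small enough $\|\Delta_U\|<1$ forces every eigenvalue of $\Sigma^*_U$ to be strictly positive, i.e. positive definiteness. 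The crucial feature is that the norm bound I seek must be uniform over the whole sign family $\mathcal{U}$.

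First I would bound $\|\Delta_U\|$ by a quantity free of $U$. As $\Delta_U$ is symmetric, $\|\Delta_U\|$ equals its largest eigenvalue in absolute value, which I control by Gershgorin's theorem. Each row of $\Delta_U$ has zero diagonal, and for a fixed row the off-diagonal absolute sum counts each distance $d=|i-k|\geq 1$ at most twice, so it is at most $2\sum_{k=1}^{T-1}|u_k\sigma^*_k| = 2\sum_{k=1}^{T-1}\sigma^*_k$, where I used $|u_k|=1$ and $\sigma^*_k=0$ for $k\geq T$ (for $j\geq T$ one has $(e^{j}/e^{T})^{2A}\geq 1$, so the positive part vanishes). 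This step already discards all dependence on the signs, so the resulting bound holds simultaneously for every $U\in\mathcal{U}$.

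Then I would substitute the explicit parameters from (\ref{parameters}). Using $\sigma^*_k\leq\sqrt\lambda$ and that there are fewer than $T$ nonzero terms,
\[
\|\Delta_U\| \leq 2\sum_{k=1}^{T-1}\sigma^*_k \leq 2T\sqrt\lambda \leq \frac{2}{A}\ln\Big(\frac1\psi\Big)\cdot\sqrt{\frac{A\psi^2}{\ln(1/\psi)}} = \frac{2}{\sqrt A}\,\psi\,\sqrt{\ln\Big(\frac1\psi\Big)},
\]
which is exactly of order $O(\psi\sqrt{\ln(1/\psi)})$ and tends to $0$ as $\psi\to 0$. Combined with the first paragraph, this yields $|\lambda^*_{i,U}-1| \leq O(\psi\sqrt{\ln(1/\psi)})$ for all $i$, and positive definiteness once $\psi$ is small enough that the right-hand side is below $1$.

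I do not expect a genuine obstacle here: the only point to watch is that the estimate be uniform in $U$, and this is automatic because the triangle inequality removes the signs before any estimation. A marginally sharper variant replaces the Gershgorin row-sum bound by the $L_\infty$ norm of the symbol $h_U(\theta)=2\sum_{k=1}^{T-1}u_k\sigma^*_k\cos(k\theta)$ of $\Delta_U$, via $\|\Delta_U\|\leq\|h_U\|_\infty\leq 2\sum_{k=1}^{T-1}\sigma^*_k$; this gives the same order and is equally uniform over $\mathcal{U}$, so either route completes the proof.
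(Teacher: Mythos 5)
Your proof is correct and follows essentially the same route as the paper: both arguments apply Gershgorin's theorem, use the triangle inequality to eliminate the signs $u_j$ uniformly over $\mathcal{U}$, bound the off-diagonal row sum by $2\sum_{j=1}^{T-1}\sigma^*_j \leq 2T\sqrt{\lambda} \asymp \psi\sqrt{\ln(1/\psi)}$ using the parameters in (\ref{parameters}), and conclude both the eigenvalue localization and positive definiteness for small $\psi$. The only cosmetic difference is that you apply Gershgorin to $\Delta_U = \Sigma^*_U - I$ and shift by the identity, whereas the paper applies it directly to $\Sigma^*_U$ with center $\sigma^*_0 = 1$; these are the same computation.
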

\begin{proof}[Proof of Lemma \ref{lem1}  ]
Using Gershgorin's Theorem we get that each eigenvalue of $ \Sigma^*_U
 = T_p( \{u_j \sigma^*_{j} \}_{j \geq 1} )$ verifies,  $ | \lambda_{i,U}^* - u_0\sigma_0^*| \leq  2 \ds\sum_{\substack{j \geq 1 }}^p |u_{j} \sigma^*_{j}| = 2 \ds\sum_{ j \geq 1  } \sigma^*_{j}$.
We have,
\[
\begin{array}{lcl}
\ds\sum_{j \geq 1} \sigma^*_{j} &= &  \ds\sqrt{\lambda} \ds\sum_{j \geq 1} \Big( 1 - (\frac{e^{j}}{e^{T}})^{2A} \Big)_+^{1/2}  \leq  \sqrt{\lambda} \sum_{j=1}^{T} \Big( 1 - (\frac{e^{j}}{e^{T}})^{2A} \Big)^{\frac{1}{2}} \\[0.5 cm]
& =& O(1) \sqrt{\lambda } \cdot T \asymp \psi \cdot \ds\sqrt{\ln (1/ \psi)}.
\end{array}
\]
We deduce that the smallest eigenvalue is bounded from below by
$$
\min_{i=1,...,p}\lambda^*_{i,U} \geq  \sigma^*_{0} - 2\ds\sum_{j \geq 1} \sigma^*_{j}   \geq 1 - O(1)  \psi \cdot \ds\sqrt{\ln (1/ \psi)}.
$$
which is strictly positive for $\psi >0$ small enough.
\end{proof}

To complete the proof, we follow the steps of the proof of the lower bound in  Section \ref{theoandproof}.
\end{proof}

\end{document}